\documentclass[11pt]{article}

\usepackage{amsmath, amssymb, amsthm, bm, graphicx, xcolor, physics, newtxtext, newtxmath, tikz, eso-pic, enumitem, float}

\usepackage{geometry}
\geometry{letterpaper, top=90pt, bottom=72pt, left=54pt, right=54pt}

\usepackage{hyperref}
\hypersetup{breaklinks, colorlinks=true, linkcolor=blue, urlcolor=cyan, citecolor=magenta}

\usepackage[lastpage, user]{zref}
\usepackage{fancyhdr}
\pagestyle{fancy}
\fancypagestyle{plain}{}

\fancyhead{}
\cfoot{\fontsize{9pt}{0pt}\selectfont\thepage/\zpageref{LastPage}}

\usepackage[numbered, framed]{matlab-prettifier}
\lstset{style=Matlab-editor, basicstyle=\mlttfamily\small, mlshowsectionrules=true}

\numberwithin{equation}{section}

\theoremstyle{plain}
\newtheorem{theorem}{Theorem}[section]

\theoremstyle{definition}
\newtheorem{definition}[theorem]{Definition}
\newtheorem{proposition}[theorem]{Proposition}
\newtheorem{remark}[theorem]{Remark}
\newtheorem{corollary}[theorem]{Corollary}
\newtheorem{lemma}[theorem]{Lemma}

\newtheorem{conjecture}[theorem]{Conjecture}

\renewenvironment{proof}{{\noindent \bf Proof.}}{\qed}

\newcommand*\CircleAroundChar[2][\small]{\tikz[baseline=(char.base)]{\node[shape=circle, draw, inner sep=1pt](char){#1#2};}}

\makeatletter
\newcommand{\newparallel}{\mathrel{\mathpalette\new@parallel\relax}}
\newcommand{\new@parallel}[2]{%
  \begingroup
  \sbox\z@{$#1T$}
  \resizebox{!}{\ht\z@}{$\m@th#1/\mkern-5mu/$}%
  \endgroup
}
\makeatother

\usepackage{xstring,catoptions} 


\newcommand{\monomer}[2][]
{\ifx&#1&%
{$#2$}%
\else
{\color{#1}$#2$}%
\fi
}

\newcommand{\quark}[1]{\raisebox{-5pt}{\tikz\draw ellipse(0.3 and 0.21)node{#1};}}

\definecolor{blue A}{RGB}{0,102,255}
\definecolor{red B}{RGB}{220,20,60}
\definecolor{yellow C}{RGB}{255,140,0}

\definecolor{BW black}{RGB}{0,0,0}
\definecolor{BW orange}{RGB}{230,159,0}
\definecolor{BW sky blue}{RGB}{86,180,233}
\definecolor{BW bluish green}{RGB}{0,158,115}
\definecolor{BW yellow}{RGB}{240,228,66}
\definecolor{BW blue}{RGB}{0,114,178}
\definecolor{BW vermillion}{RGB}{213,94,0}
\definecolor{BW reddish purple}{RGB}{204,121,167}

\definecolor{spring}{rgb}{0.7, 1.0, 0.2}
\definecolor{cyan}{rgb}{0.6, 1.0, 1.0}
\definecolor{purple}{rgb}{0.9, 0.1, 1.0}

\usepackage{fancyhdr}

\pagestyle{fancy}
\chead{Ternary Ohta\textendash Kawasaki Free Energy}

\begin{document}

\author{Zirui Xu\thanks{Department of Applied Physics and Applied
Mathematics, Columbia University, New York, NY
10027, USA. Email: zx2250@columbia.edu.} \;(corresponding author)
 \quad and \quad Qiang Du\thanks{Department of Applied Physics and Applied
Mathematics, and Data Science Institute,
Columbia University, New York, NY
10027, USA. Email: qd2125@columbia.edu.}
}

\title{On the Ternary Ohta\textendash Kawasaki Free Energy and Its One-dimensional Global Minimizers\thanks{This work is supported in part by
the National Science Foundation DMS-2012562 and DMS-1937254.
}} 

\date{}

\maketitle

\section*{Abstract}

\thispagestyle{empty}

We study the ternary Ohta\textendash Kawasaki free energy that has been used to model triblock copolymer systems. Its one-dimensional global minimizers are conjectured to have cyclic patterns. However, some physical experiments and computer simulations found triblock copolymers forming noncyclic lamellar patterns. In this work, by comparing the free energies of the cyclic pattern and some noncyclic candidates, we show that the conjecture does not hold for some choices of parameters. Our results suggest that even in one dimension, the global minimizers may take on very different patterns in different parameter regimes. To unify the existing choices of the long range coefficient matrix, we present a reformulation of the long range term using a generalized charge interpretation, and thereby propose conditions on the matrix in order for the global minimizers to reproduce physically relevant nanostructures of block copolymers.

\section{Introduction}
\label{introduction}

For the past few decades, block copolymers have attracted broad attention because of their ability to self-assemble into various fine mesoscopic structures. Those nanostructures can be used as scaffolds in a wide range of applications \cite{mai2012self,chang2020abcs,reddy2021block}. A number of approaches have been developed to model the self-assembling behaviors of block copolymers, such as molecular dynamics \cite{lyubimov2018molecular}, dissipative particle dynamics \cite{huang2019dissipative}, self-consistent field theory \cite{sides2003parallel}, etc. As an approximation to the self-consistent field theory, Ohta\textendash Kawasaki (O\textendash K) free energy \cite{ohta1986equilibrium} is effective and capable of reproducing microphase separation found in block copolymers.

The ternary O\textendash K free energy \cite{ren2003triblock1} is a phase field model for triblock copolymers. A triblock copolymer molecule, denoted by \monomer{A}\monomer{B}\monomer{C}, is a linear chain obtained by joining three subchains of monomers of types \monomer{A}, \monomer{B} and \monomer{C}, respectively, via covalent bonds. Let $u_1$, $u_2$ and $u_3$ denote the density fields of monomers \monomer{A}, \monomer{B} and \monomer{C}, respectively, with $\omega_i$ being the spatial average of $u_i$ (i.e., the overall volume fraction) and satisfying $\sum_i\omega_i=1,\;\omega_i>0$, then according to Ren and Wei \cite[Equation (2.1)]{ren2003triblock2}, the free energy of the triblock copolymer system is
\begin{equation}
\label{definition of J_epsilon}
\begin{aligned}
J_\epsilon(\vec u) = &\sum_{i=1}^3\frac{\epsilon}{\omega_i}\int_\Omega\,\big|\nabla u_i(\vec x)\big|^2\dd{\vec x}+\int_\Omega\frac{W(\vec u)}\epsilon\dd{\vec x}+\null\\
&\sum_{i=1}^3\sum_{j=1}^3\gamma_{ij}\int_\Omega\int_\Omega\big(u_i(\vec x)\!-\!\omega_i\big)G(\vec x,\vec y)\big(u_j(\vec y)\!-\!\omega_j\big)\dd{\vec x}\dd{\vec y},
\end{aligned}
\end{equation}
where $\Omega$ is the entire domain under consideration, and $G$ is the Green's function of $-\Delta$ (the negative Laplacian) on $\Omega$, subject to suitable boundary conditions (for illustration, we focus on periodic boundary conditions). The parameter $\epsilon$ is proportional to the interfacial thickness, $\vec u$ denotes $[u_1,u_2,u_3]^{\rm T}$ subject to the incompressibility condition $u_1+u_2+u_3=1$, and $W$ is a triple-well potential whose three wells are $[1,0,0]^{\rm T}$, $[0,1,0]^{\rm T}$ and $[0,0,1]^{\rm T}$, corresponding to pure \monomer{A}, \monomer{B} and \monomer{C} domains, respectively. The matrix $[\gamma_{ij}]$ is given in \eqref{Ren's matrix}. Note that \eqref{definition of J_epsilon} was first derived by Nakazawa and Ohta \cite{nakazawa1993microphase} for the same triblock copolymer system, but with a different $[\gamma_{ij}]$, as discussed later.

In this work, we focus on the sharp interface limit (also known as the strong segregation limit \cite[Figure 3]{bates1990block}) of the 
ternary  O\textendash K model. In such a limit, different types of monomers are well separated by sharp interfaces, and the domains $\Omega_1,\,\Omega_2$ and $\Omega_3$ occupied by monomers \monomer{A}, \monomer{B} and \monomer{C}, respectively, partition the entire domain $\Omega$, with the volume constraint $|\Omega_i|=\omega_i|\Omega|$. For $\Omega=[0,1]$, \cite[Section 3]{ren2003triblock2} has established the $\Gamma$-convergence of $J_\epsilon$, as $\epsilon\rightarrow0$, to $J$ defined by
\begin{equation}
\label{definition of J}
\begin{aligned}
J\big(\{\Omega_i\}\big)=&\sum_{i=1}^2\sum_{j=i+1}^3 c_{ij}\vert\partial\Omega_i\cap\partial\Omega_j\vert+\null\\
&\sum_{i=1}^3\sum_{j=1}^3\gamma_{ij}\int_{\Omega}\int_{\Omega}\big(\bm1_{\Omega_i}(\vec x)\!-\!\omega_i\big)\,G(\vec x,\vec y)\,\big(\bm1_{\Omega_j}(\vec y)\!-\!\omega_j\big)\dd{\vec x}\dd{\vec y},
\end{aligned}
\end{equation}
where the first double summation, dubbed the short range term, is the weighted sum of interfacial sizes, with $c_{ij}$ being the interfacial tensions given by \cite[Definition 3.3]{ren2003triblock2} and satisfying triangle inequalities \cite[Equation (9)]{van2008copolymer}. The second one is dubbed the long range term, and $\bm1_{\Omega_i}$ is the indicator function of $\Omega_i$. For $\Omega=[0,1]$, the Green's function $G$ is given by \eqref{expression of G on 1-D periodic cell}. We expect such $\Gamma$-convergence to hold not only for $\Omega=[0,1]$, but also for bounded and smooth $\Omega$ in any dimension, in the light of the analogous results for binary systems (see Section \ref{background on binary systems}).

To date, mathematical studies on the minimizers of \eqref{definition of J} remain incomplete. In 1-D, Ren and Wei \cite{ren2003triblock2} found some local minimizers, but the global minimizers remain to be an open question. Later on, some stationary points were found in 2-D in the vanishing volume limit as $\omega_1,\omega_2\rightarrow0$, including clusters of tiny single bubbles \cite{ren2019stationary}, tiny double bubbles \cite{ren2015double}, and tiny core-shells \cite{ren2017stationary}, depending on the choices of $[\gamma_{ij}]$. Note that in those 2-D works, $[\gamma_{ij}]$ were chosen to be some general matrices instead of the matrix \eqref{Ren's matrix}. In the same vanishing volume limit, the global minimizers in 2-D were recently found \cite{alama2019periodic} to be clusters of tiny single and double bubbles for some choices of $[\gamma_{ij}]$. Without the vanishing volume assumption, the global minimizers were only found in 1-D for a degenerate case \eqref{Blend's matrix} \cite{van2008copolymer} (such a degenerate ternary system is used to model mixtures of diblock copolymers and homopolymers). The present study is the first attempt towards a systematic characterization of the global minimizers in non-degenerate cases for general compositions (i.e., volume fractions). We begin by numerically and exhaustively searching for the global minimizers in a 1-D periodic cell, which is computationally feasible when there are not many interfaces. Based on our numerical results, we select a number of representative lamellar candidates and analytically calculate their free energies. Given any choice of parameters, we choose the candidate that has the lowest free energy. By repeating this procedure for various choices of parameters, we construct plausible phase diagrams that may offer us a rough picture of the energy landscape. We note that some of our lamellar candidates have not been seen in the literature of triblock copolymers. It is unclear whether those candidates correspond to more complicated multiblock terpolymers (terpolymers are copolymers made up of three types monomers), or the parameters corresponding to those candidates in the phase diagrams are impractical at least in 1-D. Our findings thus indicate that the parameters in the O\textendash K model have to be carefully chosen if one wishes to model triblock copolymers. Although we focus on 1-D global minimizers, our results may also shed light on 2-D and 3-D cases, as discussed later in Section \ref{discussion}.

We note that the parameter $[\gamma_{ij}]$ affects the phase diagrams mentioned above. Unfortunately, there have been different choices of $[\gamma_{ij}]$ in the literature (see Section \ref{existing choices of [gamma_ij]} for more details). In the present work, instead of examining the derivation of $[\gamma_{ij}]$ from statistical physics, we present a reformulation of the long range term, which allows us to draw a mathematical analogy to charged immiscible fluids, with different types of charges uniformly distributed within different types of fluids. With the charge analogy, the meaning of each entry of $[\gamma_{ij}]$ then becomes clear: it describes the interaction between the type $i$ charge and the type $j$ charge. (Such an analogy is straightforward and customary for binary systems, but to our knowledge, there have not been thorough demonstrations for ternary systems. Note that "immiscible fluids" is a term we adopted from \cite{lawlor2014double}, and "charge" is a term we borrowed from the liquid drop model \cite{choksi2017old} to mimic similar characteristics. It might be more appropriate for us to use "phase charge" to avoid ambiguity, or to borrow "color charge" from the strong interaction context, but for brevity we use "charge" throughout this article.) We then impose some admissibility conditions on $[\gamma_{ij}]$ from a purely mathematical perspective, in order to ensure that the global minimizers of \eqref{definition of J} would be characterized by what we call "charge neutrality" (i.e., the homogeneous mixture of \monomer{A}, \monomer{B} and \monomer{C} in the ternary system), if we could neglect the interfacial energy introduced by the short range term. On the other hand, if we only consider the interfacial energy, different types of fluids would separate into a double bubble \cite{lawlor2014double} (or two intervals in the 1-D case). Consequently, we expect fine structures to arise as an outcome of the competition between the long and short range interactions. In this regard, the admissibility conditions on $[\gamma_{ij}]$ are necessary for the global minimizers of \eqref{definition of J} to reproduce the nanostructures of triblock copolymers observed in physical experiments, and this is consistent with recent numerical studies in \cite{wang2019bubble,ren2019stationary}. More importantly, we can see that the free energy originally derived for triblock copolymers can actually describe the universal competition between the interfacial tension and the principle of charge neutrality. By "universal" we mean that it may be representative in many different physical contexts, as discussed later in Remark \ref{charge interpretation}. Our charge analogy also offers an intuitive understanding of the computed phase diagrams. Lastly, we prove that the existing choice \eqref{General matrix} represents all the matrices satisfying the admissibility conditions. It is mathematically interesting (and perhaps practically significant) to explore the entire range of admissible $[\gamma_{ij}]$, which in the block copolymer context may correspond to different block sequences or architectures of multiblock terpolymers.

The rest of the paper is structured as follows: in Section \ref{Background and related studies} we provide more background on related studies; in Section \ref{Numerical inspiration: a counterexample} we present examples of 1-D global minimizers obtained numerically, which in part motivated this work; in Section \ref{physical analogy} we draw an analogy between the free energy and the system of charged immiscible fluids; in Section \ref{conditions on [gamma_{ij}]} based on our analogy we propose some admissibility conditions on $[\gamma_{ij}]$ and make comparison with the existing choices of $[\gamma_{ij}]$ in the literature; in Section \ref{Phase diagrams: most favored repetends} we present relatively comprehensive comparisons among 1-D candidates and provide some intuitive understanding of the comparison results; in Section \ref{discussion} we conclude with some remarks; in Appendix \ref{underlying mechanism} we present a possible underlying mechanism of the interactions between charges; in Appendix \ref{Optimal arrangement of charged balls in 1-D} we discuss a related discrete problem; in Appendices \ref{Calculation of the free energy J in 1-D} and \ref{Calculating the free energy of periodic patterns} we provide some computational details; in Appendix \ref{Alternative derivation} we present an alternative derivation of the admissibility conditions.

\section{Background and related studies}
\label{Background and related studies}

In this section we present some more background of our study. First we recall the binary systems which are simpler and can be illuminating, then we discuss the ternary systems with the focus on 1-D cases.

\subsection{Binary systems}
\label{background on binary systems}

The original O\textendash K free energy \cite{ohta1986equilibrium} is proposed to model the \monomer{A}\monomer{B} diblock copolymer, which is a chain obtained by joining two subchains via a covalent bond, with one subchain made up of monomers of type \monomer{A} and the other of type \monomer{B}. Let $u$ be the difference in the volume fraction between monomers \monomer{A} and \monomer{B} under the incompressibility condition, then the free energy takes on the form \cite{ren2000multiplicity}
\begin{equation}
\label{definition of I_epsilon}
\begin{aligned}
I_\epsilon(u) = &\int_\Omega\Big(\epsilon\,\big|\nabla u(\vec x)\big|^2+\epsilon^{-1}\big(1\!-\!u(\vec x)^2\big)^2\Big)\dd{\vec x}+\null\\
&\gamma\int_\Omega\int_\Omega\big(u(\vec x)\!-\!\omega\big)G(\vec x,\vec y)\big(u(\vec y)\!-\!\omega\big)\dd{\vec x}\dd{\vec y},
\end{aligned}
\end{equation}
with the spatial average of $u$ being a prescribed constant $\omega\in(-1,1)$, and $\gamma$ is related to the total chain length. We call \eqref{definition of I_epsilon} a diffuse interface model, where $\epsilon$ controls the thickness of interfaces separating monomers \monomer{A} and \monomer{B}.

As $\epsilon\rightarrow0$, the functional $I_\epsilon$ will $\Gamma$-converge to a sharp interface limit $I$ \cite[Section 2]{ren2000multiplicity},
\begin{equation}
\label{definition of I}
\begin{aligned}
I(u) =\null&\frac83\,\text{Per}_{\Omega}\big(\{u=1\}\big)+\null\\
&\gamma\int_\Omega\int_\Omega\big(u(\vec x)\!-\!\omega\big)G(\vec x,\vec y)\big(u(\vec y)\!-\!\omega\big)\dd{\vec x}\dd{\vec y},
\end{aligned}
\end{equation}
with the image of $u$ being $\{-1,1\}$. The perimeter term $\text{Per}_{\Omega}$ has its origins in short range interactions and favors phase separation between \monomer{A} and \monomer{B}. However, because of the chemical bond between \monomer{A} and \monomer{B} subchains, \monomer{A} and \monomer{B}-rich domains cannot expand to the macroscopic level. In fact, even an attempt to stretch those domains is entropically unfavorable, since the polymer chains would have to straighten, thus reducing the number of possible molecular configurations. This fact is reflected in the long range term, which would be very large in the case of macrophase separation. As a result of the competition between those two terms, microphase separation occurs and leads to fine structures.

As $\omega$ decreases from $0$ to $-1$ (i.e., the overall volume fraction of \monomer{A} drops from $1/2$ to $0$), roughly speaking, the global minimizer of the free energy is expected to undergo transitions from lamellae to gyroid to cylinders to spheres, see, e.g., \cite[Figure 3]{bates2000block}. When $\omega\approx0$, domains \monomer{A} and \monomer{B} have roughly the same size and take on the lamellar shape; when $\omega\approx-1$, domain \monomer{A} occupies little space and resembles tiny droplets. In the former case, the problem can be reduced to 1-D, and all the 1-D local minimizers of \eqref{definition of I} have been found in \cite{ren2000multiplicity}. The latter case is reminiscent of Wigner crystallization (i.e., due to Coulomb repulsion, confined electrons may form a triangular lattice in 2-D or a body-centered cubic lattice in 3-D). In fact, in the vanishing volume limit, \eqref{definition of I} does converge to some crystallization problem in 2-D or 3-D (see, e.g., \cite[Page 1367]{choksi2010small}). For more studies on 2-D and 3-D cases, see, e.g., \cite{knupfer2016low,morini2014cascade,ren2014double,topaloglu2013nonlocal,ren2011toroidal,sternberg2011global,kang2010pattern,kang2009ring,ren2009oval,ren2008spherical,ren2007many,ren2007single}.

Some variants of \eqref{definition of I} have also been studied in the literature for various applications. A well-known example is Gamow's liquid drop model for atomic nuclei, where $\Omega=\mathbb R^3$ with no boundary conditions (hence $\omega=-1$). For a small volume (i.e., $\text{Vol}\big(\{u=1\}\big)\ll1$), the global minimizer is a perfect ball; for a large volume, the global minimizer does not exist, indicating nuclear fission (see \cite{frank2019non,choksi2017old} and references therein by Kn\"upfer, Muratov and Novaga as well as Choksi and Peletier, see also \cite{carazzato2021minimality} for a generalized result). Another example is the case when $G$ is replaced by the screened Coulomb kernel with $\gamma\gg1$ and $1+\omega\ll1$ but without the volume constraint. This screened version was derived by Muratov \cite{muratov2010droplet} as the $\Gamma$-equivalent of \eqref{definition of I_epsilon} in a regime where $\gamma=\epsilon^{-1}$ and $\omega=\epsilon^{2/3}|\ln\epsilon|^{1/3}\bar\delta-1$ for some fixed $\bar\delta>0$ (whereas \eqref{definition of I} is derived from \eqref{definition of I_epsilon} in a regime where $\gamma$ and $\omega$ are fixed). For this screened version, Muratov et al. showed the 2-D global minimizer to be tiny disks (nearly perfect) on a triangular lattice \cite{goldman2013gamma,goldman2014gamma}. One more example originates from an Ising model with competing interactions \cite{giuliani2016periodic}. In this variant, the perimeter is measured by the 1-norm instead of the Euclidean norm, $\omega$ equals $0$ (although this volume constraint is an outcome of optimization, instead of an assumption), and $G$ is chosen to be either a power law decay function or a screened Coulomb potential. That is, denoting $d$ the spatial dimension, $G(\vec x,\vec y)$ roughly equals $\big(\Vert\vec x\!-\!\vec y\Vert^p\!+\!1\big)^{-1}$ with $p>2d$ \cite{goldman2019optimality}, $\big(\Vert\vec x\!-\!\vec y\Vert_1\!+\!1\big)^{-p}$ with $p>p^*$ for some $p^*\in(d\!+\!1,d\!+\!2)$ \cite{daneri2019exact,kerschbaum2021striped}, $\exp{-\Vert\vec x\!-\!\vec y\Vert_1}/\Vert\vec x\!-\!\vec y\Vert_1^{d-2}$ or $\exp{-\Vert\vec x\!-\!\vec y\Vert}/\Vert\vec x\!-\!\vec y\Vert^{d-2}$ \cite{daneri2020pattern}. In any of those cases, the global minimizer is shown to be equispaced lamellae under some conditions. For $G(\vec x,\vec y)$ roughly equal to $\big(\Vert\vec x\!-\!\vec y\Vert_1\!+\!1\big)^{-p}$ with $p\geq d+2$, analogous results have recently been obtained in the diffuse interface setting \cite{daneri2019one,daneri2022one}, and also in the volume-constrained case for any $\omega\in(-1,1)$ \cite{daneri2021exact}. The word "roughly" is used above to indicate that $G$ needs some suitable modifications since periodic boundary conditions were chosen in those works. Although the screened Coulomb kernel $\exp{-\Vert\vec r\Vert}/\Vert\vec r\Vert$ does have some physical origins, the existing results do not include other physically interesting cases such as $p=d\!+\!1$, $p=d$, and $p=d\!-\!2$, corresponding to thin magnetic films, 3-D micromagnetics, and Coulomb potential, respectively \cite[Page 2533]{daneri2020pattern}.

The results in the binary case may shed light on the ternary case. For example, the tiny droplets found in binary systems when $\omega\approx-1$, correspond to the tiny bubbles found in ternary systems in the vanishing volume limit $\omega_1,\omega_2\rightarrow0$. Furthermore, the periodic lamellar pattern in binary systems motivates us to study its analogues in ternary systems.

\subsection{Ternary systems}
\label{sec:ternary}
Compared to binary systems, mathematical studies on ternary systems are still in the early stages. For $\Omega=[0,1]$ with periodic boundary conditions (think of $\Omega$ as a circle), Ren and Wei proved \cite[Section 4]{ren2003triblock2} that all the cyclic patterns \monomer{A}\monomer{B}\monomer{C} $\cdots\;$\monomer{A}\monomer{B}\monomer{C} with fine periodicity, such as the one depicted in Figure \ref{Ren vs. Mogi}, are local minimizers of \eqref{definition of J}. But they pointed out \cite[Page 190]{ren2003triblock2} that the global minimizer is hard to find even in 1-D, because one has to consider all the possible patterns, not just the cyclic ones. Note that the binary system \eqref{definition of I} has no such issue in 1-D, because the patterns can only be \monomer{A}\monomer{B}\monomer{A}\monomer{B} $\cdots$ without \monomer{C}. Nonetheless, Ren and Wei conjectured \cite[Conjecture 4.10]{ren2003triblock2} that the global minimizer has a cyclic pattern. This conjecture might have been partly motivated by the lamellar phase of triblock copolymers found in the experiment by Mogi et al. \cite{mogi1994superlattice}. However, upon close scrutiny, we note that the lamellar phase found by Mogi et al. \cite{mogi1994superlattice} (and also some other experimental or numerical studies such as \cite{mogi1992preparation,zheng1995morphology,matsen1998gyroid,bailey2001morphological,bailey2002noncubic,hardy2002model,tang2004morphology,jiang2005effect,xia2005microphase,sun2008morphology,liu2012theoretical}), is not the cyclic \monomer{A}\monomer{B}\monomer{C} phase but the noncyclic \monomer{A}\monomer{B}\monomer{C}\monomer{B} phase, as shown in Figure \ref{Ren vs. Mogi}. In fact, the cyclic \monomer{A}\monomer{B}\monomer{C} phase is noncentrosymmetric, while most structures formed by \monomer{A}\monomer{B}\monomer{C} triblock copolymers are centrosymmetric \cite[Page 6487]{wickham2001noncentrosymmetric}.

\begin{figure}[H]
\centering
\raisebox{28.7pt}{\includegraphics[width=200pt]{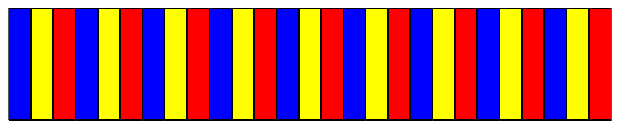}}
\hspace{10pt}
\includegraphics[width=109pt]{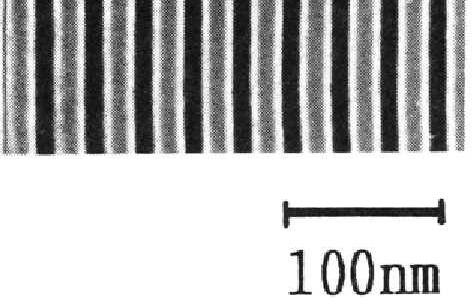}
\caption{(Left) cyclic \monomer{A}\monomer{B}\monomer{C} lamellar phase proposed by Ren and Wei. (Figure from \cite[Figure 1]{ren2014asymmetric}, reprinted with permission. Copyright \copyright{} 2014 Society for Industrial and Applied Mathematics. All rights reserved.) (Right) noncyclic \monomer{A}\monomer{B}\monomer{C}\monomer{B} phase found by Mogi et al., with black, white, and gray indicating isoprene, styrene, and 2-vinylpyridine domains, respectively. (Figure from \cite[Figure 2-a]{mogi1992preparation}, reprinted with permission. Copyright \copyright{} 1992 American Chemical Society. All rights reserved.)}
\label{Ren vs. Mogi}
\end{figure}

After careful calculations (see Sections \ref{Numerical inspiration: a counterexample} and \ref{Phase diagrams: most favored repetends}), we found that Ren\textendash Wei conjecture holds for some parameters, while for some other parameters the pattern \monomer{A}\monomer{B}\monomer{C}\monomer{B} $\cdots\;$\monomer{A}\monomer{B}\monomer{C}\monomer{B} has lower free energy than \monomer{A}\monomer{B}\monomer{C} $\cdots\;$\monomer{A}\monomer{B}\monomer{C}. Interestingly, both the original work \cite{nakazawa1993microphase} by Ohta et al. and an ensuing numerical study \cite{zheng1995morphology} by Zheng et al. chose the \monomer{A}\monomer{B}\monomer{C}\monomer{B} phase instead of the \monomer{A}\monomer{B}\monomer{C} phase as the representative of the lamellar phase when studying the ternary O\textendash K free energy, although such choices in those early works were largely empirical. Under suitable conditions, it is believed that the optimal patterns should be periodic in the vanishing $c_{ij}$ limit, but a rigorous proof is still missing.

It is noteworthy that Ren\textendash Wei conjecture was proposed for the $[\gamma_{ij}]$ given in \eqref{Ren's matrix}. However, in the literature there have been other choices of $[\gamma_{ij}]$. In the original work, Nakazawa and Ohta \cite{nakazawa1993microphase} derived the matrix \eqref{Ohta's matrix} for triblock copolymers from statistical physics. Later Ren and Wei \cite{ren2003triblock1} derived a different matrix \eqref{Ren's matrix} which was used in their 1-D study \cite{ren2003triblock2}. In their subsequent 2-D works (e.g., \cite[Equation (1.1)]{ren2015double}), Ren et al. used some general matrices \eqref{General matrix} instead. A degenerate case \eqref{Blend's matrix}, which was derived \cite{choksi2005diblock} for mixtures of \monomer{A}\monomer{B} diblock copolymers and \monomer{C} homopolymers, has also been studied in the literature (e.g., \cite{ohta1995dynamics,ito1998domain,van2009stability}), and the patterns \monomer{A}\monomer{B}\monomer{A}\monomer{B} $\cdots\;$\monomer{C} are proved to be local minimizers \cite[Section 4]{choksi2005diblock} and also global minimizers \cite[Theorems 4 and 5]{van2008copolymer}. Multiscale phase separation occurs in such a degenerate case: macrophase separation between copolymers and homopolymers, and microphase separation between \monomer{A} and \monomer{B} within the copolymers. In 1-D, this phenomenon is reflected in the patterns where \monomer{A} and \monomer{B} appear many times while \monomer{C} appears only once. The choices of $[\gamma_{ij}]$ are discussed in detail later: in Section \ref{physical analogy} we give interpretation to $[\gamma_{ij}]$ by analogy with charged immiscible fluids; in Section \ref{conditions on [gamma_{ij}]} we propose admissibility conditions on $[\gamma_{ij}]$ based on this interpretation, and show a one-to-one correspondence between the matrices satisfying the admissibility conditions and the general matrices \eqref{General matrix} used by Ren et al.

\section{Global minimizers in 1-D}
\label{Numerical inspiration: a counterexample}

We now present studies on the global minimizers of \eqref{definition of J} for $\Omega=[0,1]$ with periodic boundary conditions. In this 1-D setting, it is computationally feasible to exhaust all the possible patterns of reasonable lengths (e.g., up to 20) to find the global minimizer, until further increases in the pattern length no longer lower the free energy. Note that the pattern length equals the number of interfaces \cite[Definition 4.1]{ren2003triblock2} and we use the $[\gamma_{ij}]$ given by \eqref{Ren's matrix}. Computational details of our numerical experiments can be found in Appendix \ref{Calculation of the free energy J in 1-D}.

\begin{figure}[bpht]
\centering
\includegraphics[width=335pt]{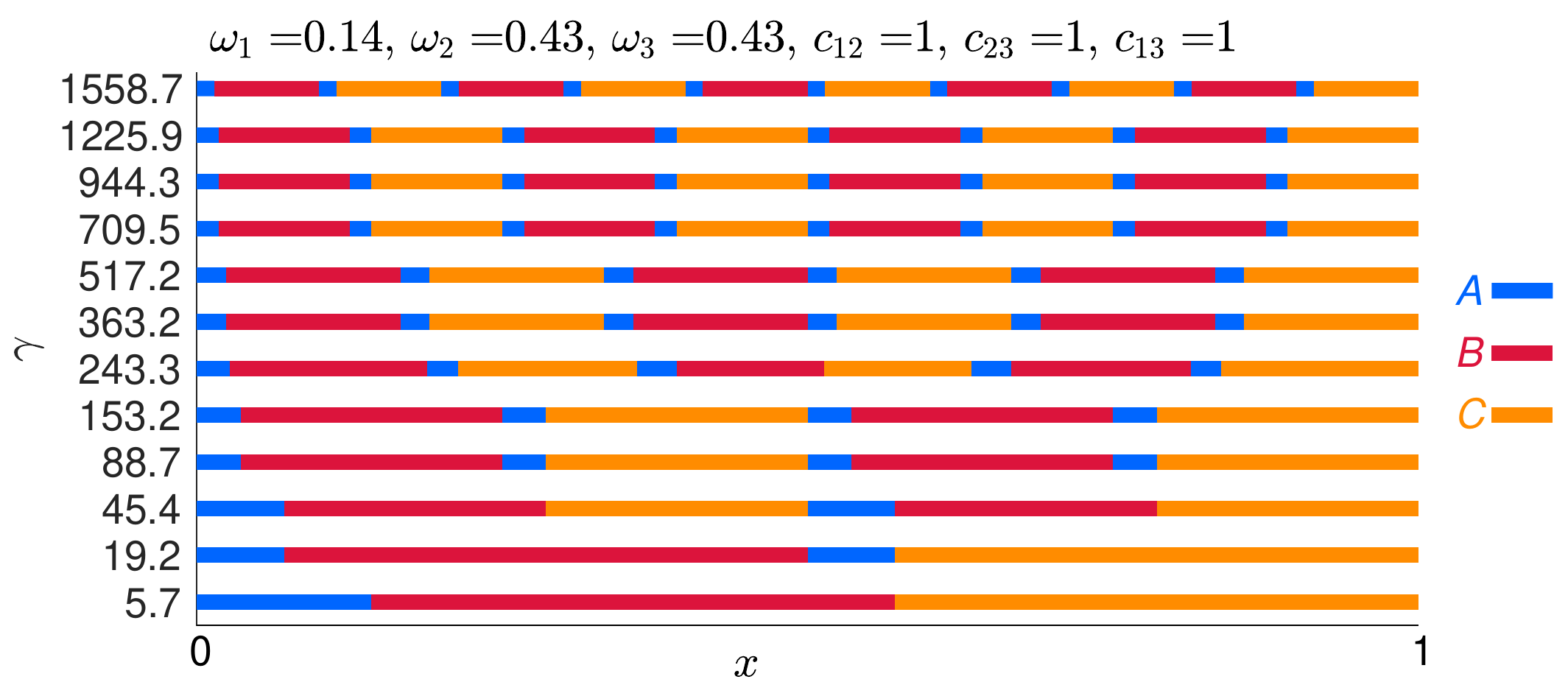}
\caption{Numerical global minimizers obtained by the exhaustive search in a 1-D periodic cell. The horizontal $x$-axis indicates the spatial position (note that 0 and 1 represent the same point under periodic boundary conditions), and the vertical $\gamma$-axis is the overall coefficient used in \eqref{Ren's matrix}.}
\label{133.pdf}
\end{figure}

A typical example of our numerical results is shown in Figure \ref{133.pdf}, where we choose $\omega_1<\omega_2=\omega_3$ and $c_{12}=c_{13}=c_{23}$, and the regions labelled by \monomer{A}, \monomer{B} and \monomer{C} belong to $\Omega_1$, $\Omega_2$ and $\Omega_3$, respectively. When $\gamma$ is very small (e.g., $\gamma=5.7$ at the bottom), the interfacial energy dominates the free energy, and our numerical results indicate that the global minimizer has only three layers. As $\gamma$ increases, more and more layers emerge, because the contribution of the interfacial energy becomes less and less dominant. For large $\gamma$ (e.g., $\gamma=1558.7$ at the top), the numerical global minimizer tends to develop patterns like \monomer{A}\monomer{B}\monomer{A}\monomer{C} $\cdots\,$\monomer{A}\monomer{B}\monomer{A}\monomer{C}, which are repetitions (or periodic extensions) of the repetend \monomer{A}\monomer{B}\monomer{A}\monomer{C}. This numerical result is a counterexample to Ren\textendash Wei conjecture \cite[Conjecture 4.10]{ren2003triblock2}, that for any $\gamma>0$, any global minimizer of \eqref{definition of J} has a cyclic pattern \monomer{A}\monomer{B}\monomer{C} $\cdots\,$\monomer{A}\monomer{B}\monomer{C} (or \monomer{A}\monomer{C}\monomer{B} $\cdots\,$\monomer{A}\monomer{C}\monomer{B}), whose repetend is \monomer{A}\monomer{B}\monomer{C} (or \monomer{A}\monomer{C}\monomer{B}). However, for some other parameters (e.g., $\omega_1=\omega_2=\omega_3$ and $c_{12}=c_{13}=c_{23}$), the numerical global minimizer is indeed of a cyclic pattern for all the $\gamma$ that we have tested. To summarize, our numerical results suggest that the pattern of the global minimizer depends on the parameters.

Motivated by the above numerical results, we calculate and compare the analytic free energies of \monomer{A}\monomer{B}\monomer{C} $\cdots\,$\monomer{A}\monomer{B}\monomer{C} and \monomer{A}\monomer{B}\monomer{A}\monomer{C} $\cdots\,$\monomer{A}\monomer{B}\monomer{A}\monomer{C}. For the cyclic pattern with \monomer{A}\monomer{B}\monomer{C} identically repeating for $n$ times, the free energy has already been derived in \cite[Equation (4.25)]{ren2003triblock2}:
\begin{equation}
\label{free energy of ABC}
J=(c_{12}+c_{13}+c_{23})\,n+\frac{\gamma}{16\hspace{0.5pt}n^2}\Big(5-\frac{9abc}{ab+bc+ca}\Big),
\end{equation}
where $a=\omega_1$, $b=\omega_2$ and $c=\omega_3$. In Appendix \ref{Calculating the free energy of periodic patterns} we outline their derivation as well as our analogous derivation for the pattern with \monomer{A}\monomer{B}\monomer{A}\monomer{C} identically repeating for $n$ times. The latter yields (for convenience we assume that all the \monomer{A} layers have the same layer width, consistent with our numerical observations under various parameters)
\begin{equation}
\label{free energy of ABAC}
J = 2(c_{12}+c_{13})\,n+\frac{\gamma}{16\hspace{0.5pt}n^2}\Big(2+\frac{3 a^2}{a b+a c+b c}\Big),
\end{equation}
which is independent of $c_{23}$ since there is no interfaces between \monomer{B} and \monomer{C}.

The following proposition rigorously demonstrates that for some parameters the repetend \monomer{A}\monomer{B}\monomer{A}\monomer{C} can indeed be energetically more favorable than the repetend \monomer{A}\monomer{B}\monomer{C}, and thus gives explanations to the numerical results in Figure \ref{133.pdf}.

\begin{proposition}
\label{ABAC is sometimes better than ABC}
There exists a set of parameters $a,b,c>0$ ($a+b+c=1$) and $\{c_{ij}\}$ (satisfying triangle inequalities) such that for $\gamma$ large enough, $\min\limits_{n\in\mathbb N}J$ is greater in \eqref{free energy of ABC} than \eqref{free energy of ABAC}.
\end{proposition}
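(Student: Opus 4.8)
The plan is to exhibit an explicit one-parameter family (or a single point) of parameters and then optimize both free energies over $n$ in closed form. Since both \eqref{free energy of ABC} and \eqref{free energy of ABAC} have the shape $J(n) = Kn + L\gamma/n^2$ with $K,L>0$ depending only on $\{c_{ij}\}$ and $a,b,c$, I would first record the elementary fact that minimizing $Kn + L\gamma/n^2$ over positive \emph{real} $n$ gives minimum value $\tfrac{3}{2^{2/3}} K^{2/3}(L\gamma)^{1/3}$, attained at $n_* = (2L\gamma/K)^{1/3}$. For the integer problem, as $\gamma\to\infty$ the optimal integer $n$ tracks $n_*\to\infty$, so $\min_{n\in\mathbb N} J = \tfrac{3}{2^{2/3}} K^{2/3}(L\gamma)^{1/3}\,(1+o(1))$; hence for large $\gamma$ the comparison between the two patterns reduces to comparing the constants $K_{\mathrm{ABC}}^{2}L_{\mathrm{ABC}}$ versus $K_{\mathrm{ABAC}}^{2}L_{\mathrm{ABAC}}$ (taking cubes to clear the fractional powers). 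So it suffices to find admissible parameters with
\begin{equation*}
(c_{12}+c_{13}+c_{23})^2\Big(5-\frac{9abc}{ab+bc+ca}\Big) \;>\; \big(2(c_{12}+c_{13})\big)^2\Big(2+\frac{3a^2}{ab+ac+bc}\Big).
\end{equation*}

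Next I would choose parameters that make the left side large and the right side small. The natural move, guided by Figure \ref{133.pdf}, is to take $a=\omega_1$ small and $b=c=(1-a)/2$, and to take the interfacial tensions so that $c_{12}+c_{13}$ is as small as the triangle inequalities permit relative to $c_{23}$. Concretely, set $c_{12}=c_{13}=t$ and $c_{23}=s$; the triangle inequalities $c_{23}\le c_{12}+c_{13}$, etc., force $s\le 2t$, and I would push toward the boundary $s=2t$ (or $s$ slightly below $2t$ to keep strict inequalities), which is the regime where \monomer{B}--\monomer{C} interfaces are expensive and the \monomer{A}\monomer{B}\monomer{A}\monomer{C} pattern — which has no \monomer{B}--\monomer{C} interface — is favored. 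With $s=2t$ the inequality becomes, after dividing by $t^2$,
\begin{equation*}
16\Big(5-\frac{9abc}{ab+bc+ca}\Big) \;>\; 16\Big(2+\frac{3a^2}{ab+ac+bc}\Big),
\end{equation*}
i.e. $3 > \dfrac{9abc + 3a^2}{ab+ac+bc}$, i.e. $ab+ac+bc > 3abc + a^2$. Substituting $b=c=(1-a)/2$ and letting $a\to 0^+$, the left side tends to $\tfrac14$ while the right side tends to $0$, so the strict inequality holds for all sufficiently small $a>0$; then by continuity it still holds for $s$ slightly less than $2t$, giving genuinely admissible $\{c_{ij}\}$ with strict triangle inequalities.

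The remaining step is to convert the "constants comparison" back into the statement about $\min_{n\in\mathbb N}J$ for $\gamma$ large enough. Here I would be slightly careful rather than hand-wave the $o(1)$: write $m_{\mathrm{ABC}}(\gamma)=\min_{n\in\mathbb N}[K_1 n + L_1\gamma/n^2]$ and similarly $m_{\mathrm{ABAC}}(\gamma)$, note each is bounded above by its real-valued minimum $C_i\gamma^{1/3}$ and below by evaluating at $n=\lceil n_*\rceil$, which also gives $C_i\gamma^{1/3}(1+O(\gamma^{-1/3}))$; since $C_{\mathrm{ABC}}>C_{\mathrm{ABAC}}$ strictly by the previous paragraph, there is $\gamma_0$ beyond which $m_{\mathrm{ABC}}(\gamma) > m_{\mathrm{ABAC}}(\gamma)$. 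I expect the only real subtlety — the "main obstacle," though it is mild — to be bookkeeping the integer rounding uniformly enough to name an explicit $\gamma_0$, but since we only need \emph{existence} of such $\gamma$, the crude two-sided bound above suffices and no sharp estimate is needed. Everything else is the explicit parameter choice and an elementary inequality, so the proof should be short.
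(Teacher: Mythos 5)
Your proposal is correct and rests on the same asymptotic framework as the paper's proof: treat $n$ as continuous, extract the leading $\gamma^{1/3}$ coefficient of each minimum, and reduce the claim to a strict inequality between the two constants of the form $K^2L$. Where you genuinely differ is in the choice of witness. The paper keeps all interfacial tensions equal, $c_{12}=c_{13}=c_{23}=1$, takes $(a,b,c)=(0.14,0.43,0.43)$, and verifies the resulting inequality numerically ($2.52$ vs.\ $2.46$); this has the virtue of matching Figure \ref{133.pdf} and showing that the conjecture already fails for symmetric tensions. You instead push $c_{23}$ to the triangle-inequality boundary $c_{23}=2c_{12}=2c_{13}$, which makes the two short-range coefficients coincide ($c_{12}+c_{13}+c_{23}=2(c_{12}+c_{13})=4t$) and collapses the comparison to the clean analytic inequality $ab+ac+bc>3abc+a^2$, verified in the limit $a\to0^+$ with $b=c=(1-a)/2$ and then perturbed to strict triangle inequalities by continuity. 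This is fully analytic, produces an open set of admissible parameters, and makes the mechanism transparent (\monomer{A}\monomer{B}\monomer{A}\monomer{C} has no \monomer{B}--\monomer{C} interfaces, so make those expensive), at the cost of requiring asymmetric, nearly degenerate tensions rather than the symmetric ones of the numerics. Both witnesses are valid. One small slip in your write-up: the integer minimum is bounded \emph{below} by the real-valued minimum (minimizing over a larger set can only decrease the value) and \emph{above} by $J(\lceil n_*\rceil)$, not the other way around as stated; the resulting two-sided estimate $C_i\gamma^{1/3}+O(\gamma^{-1/3})$ is nevertheless exactly what you need, and is in fact more care on the rounding than the paper itself spends.
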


\begin{proof}
For a fixed large $\gamma$, in \eqref{free energy of ABC} and \eqref{free energy of ABAC} we can regard $J$ as a continuous function of $n$ (for minimization purposes), since the optimal $n$ is of the order of $\gamma^{1/3}$. Therefore for \eqref{free energy of ABC} we have
\begin{equation*}
\min_{n\in\mathbb N}J=\frac34\sqrt[3]{(c_{12}+c_{13}+c_{23})^2\Big(5-\frac{9abc}{ab+bc+ca}\Big)\gamma}+O(\gamma^{-1/3}),
\end{equation*}
and for \eqref{free energy of ABAC} we have
\begin{equation*}
\min_{n\in\mathbb N}J=3\sqrt[3]{\frac{(c_{12}+c_{13})^2}{16}\Big(2+\frac{3 a^2}{a b+a c+b c}\Big)\gamma}+O(\gamma^{-1/3}),
\end{equation*}
where the coefficient of $\sqrt[\raisebox{1.5pt}{\fontsize{7pt}{0pt}\selectfont\text{$3$}}]\gamma$ is greater for \eqref{free energy of ABC} than \eqref{free energy of ABAC} under some choices of parameters. For example, in Figure \ref{133.pdf} where $a=0.14$, $b=0.43$, $c=0.43$ and $c_{12}=c_{23}=c_{13}=1$, the former is 2.52, and the latter is 2.46.
\end{proof}

The numerical results for other choices of $\{\omega_i\}$ may be slightly different. For example, if \monomer{B} instead of \monomer{A} is the minority species, i.e., $\omega_2\ll \omega_1\approx \omega_3$, then the repetend \monomer{B}\monomer{A}\monomer{B}\monomer{C} tends to be favored. Our observation is that the minority species tends to appear twice in each period, separating the other two species so that they do not come into contact. When there are two evenly matched minority species, i.e., $\omega_1\approx \omega_2\ll \omega_3$, the repetend \monomer{A}\monomer{B}\monomer{A}\monomer{C}\monomer{B}\monomer{A}\monomer{B}\monomer{C} tends to be favored. If all species are evenly matched, i.e., $\omega_1\approx \omega_2\approx \omega_3$, then the repetend \monomer{A}\monomer{B}\monomer{C} does seem to be favored. Other patterns may arise when $\{c_{ij}\}$ vary, for example, if the interfaces between \monomer{A} and \monomer{B} are barely penalized, i.e., $c_{12}\ll c_{23}\approx c_{13}$, then the global minimizer tends to have \monomer{A}\monomer{B}\monomer{A}\monomer{B} $\cdots\;$\monomer{A}\monomer{B}\monomer{C} or \monomer{A}\monomer{B}\monomer{A}\monomer{B} $\cdots\;$\monomer{A}\monomer{C} as its repetend, and in this respect, with hindsight Ren\textendash Wei conjecture cannot be true for certain parameters. In Section \ref{Phase diagrams: most favored repetends} we will include all those patterns as candidates and present relatively comprehensive comparison results.

If we switch from periodic boundary conditions to the case where there is no boundary condition (essentially by choosing $G(x,y)=-|x\!-\!y|/2$), the numerical results remain similar, except that the layers near the boundaries (0 and 1) become thinner. Therefore, for illustrative purposes it is sufficient for us to only present the case of periodic boundary conditions.

\section{Analogy for ternary O\textendash K free energy}
\label{physical analogy}

\subsection{Charge Interpretation}
\label{charge interpretation}

We now reformulate the long range term of \eqref{definition of J_epsilon} or \eqref{definition of J} in terms of generalized charge densities, and then interpret the coefficient matrix $[\gamma_{ij}]$ using the interactions between generalized charges. This interpretation is useful in subsequent sections as we impose conditions on $[\gamma_{ij}]$ and explain our numerical results. For binary systems, such a charge interpretation is natural by considering the classical positive and negative charges with Coulomb's law, see, e.g., \cite{choksi2017old}. For ternary systems, we have to work with three types of generalized charges, any two of which interact via the kernel $G$, like classical charges obeying Coulomb's law.

Suppose there are three types of charges distributed in $\Omega$, with $\rho_1$, $\rho_2$ and $\rho_3$ denoting their density fields, respectively. Then, similar to the classical Coulomb's law, we define the total potential energy between charges as
\begin{equation}
\label{total electrostatic potential energy}
U(\vec\rho)=\frac12\int_{\Omega}\int_{\Omega}\vec\rho(\vec x)^{\rm T}\,[f_{ij}]\,\vec\rho(\vec y)\,G(\vec x,\vec y)\dd{\vec x}\dd{\vec y},
\end{equation}
where the vectorized density $\vec\rho=[\rho_1,\rho_2,\rho_3]^{\rm T}$ is element-wise nonnegative and integrable over $\Omega$, and $[f_{ij}]\in\mathbb R^{3\times3}$ is the interaction strength matrix between charges (the potential energy due to the interaction between a unit amount of type $i$ charge at point $\vec x$ and a unit amount of type $j$ charge at point $\vec y$ is assumed to be $f_{ij}\,G(\vec x,\vec y)$, so \eqref{total electrostatic potential energy} is the pairwise "sum" of the potential energy between any two point charges).

\begin{definition}
\label{charge neutrality conditions}
 We say that $\vec\rho$ satisfies the overall charge neutrality condition if 
 $\int_\Omega\vec\rho(\vec x)\dd{\vec x}
 $
 is parallel to the vector $\vec 1=[1,1,1]^{\rm T}$. Moreover,
 $\vec\rho$ satisfies the pointwise charge neutrality condition if $\vec\rho(\vec x)$
  is parallel to the vector $\vec 1$
 for $\vec x\in\Omega$ almost everywhere.
\end{definition}

\begin{theorem}
\label{equivalence between long range term and electrostatic potential energy}
Under periodic or Neumann boundary conditions, the long range term in \eqref{definition of J_epsilon} or \eqref{definition of J} is twice the total potential energy $U(\vec\rho)$ defined in \eqref{total electrostatic potential energy}, with
\begin{equation*}
[f_{ij}]=
\begin{bmatrix}
\omega_1 &0 &0\\
0 &\omega_2 &0\\
0 &0 &\omega_3
\end{bmatrix}
[\gamma_{ij}]
\begin{bmatrix}
\omega_1 &0 &0\\
0 &\omega_2 &0\\
0 &0 &\omega_3
\end{bmatrix},
\end{equation*}
and
\begin{equation*}
\vec\rho=
\begin{bmatrix}
\omega_1 &0 &0\\
0 &\omega_2 &0\\
0 &0 &\omega_3
\end{bmatrix}
^{-1}
\vec u\quad\text{or}\quad\vec\rho=
\begin{bmatrix}
\omega_1 &0 &0\\
0 &\omega_2 &0\\
0 &0 &\omega_3
\end{bmatrix}
^{-1}
\begin{bmatrix}
\bm1_{\Omega_1}\\
\bm1_{\Omega_2}\\
\bm1_{\Omega_3}
\end{bmatrix}.
\end{equation*}
Such $\vec\rho$ satisfies the overall charge neutrality condition.
\end{theorem}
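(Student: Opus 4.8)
The plan is to expand the quadratic long range term, discard the pieces that vanish by a mean-zero property of $G$, and recognize what is left as $2U(\vec\rho)$ after the diagonal rescaling. I will carry out the argument for the sharp-interface functional $J$; the proof for $J_\epsilon$ is word for word the same with $\bm1_{\Omega_i}$ replaced by $u_i$, since only the spatial average (namely $\int_\Omega\bm1_{\Omega_i}=|\Omega_i|=\omega_i|\Omega|$, respectively $\int_\Omega u_i=\omega_i|\Omega|$) enters.

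The one real input is that, under periodic or Neumann boundary conditions, the Green's function of $-\Delta$ is normalized so that $\int_\Omega G(\vec x,\vec y)\dd{\vec x}=0$ for every $\vec y$ and, by symmetry of $G$, $\int_\Omega G(\vec x,\vec y)\dd{\vec y}=0$ for every $\vec x$; this is exactly the solvability normalization for $-\Delta G(\cdot,\vec y)=\delta_{\vec y}-|\Omega|^{-1}$, and it is here that the hypothesis rules out Dirichlet conditions. Writing $v_i=\bm1_{\Omega_i}$ and expanding $\big(v_i(\vec x)-\omega_i\big)G(\vec x,\vec y)\big(v_j(\vec y)-\omega_j\big)$ gives four terms; the three that carry a bare factor $G$ paired with a constant in one variable — namely $-\omega_j\iint v_i(\vec x)G$, $-\omega_i\iint G\,v_j(\vec y)$, and $+\omega_i\omega_j\iint G$ — each vanish after integrating out that variable. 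Hence, by Fubini (legitimate since $G\in L^1(\Omega\times\Omega)$ and $v_i$ is bounded), the long range term collapses to $\sum_{i,j}\gamma_{ij}\iint v_i(\vec x)G(\vec x,\vec y)v_j(\vec y)\dd{\vec x}\dd{\vec y}$.

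It then remains to substitute the definition $\vec\rho=D^{-1}[v_i]$ with $D=\mathrm{diag}(\omega_1,\omega_2,\omega_3)$, i.e.\ $v_i=\omega_i\rho_i$, which turns the collapsed term into $\sum_{i,j}(\omega_i\gamma_{ij}\omega_j)\iint\rho_i(\vec x)G(\vec x,\vec y)\rho_j(\vec y)=\iint\vec\rho(\vec x)^{\mathrm T}\big(D[\gamma_{ij}]D\big)\vec\rho(\vec y)\,G(\vec x,\vec y)$, i.e.\ precisely $2U(\vec\rho)$ with $[f_{ij}]=D[\gamma_{ij}]D$. For overall charge neutrality, $\int_\Omega\vec\rho\,\dd{\vec x}=D^{-1}(|\Omega_1|,|\Omega_2|,|\Omega_3|)^{\mathrm T}=D^{-1}(\omega_1|\Omega|,\omega_2|\Omega|,\omega_3|\Omega|)^{\mathrm T}=|\Omega|\,\vec 1$, which is parallel to $\vec 1$; the computation for $\vec\rho=D^{-1}\vec u$ is identical using $\int_\Omega u_i=\omega_i|\Omega|$. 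There is no genuinely hard step: the content is the mean-zero identity for $G$ together with bookkeeping, and the only points requiring care are the Fubini interchange and keeping the boundary conditions consistent with those under which $G$ and the functionals were defined. I would present the periodic case in detail and remark that the Neumann case is verbatim the same.
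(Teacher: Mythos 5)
Your proof is correct and follows essentially the same route as the paper's: both rest on the mean-zero normalization $\int_\Omega G(\vec x,\vec y)\dd{\vec x}=0$ under periodic or Neumann conditions to collapse the long range term to $\sum_{i,j}\gamma_{ij}\iint v_i\,G\,v_j$, followed by the diagonal rescaling $v_i=\omega_i\rho_i$ and the direct computation $\int_\Omega\vec\rho=|\Omega|\,\vec 1$ for overall charge neutrality. You simply spell out the expansion and the Fubini step that the paper leaves implicit.
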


\begin{proof}
Under periodic or Neumann boundary conditions, we have $\int_{\Omega}G(\vec x,\vec y)\dd{\vec x}=0$ for any $\vec y\in\Omega$, therefore the long range term of \eqref{definition of J_epsilon} equals
\begin{equation}
\label{long range term}
\int_{\Omega}\int_{\Omega}\vec u(\vec x)^{\rm T}\,[\gamma_{ij}]\,\vec u(\vec y)\,G(\vec x,\vec y)\dd{\vec x}\dd{\vec y},
\end{equation}
and that of \eqref{definition of J} equals
\begin{equation}
\label{general framework}
\sum_{i=1}^3\sum_{j=1}^3\gamma_{ij}\int_{\Omega}\int_{\Omega}\bm1_{\Omega_i}(\vec x)\,G(\vec x,\vec y)\,\bm1_{\Omega_j}(\vec y)\dd{\vec x}\dd{\vec y}.
\end{equation}
Therefore the first statement is clearly correct. As for the second statement, we have
\begin{equation*}
\frac1{|\Omega|}\int_\Omega\vec\rho(\vec x)\dd{\vec x}=\vec 1,
\end{equation*}
so $\vec\rho$ satisfies the overall charge neutrality condition.
\end{proof}

\begin{remark}
Recall that the classical electrostatic potential energy can be expressed as $\epsilon_0\int_{\mathbb R^3}\big|\vec E\big|^2/2$, where $\epsilon_0$ is the vacuum permittivity, and $\vec E$ is the electric field. Analogously \eqref{total electrostatic potential energy}, the potential energy between our generalized charges, can be rewritten as $\int_{\Omega}\,[\vec E_i]\cdot[f_{ij}]\cdot[\vec E_i]/2$. To be more precise, since $G$ is the Green's function of $-\Delta$, we have
\begin{equation}
\label{L^2 norm of electrostatic field}
U(\vec\rho)=\frac12\sum_{i=1}^3\sum_{j=1}^3f_{ij}\int_{\Omega}\vec E_i(\vec x)\cdot\vec E_j(\vec x)\dd{\vec x},
\end{equation}
where
\begin{equation*}
\vec E_i(\vec x)=\nabla\raisebox{-3pt}{\fontsize{11pt}{0pt}\selectfont\text{$\vec x$}}\int_{\Omega}G(\vec x,\vec y)\,\rho_i(\vec y)\dd{\vec y}.
\end{equation*}
Note that the form \eqref{L^2 norm of electrostatic field} is equivalent to the commonly used expression of the long range term in terms of $H^{-1}$ norm or $(-\Delta)^{-1/2}$ operator, see e.g., \cite[Definition 1]{van2008copolymer}, \cite[Page 1338]{choksi2010small} and \cite[Page 168]{choksi2003derivation}.
\end{remark}

Theorem \ref{equivalence between long range term and electrostatic potential energy} shows the connection between the long range term of the ternary O\textendash K free energy and the potential energy between three types of generalized charges. The choice of $\vec\rho$ in Theorem \ref{equivalence between long range term and electrostatic potential energy} can be understood as follows: each monomer of type $i$ hypothetically carries a fixed amount of type $i$ charge, so type $i$ charge is uniformly distributed in domain $\Omega_i$, and we fix the total amount of type $i$ charge to be the total volume $|\Omega|$. Therefore the local charge density (i.e., the amount of charge per unit volume near a single point) of type $i$ charge is $\rho_i = u_i/\omega_i$ or $\rho_i = \bm1_{\Omega_i}/\omega_i$. In this way we can interpret the phase fields as charge density fields. Although it seems like a simple rescaling, the incompressibility condition is usually imposed on phase fields, but not on charge density fields. In Section \ref{physical intuition of conditions}, the interpretation by charge densities (without the incompressibility condition) enables us to propose admissibility conditions on $[f_{ij}]$ and thus $[\gamma_{ij}]$.

\subsection{Analogy with uniformly charged immiscible fluids}

With the above charge interpretation, we can make an analogy between the ternary O\textendash K free energy and the system of uniformly charged immiscible fluids:
\begin{itemize}

\item Like the classical Van der Waals\textendash Cahn\textendash Hilliard free energy \cite{cahn1958free}, we have different types of immiscible fluids confined to $\Omega$ subject to the incompressibility condition. The immiscibility causes them to separate, which is reflected in the interfacial energy (i.e., short range term) of \eqref{definition of J_epsilon} or \eqref{definition of J}.

\item Like the liquid drop model \cite{gamow1930mass}, we also assume that different types of fluids uniformly carry different types of charges respectively, so that there is always a tendency for them to mix. This is captured by the long range term of \eqref{definition of J_epsilon} or \eqref{definition of J}.

\end{itemize}
The above analogy is what we call "uniformly charged immiscible fluids", but its applications are certainly not limited to hypothetically charged fluids. The same free energy can arise in completely different physical contexts. In fact, the binary version \eqref{definition of I_epsilon} has been used to describe a wide diversity of systems ranging from polymer systems to ferroelectric/ferromagnetic systems to quantum systems to reaction-diffusion systems \cite{muratov1998theory,muratov2002theory}, and its sharp interface limit \eqref{definition of I} has also been used in the astrophysical context to model the structure of nuclear matter in the crust of neutron stars \cite{knupfer2016low}. The underlying physics are very different: the short range term may arise out of actual interfacial tension, or ejection of holes from the antiferromagnet, or the nuclear force; the long range term may come from actual electrostatic repulsion, or diffusion of chemically reacting species, or entropy of the system. But all those systems have one thing in common: intricate patterns arise from the competition between the short range tendency towards phase separation and the long range tendency to mix.

In the literature (e.g., \cite{muratov2002theory,alberti2009uniform,muratov2010droplet}), the short range term is commonly regarded as "attractive", and the long range term is "repulsive", mainly because the former causes phases of the same type to congregate and form large domains, while the latter suppresses or inhibits large domains. However according to our analogy, it also seems plausible that the short range term is "repulsive" and the long range term is "attractive", because the former causes immiscible fluids of different types to separate into different regions, while the latter tries to mix them back to form a neutrally charged mixture.

Note that our analogy can be generalized to multiphase systems of any number of phases, and the discussion in this section can be translated to those more general cases. We only present the ternary case because it is the simplest case (apart from the classical binary case), and most existing mathematical works only studied ternary systems, except that \cite[Chapter 4]{wang2018analysis} studied a planar triple bubble in a quaternary system.

In the block copolymer setting, the pointwise charge neutrality condition in Definition \ref{charge neutrality conditions} corresponds to the completely mixed state or disordered phase, i.e., $u_i(\vec x)=\omega_i$ for any $\vec x\in\Omega$ in \eqref{definition of J_epsilon}. In the binary case, such a uniform distribution has been proved in \cite{alberti2009uniform,nunzio2009uniform} to be preferred by the minimizers on a large length scale asymptotically. We expect analogous results to hold for the ternary case with certain choices of $[\gamma_{ij}]$ (see Conjecture \ref{uniform distribution conjecture}). In Section \ref{physical intuition of conditions}, we discuss in detail what conditions should be imposed on $[\gamma_{ij}]$ in order to ensure the driving force towards pointwise charge neutrality. In this regard, we can interpret the fine structures formed by block copolymers as the outcome of the competition between the interfacial tension and the principle of charge neutrality.

\section{Coefficient matrix of the long range term}
\label{conditions on [gamma_{ij}]}

The long range term of \eqref{definition of J_epsilon} or \eqref{definition of J} is given in various forms in the literature. Ren et al. formulated it using a $3\times3$ matrix $[\gamma_{ij}]$ in their earlier works \cite{ren2003triblock1,ren2003triblock2}, and then reduced it to a $2\times2$ matrix $[\tilde\gamma_{ij}]$ in their later works (e.g., \cite{ren2013double}) using the incompressibility condition $u_1+u_2+u_3=1$ or $\bm1_{\Omega_1}+\bm1_{\Omega_2}+\bm1_{\Omega_3}=1$. Although sometimes it is indeed more convenient to use the $2\times2$ matrix, in this work we stick to the $3\times3$ matrix, of which each entry has a clear meaning or intuitive interpretation, as we have seen from the charge analogy drawn in Section \ref{charge interpretation}. (This $3\times3$ version also inspired our proof of Proposition \ref{arrangement in the all nonpositive case}.) In Section \ref{existing choices of [gamma_ij]} we list the existing choices of $[\gamma_{ij}]$. In Section \ref{physical intuition of conditions} we impose some admissibility conditions on $[\gamma_{ij}]$, and prove that there exists a one-to-one correspondence between $[\tilde\gamma_{ij}]$ and those $[\gamma_{ij}]$ satisfying the admissibility conditions.

\subsection{Existing choices}
\label{existing choices of [gamma_ij]}

In the literature there exist several different choices of $[\gamma_{ij}]$ for \eqref{definition of J_epsilon} or \eqref{definition of J}. For each choice of $[\gamma_{ij}]$, there is a corresponding interaction strength matrix $[f_{ij}]$ (defined via the relation in Theorem \ref{equivalence between long range term and electrostatic potential energy}). In the following we let $a=\omega_1$, $b=\omega_2$ and $c=\omega_3$ for convenience.

\begin{itemize}
\item
In the original work \cite[Equations (2.23) and (A.7)]{nakazawa1993microphase}, Ohta et al. derived from mean field theory the free energy of \monomer{A}\monomer{B}\monomer{C} triblock copolymers with the following matrix
\begin{equation}
\label{Ohta's matrix}
[\gamma_{ij}]=\frac{3\gamma}{3-2(a\!+\!c)-(a\!-\!c)^2}
\begin{bmatrix}
 \frac{2b+2c}{a^2} & -\frac{2c+3b}{a b} & \frac{b}{a c} \\
 -\frac{2c+3b}{a b} & \frac{2+4b}{b^2} & -\frac{2a+3b}{b c} \\
 \frac{b}{a c} & -\frac{2a+3b}{b c} & \frac{2a+2b}{c^2} \\
\end{bmatrix},
\end{equation}
where $\gamma$ is a positive parameter related to the degree of polymerization (the total chain length). The corresponding interaction strength matrix is given by
\begin{equation}
\label{Ohta's matrix f_ij}
[f_{ij}]\sim
\begin{bmatrix}
 {2b\!+\!2c} & {-2c\!-\!3b} & {b} \\
 {-2c\!-\!3b} & {2\!+\!4b} & {-2a\!-\!3b} \\
 {b} & {-2a\!-\!3b} & {2a\!+\!2b} \\
\end{bmatrix},
\end{equation}
where the symbol $\sim$ denotes direct proportionality with a positive coefficient.

\item
In \cite[Equation (4.21)]{ren2003triblock1}, Ren and Wei re-derived the ternary O\textendash K free energy. The matrix that they obtained is of a symmetric form and given by
\begin{equation}
\label{Ren's matrix}
[\gamma_{ij}]=\frac{3\gamma}{4 (a b+a c+b c)}
\begin{bmatrix}
 \frac{b+c}{a^2} & -\frac{c}{a b} & -\frac{b}{a c} \\
 -\frac{c}{a b} & \frac{a+c}{b^2} & -\frac{a}{b c} \\
 -\frac{b}{a c} & -\frac{a}{b c} & \frac{a+b}{c^2} \\
\end{bmatrix},
\end{equation}
where $\gamma$ is a positive parameter controlling the length scale of microdomains. By symmetry we mean that the free energy is invariant to the permutation of the monomer types, i.e., it is the same for all three kinds of triblock copolymers (\monomer{A}\monomer{B}\monomer{C}, \monomer{A}\monomer{C}\monomer{B} and \monomer{B}\monomer{A}\monomer{C}), independent of the block sequence. (By contrast, in \eqref{Ohta's matrix} the parameter $b$ plays a different role from $a$ and $c$.) Accordingly, the $[f_{ij}]$ corresponding to \eqref{Ren's matrix} is also of a symmetric form:
\begin{equation}
\label{Ren's matrix f_ij}
[f_{ij}]\sim
\begin{bmatrix}
 {b\!+\!c} & -{c} & -{b} \\
 -{c} & {a\!+\!c} & -{a} \\
 -{b} & -{a} & {a\!+\!b} \\
\end{bmatrix}.
\end{equation}

\item
For the mixture of \monomer{A}\monomer{B} diblock copolymers and \monomer{C} homopolymers, Ohta et al. \cite[Equation (2.6)]{ohta1995dynamics} and Ren et al. \cite[Equation (3.27)]{choksi2005diblock} both proposed the following matrix,
\begin{equation}
\label{Blend's matrix}
[\gamma_{ij}]=\frac34\frac{\gamma}{a+b}
\begin{bmatrix}
 \frac{1}{a^2} & -\frac{1}{a b} & 0 \\
 -\frac{1}{a b} & \frac{1}{b^2} & 0 \\
 0 & 0 & 0 \\
\end{bmatrix},
\end{equation}
where $\gamma$ is a positive parameter. The corresponding $[f_{ij}]$ is an extension (by zero) of the classical Coulomb's law:
\begin{equation}
\label{Blend's matrix f_ij}
[f_{ij}]\sim
\begin{bmatrix}
 1 & -1 & 0 \\
 -1 & 1 & 0 \\
 0 & 0 & 0 \\
\end{bmatrix}.
\end{equation}

\item
In their later works on 2-D cases (e.g., \cite[Equation (1.1)]{ren2015double}), instead of using the derived matrix \eqref{Ren's matrix}, Ren et al. chose among the following general matrices:
\begin{equation}
\label{General matrix}
[\gamma_{ij}]=
\begin{bmatrix}
 1-a & -b \\
 -a & 1-b \\
 -a & -b
\end{bmatrix}
[\tilde\gamma_{ij}]
\begin{bmatrix}
 1-a & -a & -a \\
 -b & 1-b & -b
\end{bmatrix},
\end{equation}
where $[\tilde\gamma_{ij}]\in\mathbb R^{2\times2}$ is either positive definite (for triblock copolymers) or positive semi-definite with 0-eigenvector being $[a,b]^{\rm T}$ (for the mixture of diblock copolymers and homopolymers). In Section \ref{physical intuition of conditions}, we are able to show that the class \eqref{General matrix} contains \eqref{Ohta's matrix}, \eqref{Ren's matrix} and \eqref{Blend's matrix} as special cases, and actually consists of all the matrices that are admissible. This establishes the connection between Ren et al.'s choices \eqref{General matrix} and our framework of admissible matrices. Some other ranges of $[\tilde\gamma_{ij}]$ have also been considered in the literature. For example, \cite{ren2019stationary} requires $[\tilde\gamma_{ij}]$ to be a positive (but not necessarily positive definite) matrix. Note that \eqref{General matrix} is not in a symmetric form with respect to the permutation of $a$, $b$ and $c$, because it was obtained by eliminating $u_3$ using $u_3=1-u_1-u_2$. Corresponding to \eqref{General matrix}, we have
\begin{equation*}
[f_{ij}]\sim
\begin{bmatrix}
 1\!-\!a & -a \\
 -b & 1\!-\!b \\
 -c & -c
\end{bmatrix}
\begin{bmatrix}
 a & 0 \\
 0 & b
\end{bmatrix}
[\tilde\gamma_{ij}]
\begin{bmatrix}
 a & 0 \\
 0 & b
\end{bmatrix}
\begin{bmatrix}
 1\!-\!a & -b & -c \\
 -a & 1\!-\!b & -c
\end{bmatrix}.
\end{equation*}

\end{itemize}


\subsection{Admissibility conditions}
\label{physical intuition of conditions}

\begin{definition}
\label{facilitation of charge neutrality}
In terms of facilitating charge neutrality, $[f_{ij}]$ is said to be admissible if it satisfies the following three conditions
\begin{enumerate}[label=\protect\CircleAroundChar{\arabic*}]
\item
$\vec 1^{\rm T}[f_{ij}]\,\vec 1=0$,
\item
$\vec q^{\rm T}\,[f_{ij}]\,\vec q\geqslant0$, for any $\vec q$,
\item $[f_{ij}]$ is symmetric.
\end{enumerate}
\end{definition}

\begin{theorem}
\label{equivalent conditions for facilitation of charge neutrality}
The three conditions in Definition \ref{facilitation of charge neutrality} for $[f_{ij}]$ (or $[\gamma_{ij}]$, via the relation in Theorem \ref{equivalence between long range term and electrostatic potential energy}) are equivalent to the following two conditions:
\begin{align}
[f_{ij}]\,\vec 1=\vec 0\quad&\big(\ \text{or}\ \ 
[\gamma_{ij}]\,\vec\omega=\vec 0\ \big),\label{0-eigenvector condition}\\
[f_{ij}]\succcurlyeq0\quad&\big(\ \text{or}\ \ [\gamma_{ij}]\succcurlyeq0\ \big),\label{positive semi-definite condition}
\end{align}
where $\vec\omega=[\omega_1,\omega_2,\omega_3]^{\rm T}$. 
\end{theorem}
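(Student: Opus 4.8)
The plan is to show the two implications separately, working at the level of the symmetric-matrix formulation since condition \CircleAroundChar{3} is assumed throughout and the dictionary between $[f_{ij}]$ and $[\gamma_{ij}]$ is a congruence by the invertible diagonal matrix $\operatorname{diag}(\omega_1,\omega_2,\omega_3)$, which preserves symmetry, sign-definiteness, and sends $\vec 1$ to $\vec\omega$. So it suffices to prove the equivalence for $[f_{ij}]$; the parenthetical statements for $[\gamma_{ij}]$ then follow by applying $\operatorname{diag}(\omega_i)^{-1}$ on both sides.

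First I would prove that \CircleAroundChar{1}--\CircleAroundChar{3} imply \eqref{0-eigenvector condition}--\eqref{positive semi-definite condition}. Condition \eqref{positive semi-definite condition} is literally \CircleAroundChar{2} together with \CircleAroundChar{3} (a symmetric matrix with $\vec q^{\rm T}[f_{ij}]\vec q\ge 0$ for all $\vec q$ is by definition positive semi-definite). For \eqref{0-eigenvector condition}, the idea is that $\vec 1$ is a minimizer of the quadratic form $\vec q\mapsto \vec q^{\rm T}[f_{ij}]\vec q$: by \CircleAroundChar{2} the form is $\ge 0$ everywhere, and by \CircleAroundChar{1} it attains the value $0$ at $\vec q=\vec 1$. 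Hence $\vec 1$ is a global minimizer, so the gradient of the form vanishes there; since $[f_{ij}]$ is symmetric, that gradient is $2[f_{ij}]\vec 1$, giving $[f_{ij}]\vec 1=\vec 0$. (Equivalently, one can invoke that for a positive semi-definite matrix $\vec q^{\rm T}M\vec q=0$ forces $M\vec q=\vec 0$, via the Cauchy--Schwarz inequality $|\vec p^{\rm T}M\vec q|\le (\vec p^{\rm T}M\vec p)^{1/2}(\vec q^{\rm T}M\vec q)^{1/2}$ applied with $\vec q=\vec 1$.)

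Conversely, assume \eqref{0-eigenvector condition} and \eqref{positive semi-definite condition}. Condition \eqref{positive semi-definite condition} already encodes symmetry of $[f_{ij}]$ (positive semi-definiteness is a property of symmetric, or Hermitian, matrices; in this real setting we take $[f_{ij}]\succcurlyeq 0$ to mean symmetric with nonnegative spectrum), so \CircleAroundChar{3} holds, and then \CircleAroundChar{2} is immediate. Finally \CircleAroundChar{1} follows by writing $\vec 1^{\rm T}[f_{ij}]\vec 1=\vec 1^{\rm T}([f_{ij}]\vec 1)=\vec 1^{\rm T}\vec 0=0$.

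I do not anticipate a genuine obstacle here — the statement is essentially the standard fact that, for a positive semi-definite quadratic form, a vector realizing the value zero must lie in the kernel. The only point that needs a little care is making the two conventions match: one must be explicit that "$[f_{ij}]\succcurlyeq 0$" is understood to include symmetry, so that no information is lost when condition \CircleAroundChar{3} is absorbed into \eqref{positive semi-definite condition}; and one must not forget to transport everything through the diagonal congruence to recover the $[\gamma_{ij}]$ versions, noting in particular that $\operatorname{diag}(\omega_i)\vec 1=\vec\omega$ is exactly why the zero-eigenvector becomes $\vec\omega$ rather than $\vec 1$.
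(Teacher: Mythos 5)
Your proof is correct and is exactly the "straightforward" argument the paper alludes to without writing out: the only nontrivial step is that a positive semi-definite form vanishing at $\vec 1$ forces $\vec 1$ into the kernel, and you handle the transfer to $[\gamma_{ij}]$ via the diagonal congruence correctly. Nothing further is needed.
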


The proof is straightforward.

\begin{remark}
We provide some justification for the three conditions in Definition \ref{facilitation of charge neutrality}.
\begin{enumerate}[label=(\roman*)]
\item Condition \CircleAroundChar{1} means that within neutrally charged objects, there should be no net interaction, that is, any charge density field satisfying the pointwise charge neutrality condition is in equilibrium and thus has the same potential energy. See Lemma \ref{mathematical details of the conditions}-(i) for a mathematical description.

\item Condition \CircleAroundChar{2} is to ensure that neutrally charged objects are energetically most favorable, that is, any charge density field satisfying the pointwise charge neutrality condition has the lowest possible potential energy. See Lemma \ref{mathematical details of the conditions}-(ii) for mathematical details.

\item Condition \CircleAroundChar{3} means that the interaction strength matrix should be symmetric, in line with Newton's third law.

\end{enumerate}
\end{remark}

\begin{lemma}
\label{mathematical details of the conditions}
For the following statements we assume $\int_\Omega\vec\rho(\vec x)\dd{\vec x}=\vec 1$ in \eqref{total electrostatic potential energy}.
\begin{enumerate}[label=(\roman*)]

\item If $U(\vec\rho)$ is a constant for any $\vec\rho$ satisfying the pointwise charge neutrality condition, then $[f_{ij}]$ must satisfy Condition \CircleAroundChar{1} in Definition \ref{facilitation of charge neutrality}.

\item Further, if the above constant is not greater than $U(\vec\rho)$ for any $\vec\rho$, then $[f_{ij}]$ must satisfy Condition \CircleAroundChar{2}, provided that it satisfies Condition \CircleAroundChar{3}.

\end{enumerate}
\end{lemma}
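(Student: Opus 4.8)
The plan is to feed a couple of explicit admissible charge densities into $U$ and read off the two conclusions as elementary linear algebra, using the field representation recalled in the Remark following Theorem~\ref{equivalence between long range term and electrostatic potential energy}, namely $U(\vec\rho)=\tfrac12\sum_{i,j}f_{ij}\int_\Omega\vec E_i\cdot\vec E_j\dd{\vec x}$ with $\vec E_i=\nabla(G*\rho_i)$. The only analytic ingredient beyond bookkeeping is the strict positivity $\int_\Omega|\nabla(G*h)|^2\dd{\vec x}>0$ whenever $h$ is a non-constant function of zero mean — equivalently, the positive-definiteness of $G$ as a kernel on mean-zero densities (the squared $H^{-1}$/$(-\Delta)^{-1/2}$ norm), which is elementary in one dimension and standard in general; it follows from $-\Delta(G*h)=h$ together with $\int_\Omega G(\vec x,\vec y)\dd{\vec x}=0$ under periodic or Neumann conditions.

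For (i): if $\vec\rho$ satisfies the pointwise charge neutrality condition and $\int_\Omega\vec\rho=\vec1$, then $\vec\rho=\phi\,\vec1$ for a scalar density $\phi\geqslant0$ with $\int_\Omega\phi=1$, so every $\vec E_i$ equals the single field $\nabla(G*\phi)$ and $U(\phi\,\vec1)=\tfrac12\bigl(\vec1^{\rm T}[f_{ij}]\vec1\bigr)\int_\Omega|\nabla(G*\phi)|^2\dd{\vec x}$. The choice $\phi\equiv1/|\Omega|$ gives $U=0$, whereas any bounded non-constant $\phi\geqslant0$ of unit mass gives a strictly positive integral. If $U$ is the same constant over all such $\vec\rho$, comparing these two choices forces $\vec1^{\rm T}[f_{ij}]\vec1=0$, i.e.\ Condition~\CircleAroundChar{1}; along the way this also identifies the constant as $0$.

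For (ii): the constant in the hypothesis is the value of $U$ at the uniform density $\tfrac1{|\Omega|}\vec1$, which is $0$ because that density has vanishing fields $\vec E_i$. Now fix an arbitrary $\vec q\in\mathbb R^3$, pick a bounded non-constant mean-zero $g$ on $\Omega$, and set $\vec\rho:=\tfrac1{|\Omega|}\vec1+t\,g\,\vec q$; for $t>0$ small enough (depending on $\|g\|_\infty$ and $\vec q$) this is element-wise nonnegative, integrable, and still satisfies $\int_\Omega\vec\rho=\vec1$ since $g$ has zero mean. The constant part of each $\rho_i$ contributes nothing to $\vec E_i$, so $\vec E_i=t\,q_i\,\nabla(G*g)$ and $U(\vec\rho)=\tfrac12\,t^2\bigl(\vec q^{\rm T}[f_{ij}]\vec q\bigr)\int_\Omega|\nabla(G*g)|^2\dd{\vec x}$. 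The hypothesis gives $U(\vec\rho)\geqslant0$, and dividing by the strictly positive factor $\tfrac12 t^2\int_\Omega|\nabla(G*g)|^2\dd{\vec x}$ yields $\vec q^{\rm T}[f_{ij}]\vec q\geqslant0$; since $\vec q$ was arbitrary this is Condition~\CircleAroundChar{2}. Only the quadratic form of $[f_{ij}]$ is probed here, so the symmetry Condition~\CircleAroundChar{3} is exactly what promotes this to the matrix statement $[f_{ij}]\succcurlyeq0$ of Theorem~\ref{equivalent conditions for facilitation of charge neutrality}.

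The main (and essentially only) obstacle is making the test densities simultaneously legitimate and effective: they must remain element-wise nonnegative and mass-normalized — handled by keeping the perturbation amplitude $t$ small and $g$ mean-zero — and they must produce nonzero long-range energy, which is precisely where the strict positive-definiteness of $G$ on mean-zero densities enters. Once that is in hand, both parts collapse to the vanishing of cross terms (via $\int_\Omega G(\vec x,\vec y)\dd{\vec x}=0$) and to comparing two values of a scalar.
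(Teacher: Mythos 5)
Your proof is correct. Part (i) is essentially the paper's argument: both reduce $U$ on pointwise-neutral densities to $\tfrac12\bigl(\vec 1^{\rm T}[f_{ij}]\vec 1\bigr)$ times a scalar quadratic quantity that is zero for the uniform density and strictly positive otherwise, forcing $\vec 1^{\rm T}[f_{ij}]\vec 1=0$. Part (ii) takes a genuinely different and in fact cleaner route. The paper tests $U$ on the asymmetric superposition $\vec\rho=(\vec 1+\vec q)\eta+(\vec 1-\vec q)\phi$, which produces a cross term $\vec q^{\rm T}[f_{ij}]\vec 1\int_\Omega\int_\Omega\bigl(\eta(\vec x)\eta(\vec y)-\phi(\vec x)\phi(\vec y)\bigr)G(\vec x,\vec y)\dd{\vec x}\dd{\vec y}$ whose sign must then be controlled by the trick of replacing $\vec q$ with $-\vec q$; it also invokes Condition \CircleAroundChar{3} to collapse the bilinear expansion. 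Your test density $\tfrac1{|\Omega|}\vec 1+t\,g\,\vec q$ with $g$ mean-zero is the symmetric specialization of that family ($\eta-\phi\propto g$, $\eta+\phi$ constant), so the constant background contributes no field at all, the cross term vanishes identically, and $U$ reduces directly to $\tfrac{t^2}2\bigl(\vec q^{\rm T}[f_{ij}]\vec q\bigr)\|\nabla(G*g)\|_{L^2}^2$ with no sign adjustment needed — and, as a small bonus, without using Condition \CircleAroundChar{3} at all, since the single spatial factor multiplies the full quadratic form. The only ingredients you must (and do) supply are the nonnegativity of the perturbed density for small $t$ and the strict positivity of the $H^{-1}$-type norm on non-constant mean-zero data, both of which are handled correctly.
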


\begin{proof}
\begin{enumerate}[label=(\roman*)]

\item Let $\vec\rho(\vec x)=\vec 1\,\psi(\vec x)$ with $\psi\geqslant0$ and $\int_\Omega\psi(\vec x)\dd{\vec x}=1$, then we have
\begin{equation*}
U(\vec\rho)=\frac{\vec 1^{\rm T}[f_{ij}]\,\vec 1}2
\int_{\Omega}\int_{\Omega}\psi(\vec x)\,G(\vec x,\vec y)\,\psi(\vec y)\dd{\vec x}\dd{\vec y}.
\end{equation*}
To ensure that $U(\vec\rho)$ is constant, Condition \CircleAroundChar{1} in Definition \ref{facilitation of charge neutrality} must be satisfied. In this way $U(\vec\rho)=0$.

\item Consider the superposition of two charge density fields deviating from charge neutrality:
\begin{equation*}
\vec\rho(\vec x)=(\vec 1+\vec q)\,\eta(\vec x)+(\vec 1-\vec q)\,\phi(\vec x),
\end{equation*}
where $\eta,\,\phi\geqslant0$ with $\int_\Omega\eta(\vec x)\dd{\vec x}=\int_\Omega\phi(\vec x)\dd{\vec x}=1/2$ and $\|\vec q\|_{\infty}\leqslant1$. For such $\vec\rho$ we have
\begin{equation*}
\begin{aligned}
U(\vec\rho)&=
\begin{aligned}[t]
&\frac{(\vec 1+\vec q)^{\rm T}[f_{ij}]\,(\vec 1+\vec q)}2
\int_{\Omega}\int_{\Omega}\eta(\vec x)\,G(\vec x,\vec y)\,\eta(\vec y)\dd{\vec x}\dd{\vec y}+\\
&(\vec 1+\vec q)^{\rm T}[f_{ij}]\,(\vec 1-\vec q)
\int_{\Omega}\int_{\Omega}\eta(\vec x)\,G(\vec x,\vec y)\,\phi(\vec y)\dd{\vec x}\dd{\vec y}+\\
&\frac{(\vec 1-\vec q)^{\rm T}[f_{ij}]\,(\vec 1-\vec q)}2
\int_{\Omega}\int_{\Omega}\phi(\vec x)\,G(\vec x,\vec y)\,\phi(\vec y)\dd{\vec x}\dd{\vec y}
\end{aligned}\\
&=
\begin{aligned}[t]
&\frac{\vec q^{\rm T}[f_{ij}]\,\vec q}2\int_{\Omega}\int_{\Omega}(\eta\!-\!\phi)(\vec x)\,G(\vec x,\vec y)\,(\eta\!-\!\phi)(\vec y)\dd{\vec x}\dd{\vec y}+\\
&\vec q^{\rm T}[f_{ij}]\,\vec 1\int_{\Omega}\int_{\Omega}\!\big(\eta(\vec x)\eta(\vec y)\!-\!\phi(\vec x)\phi(\vec y)\big)G(\vec x,\vec y)\dd{\vec x}\dd{\vec y}.
\end{aligned}
\end{aligned}
\end{equation*}
Since $G$ is positive semi-definite, we can find $\eta$ and $\phi$ such that the double integral in the second to last summand is positive. If Condition \CircleAroundChar{2} in Definition \ref{facilitation of charge neutrality} is not satisfied, then we can choose $\vec q$ such that $\vec q^{\rm T}[f_{ij}]\,\vec q<0$ with the last summand being nonpositive (otherwise replace $\vec q$ by $-\vec q$), so $U(\vec\rho)$ would be negative which is undesired.
\end{enumerate}
\end{proof}

\begin{remark}$ $
\label{remark on the incompressibility condition}
\begin{enumerate}[label=(\roman*)]

\item Note that our derivation in Lemma \ref{mathematical details of the conditions} does not incorporate the incompressibility condition $\vec u^{\rm T}\vec 1=1$ (or equivalently $\vec\rho^{\rm T}\vec\omega=1$). Although we can derive the same results under the incompressibility condition, the derivation is less intuitive, as shown in Appendix \ref{Alternative derivation}. Note that the proof of Lemma \ref{mathematical details of the conditions} only requires $G$ to be positive semi-definite, and therefore can be generalized to other kernels.

\item Our discussion here may help us understand the numerical results presented in \cite[Figure 4.4 (e)]{wang2019bubble} and \cite[Figures 5 and 8]{ren2019stationary} for indefinite $[\gamma_{ij}]$, where droplets formed by fluids of two different minority types are fully separated into macroscopic domains. As we can see from the proof of Lemma \ref{mathematical details of the conditions}-(ii), when $[f_{ij}]$ is indefinite we can take a nonzero $\vec q$ such that $\vec q^{\rm T}[f_{ij}]\,\vec q<0$ (with $\vec q^{\rm T}\vec\omega=0$ due to incompressibility), and take $\eta$ and $\phi$ to be the indicator functions of the macroscopic domains so that $\int_{\Omega}\int_{\Omega}(\eta\!-\!\phi)(\vec x)\,G(\vec x,\vec y)\,(\eta\!-\!\phi)(\vec y)\dd{\vec x}\dd{\vec y}$ is large and thus the free energy is low. Such macroscopic segregation may be of some interest per se, but is undesirable at least in the triblock copolymer context, where \monomer{A}, \monomer{B} and \monomer{C} subchains are connected by covalent bonds. Therefore we require $[f_{ij}]\succcurlyeq0$ to ensure that charge neutrality is preferred by the long range term.
\end{enumerate}
\end{remark}

\begin{remark}$ $
\label{one-to-one correspondence under the conditions}
\begin{enumerate}[label=(\roman*)]
\item Under the incompressibility condition $\vec u^{\rm T}\vec 1=1$ or $\sum_i\bm1_{\Omega_i}=1$, different choices of $[\gamma_{ij}]$ may yield the same long range term \eqref{long range term} or \eqref{general framework}. However, according to Proposition \ref{one choice satisfies the conditions}, among those equivalent choices, only one satisfies \eqref{0-eigenvector condition}. Therefore the choices of $[\gamma_{ij}]$ are unique under the conditions in Theorem \ref{equivalent conditions for facilitation of charge neutrality}.

\item The choice \eqref{General matrix} comprises all the admissible matrices via a one-to-one correspondence between $[\gamma_{ij}]$ and $[\tilde\gamma_{ij}]$. See Proposition \ref{one choice satisfies the conditions} and its proof for details.
\end{enumerate}
\end{remark}

In the binary case, the minimizer of \eqref{definition of I_epsilon} or \eqref{definition of I} asymptotically has uniform energy and density distribution on the macroscopic scale \cite{alberti2009uniform,nunzio2009uniform}, with the characteristic domain size being $\gamma^{-1/3}$. This is because of the scaling properties of the short and long range terms \cite[Equation (14)]{muratov2002theory}, with the latter prevailing over the former on the macroscopic level and favoring charge neutrality. We believe that one can prove analogous results for non-degenerate ternary (also quaternary, quinary, etc.) systems. To be more precise, we present the following conjecture.

\begin{conjecture}
\label{uniform distribution conjecture}
 If the matrix $[\gamma_{ij}]$ satisfies the conditions in Theorem \ref{equivalent conditions for facilitation of charge neutrality} and is of nullity 1, with $\gamma>0$ being its overall factor (i.e., we fix $[\gamma_{ij}]/\gamma$), then there exist $C,\gamma^*>0$ such that for any $\gamma\geqslant\gamma^*$ and $l\geqslant\sqrt[3]{\gamma^*}$, we have
\begin{equation*}
\bigg\|\fint_{Q\big(\vec x,\,l/\!\sqrt[\raisebox{1.5pt}{\fontsize{7pt}{0pt}\selectfont\text{$3$}}]\gamma\big)}\vec u(\vec y;\gamma)\dd{\vec y}-\vec\omega\bigg\|\leqslant\frac Cl,\;\;\text{for any}\;\;Q\big(\vec x,\,l/\!\sqrt[\raisebox{1.5pt}{\fontsize{7pt}{0pt}\selectfont\text{$3$}}]\gamma\big)\subseteq\Omega,
\end{equation*}
where $\vec u(\;\cdot\;\,;\gamma)$ is the minimizer of \eqref{definition of J_epsilon}, and $Q(\vec x,l)$ denotes the cube centered at $\vec x$ with edge length $l$. Analogous conclusions hold for the sharp interface limit \eqref{definition of J}.
\end{conjecture}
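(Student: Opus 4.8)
The statement is an open conjecture, so I can only outline a plausible route; the natural one is to transplant the uniform–distribution machinery of Alberti--Choksi--Otto \cite{alberti2009uniform} (see also \cite{nunzio2009uniform}) from the binary model \eqref{definition of I} to the ternary (indeed multiphase) model \eqref{definition of J}, using the charge algebra of Theorem \ref{equivalent conditions for facilitation of charge neutrality} to replace the scalar sign structure of the binary case by a spectral one. \emph{Reduction and scaling.} First I would rescale $\vec x=\gamma^{-1/3}\vec y$, so that a cube of edge $l/\!\sqrt[3]{\gamma}$ becomes a cube of edge $l$ and \eqref{definition of J} becomes $\gamma^{1/3}$ times a functional in which the perimeter and the long-range term carry fixed, comparable weights (the scaling recorded in \cite[Eq.~(14)]{muratov2002theory}). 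It then suffices to show that the rescaled minimizer satisfies $\|\fint_{Q}\vec u-\vec\omega\|\le C/l$ on every cube $Q$ of edge $l\ge1$ inside the rescaled domain, with $C$ depending only on $[\gamma_{ij}]/\gamma$, $\vec\omega$ and $d$. The diffuse-interface claim for \eqref{definition of J_epsilon} should then follow from the same argument together with the standard uniform comparison between the Modica--Mortola energy and the perimeter, or by first sending $\epsilon\to0$.

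\emph{Spectral form of the long-range term.} Since $[f_{ij}]$ (equivalently $[\gamma_{ij}]$) obeys \eqref{0-eigenvector condition}--\eqref{positive semi-definite condition} and has nullity exactly $1$, diagonalize $[f_{ij}]=\lambda_1\vec v_1\vec v_1^{\rm T}+\lambda_2\vec v_2\vec v_2^{\rm T}$ with $0<\lambda_1\le\lambda_2$ and $\{\vec v_1,\vec v_2\}$ an orthonormal basis of $\vec 1^{\perp}$. By \eqref{L^2 norm of electrostatic field} the long-range term equals $\sum_{k}\lambda_k\|\vec v_k\cdot\vec\rho\|_{\dot H^{-1}}^{2}\ge\lambda_1\|P\vec\rho\|_{\dot H^{-1}}^{2}$, where $P$ is the orthogonal projection onto $\vec 1^{\perp}$ and $\vec\rho$ is the charge density of Theorem \ref{equivalence between long range term and electrostatic potential energy}; note $P\vec\rho$ has zero global mean by overall charge neutrality. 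A local deviation of $\fint_{Q}\vec u$ from $\vec\omega$ corresponds to a nonzero $P\vec\rho$-component there, so it costs long-range energy at rate $\lambda_1>0$ — this is exactly where nullity $1$ enters.

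\emph{Equidistribution and the interpolation inequality.} The heart of the matter is to prove that the minimal energy is equidistributed: there are constants $0<c\le C$ such that the minimizer's energy on $Q$ lies between $c\,l^{d}$ and $C\,l^{d}$ for every cube $Q$ of edge $l\ge1$ in the domain. The upper bound on the total energy comes from an explicit periodic competitor (periodic lamellae, or a periodic extension of a $1$-D pattern as in \eqref{free energy of ABC}, of unit period in rescaled units, using only $[\gamma_{ij}]\succcurlyeq0$); the per-cube two-sided bounds come from a covering/redistribution argument as in \cite{alberti2009uniform}: a cube carrying too much energy can be replaced by a locally charge-neutral configuration of energy $O(l^{d})$ matched to the exterior across $\partial Q$ without altering the three volume fractions, contradicting minimality, while a cube carrying too little energy forces its neighbours to absorb the deficit. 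Given equidistribution, apply on dyadic cubes the $L^{1}$--$BV$--$\dot H^{-1}$ interpolation inequality of \cite{alberti2009uniform} to the (componentwise) field $\vec g:=P\vec\rho$: its $BV$ seminorm on $Q$ is controlled by the perimeter (short-range) energy on $Q$, its $\dot H^{-1}$ seminorm on $Q$ by the long-range energy on $Q$ (after the localization below), and both are of order $l^{d}$; interpolating and summing over dyadic scales then gives $\|\fint_{Q}\vec u-\vec\omega\|\lesssim\|\fint_{Q}P\vec\rho\|\le C/l$.

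\emph{Main obstacle.} The genuinely non-routine point is the localization/screening used inside the equidistribution step: the $\dot H^{-1}$-density of the long-range term does not localize to a cube, so one must build a correction supported in a thin shell around $Q$ that cancels the far field while preserving \emph{all three} volume constraints $|\Omega_i|=\omega_i|\Omega|$ simultaneously and keeping the added perimeter of order $l^{d-1}$. Carrying out this multi-constraint screening, together with the vector/matrix-weighted $\dot H^{-1}$ bookkeeping forced by the decomposition above, is where the ternary (multiphase) problem departs substantively from the binary case of \cite{alberti2009uniform}; I expect this, rather than the interpolation or the spectral algebra, to be the real work. The hypothesis of nullity exactly $1$ is then seen to be essential: if the nullity exceeds $1$ then $\lambda_1=0$, the lower bound in the second paragraph collapses, and indeed the degenerate blend matrix \eqref{Blend's matrix} permits the macrophase separation that the conjecture would otherwise rule out.
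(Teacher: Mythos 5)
The statement you are addressing is stated in the paper as a \emph{conjecture}; the paper offers no proof of it, only the motivating remark immediately preceding it that the binary analogue is known from \cite{alberti2009uniform,nunzio2009uniform} together with the scaling in \cite[Equation (14)]{muratov2002theory}. There is therefore no argument of the authors' to compare yours against, and your proposal should be judged as a strategy for an open problem rather than as a proof. On that footing: your route is exactly the one the authors gesture at, and the parts of your sketch that can be checked are sound. The rescaling $\vec x=\gamma^{-1/3}\vec y$ correctly normalizes the two terms; the spectral reduction is correct, since \eqref{0-eigenvector condition}, \eqref{positive semi-definite condition} and nullity $1$ together say that $[f_{ij}]$ restricted to $\vec 1^{\perp}$ is positive definite, so via \eqref{L^2 norm of electrostatic field} the long-range term controls $\lambda_1\|P\vec\rho\|_{\dot H^{-1}}^2$ from below; and the passage from $\|P(\fint_Q\vec\rho-\vec 1)\|$ back to $\|\fint_Q\vec u-\vec\omega\|$ works because $P$ is injective on $\vec\omega^{\perp}$ (a vector parallel to $\vec 1$ and orthogonal to $\vec\omega$ vanishes, as $\vec 1^{\rm T}\vec\omega=1$). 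Your observation that nullity $1$ is exactly what excludes the degenerate blend case \eqref{Blend's matrix} is also the right reading of the hypothesis.

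The genuine gap is the one you yourself flag: the equidistribution step. Everything downstream of it (the localized $L^1$--$BV$--$\dot H^{-1}$ interpolation, the dyadic summation) is a transcription of \cite{alberti2009uniform}, but the two-sided per-cube energy bound requires constructing, inside an arbitrary cube, a competitor that (a) matches the exterior configuration across $\partial Q$ with added perimeter $O(l^{d-1})$, (b) screens the far field in the nonlocal term, and (c) preserves the two independent volume constraints of the ternary problem simultaneously. In the binary case the single volume constraint can be restored by a small translation of one interface; with three phases the corresponding correction must be performed in a two-parameter family without destroying either the perimeter bound or the charge-neutrality of the patch, and it must be compatible with whichever of the $c_{ij}$ is smallest (the nearly degenerate regimes $c_{12}\ll c_{13}\approx c_{23}$ that the paper shows produce qualitatively different minimizers). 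None of this is carried out in your proposal, and it is precisely the content that makes the statement a conjecture rather than a theorem. So: correct identification of the strategy and of where the difficulty lies, but not a proof, and it could not honestly be presented as one.
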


\subsection{Decomposition of the interaction strength matrix}
\label{A possible underlying mechanism}
\begin{proposition}
\label{existing choices are positive semi-definite}
The matrices in \eqref{Ohta's matrix}, \eqref{Ren's matrix} and \eqref{Blend's matrix} are admissible, and are special cases of \eqref{General matrix}.
\end{proposition}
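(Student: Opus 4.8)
The plan is to check that each of the three matrices satisfies the admissibility conditions of Definition~\ref{facilitation of charge neutrality} and then to locate it inside the family \eqref{General matrix}. By Theorem~\ref{equivalent conditions for facilitation of charge neutrality}, admissibility of $[\gamma_{ij}]$ means that it is symmetric, that $[\gamma_{ij}]\vec\omega=\vec0$, and that $[\gamma_{ij}]\succcurlyeq0$; via the congruence in Theorem~\ref{equivalence between long range term and electrostatic potential energy} these are equivalent to the same three statements for $[f_{ij}]$ (with $\vec\omega$ replaced by $\vec1$), so I would work throughout with the simpler representatives \eqref{Ohta's matrix f_ij}, \eqref{Ren's matrix f_ij}, \eqref{Blend's matrix f_ij}. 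Symmetry is visible by inspection. The condition $[f_{ij}]\vec1=\vec0$ says every row of $[f_{ij}]$ sums to zero, which I would verify by direct addition: it holds identically for \eqref{Ren's matrix f_ij} and \eqref{Blend's matrix f_ij}, and for \eqref{Ohta's matrix f_ij} it uses $a+b+c=1$ only in the middle row, whose entries sum to $2-2(a+b+c)$.

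The one substantive point is positive semidefiniteness, and the key tool is that a symmetric matrix with vanishing row sums decomposes as $[f_{ij}]=\sum_{i<j}(-f_{ij})\,(\vec e_i-\vec e_j)(\vec e_i-\vec e_j)^{\rm T}$, i.e.\ $\vec q^{\rm T}[f_{ij}]\vec q=\sum_{i<j}(-f_{ij})(q_i-q_j)^2$. For \eqref{Ren's matrix f_ij} the three weights $-f_{12}\sim c$, $-f_{13}\sim b$, $-f_{23}\sim a$ are all positive, so $[f_{ij}]^{\mathrm{Ren}}\succcurlyeq0$ on sight, with kernel $\mathbb R\vec1$; for \eqref{Blend's matrix f_ij} only $-f_{12}\sim1$ survives, giving $[f_{ij}]^{\mathrm{Blend}}\sim(\vec e_1-\vec e_2)(\vec e_1-\vec e_2)^{\rm T}\succcurlyeq0$. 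Ohta's matrix is the hard case: in \eqref{Ohta's matrix f_ij} one has $f_{13}\sim b>0$, so the weight $-f_{13}\sim-b$ is \emph{negative}, $[f_{ij}]^{\mathrm{Ohta}}$ is not a weighted graph Laplacian, and the decomposition is no longer a visible sum of squares. I would handle it by passing to the difference coordinates $s=q_1-q_2$, $t=q_2-q_3$ (so $q_1-q_3=s+t$, and $\vec q\mapsto(s,t)$ has kernel $\mathbb R\vec1$): the quadratic form becomes, up to a positive factor, $(b+c)s^2-b\,st+(a+b)t^2$, whose $2\times2$ matrix has determinant $(b+c)(a+b)-\tfrac14 b^2>0$ because $(b+c)(a+b)\ge b^2$. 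Hence this binary form is positive definite and $[f_{ij}]^{\mathrm{Ohta}}\succcurlyeq0$ with a one-dimensional kernel. That single scalar inequality, $4(b+c)(a+b)>b^2$, is the crux.

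It remains to place the three matrices inside \eqref{General matrix}. Since \eqref{General matrix} is known (Remark~\ref{one-to-one correspondence under the conditions}) to be in one-to-one correspondence with the admissible matrices via a symmetric $2\times2$ factor $[\tilde\gamma_{ij}]$, admissibility already gives membership; if an explicit $[\tilde\gamma_{ij}]$ is wanted, one substitutes $u_3=1-u_1-u_2$ into $\vec u^{\rm T}[\gamma_{ij}]\vec u$ and takes the coefficient matrix of the homogeneous quadratic part in $(u_1,u_2)$ — legitimate because $\int_\Omega G(\vec x,\vec y)\dd{\vec x}=0$ annihilates the constant and linear remainders, exactly as in the proof of Theorem~\ref{equivalence between long range term and electrostatic potential energy} — and one checks it is positive definite for \eqref{Ohta's matrix} and \eqref{Ren's matrix} and rank one with $0$-eigenvector $[a,b]^{\rm T}$ for \eqref{Blend's matrix}, the two cases allowed in \eqref{General matrix}. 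In short, the only genuinely delicate step is the positive semidefiniteness of Ohta's matrix: because its interaction-strength form has a positive \monomer{A}\textendash\monomer{C} entry it is neither a graph Laplacian nor a visible sum of rank-one squares, so $\succcurlyeq0$ has to be extracted through the change of variables above; everything else is routine bookkeeping with $a+b+c=1$.
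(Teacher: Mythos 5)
Your proposal is correct and follows essentially the same route as the paper: both reduce admissibility to the three conditions on the representatives \eqref{Ohta's matrix f_ij}, \eqref{Ren's matrix f_ij}, \eqref{Blend's matrix f_ij}, both read off positive semidefiniteness of Ren's and Blend's matrices from the weighted graph-Laplacian decomposition with nonnegative weights, and both obtain membership in \eqref{General matrix} from the one-to-one correspondence in Remark~\ref{one-to-one correspondence under the conditions}-(ii). The only divergence is the one nontrivial step, positive semidefiniteness of Ohta's matrix: you pass to the difference coordinates $s=q_1\!-\!q_2$, $t=q_2\!-\!q_3$ and check that the reduced binary form $(b\!+\!c)s^2-b\,st+(a\!+\!b)t^2$ has positive discriminant (in effect the method of Proposition~\ref{sufficient and necessary condition for [f_{ij}] to be positive semi-definite} specialized to this case), whereas the paper absorbs the negative term directly via $(\zeta\!-\!\mu)^2\leqslant2(\zeta\!-\!\theta)^2+2(\theta\!-\!\mu)^2$; both are elementary and give the same conclusion, including the one-dimensional kernel $\mathbb{R}\vec1$.
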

\begin{proof}
We can decompose the right-hand side of \eqref{Ren's matrix f_ij} into the sum of three simple matrices
\begin{equation*}
\begin{bmatrix}
  {c} & -{c} & 0 \\
 -{c} & {c} & 0 \\
    0 & 0 & 0 \\
\end{bmatrix}
+
\begin{bmatrix}
 {b} & 0 & -{b} \\
 0 & 0 & 0 \\
 -{b} & 0 & {b} \\
\end{bmatrix}
+
\begin{bmatrix}
 {0} & 0 & {0} \\
 0 & {a} & -{a} \\
 {0} & -{a} & {a} \\
\end{bmatrix},
\end{equation*}
from which it is clear that $[f_{ij}]$ is positive semi-definite with 0-eigenvector being $[1,1,1]^{\rm T}$. Analogously the right-hand side of \eqref{Ohta's matrix f_ij} equals
\begin{equation*}
(2c\!+\!3b)
\begin{bmatrix}
 {1} & {-1} & 0 \\
 {-1} & {1} & 0 \\
 0 & 0 & 0 \\
\end{bmatrix}
+
\begin{bmatrix}
 {-b} & 0 & {b} \\
 0 & 0 & 0 \\
 {b} & 0 & {-b} \\
\end{bmatrix}
+(2a\!+\!3b)
\begin{bmatrix}
 0 & 0 & 0 \\
 0 & {1} & {-1} \\
 0 & {-1} & {1} \\
\end{bmatrix},
\end{equation*}
indicating that
\begin{equation*}
\begin{aligned}
&\;[\zeta, \theta, \mu]\,[f_{ij}]\,[\zeta, \theta, \mu]^{\rm T}\\
\sim&\;(2c\!+\!3b)(\zeta\!-\!\theta)^2-b(\zeta\!-\!\mu)^2+(2a\!+\!3b)(\theta\!-\!\mu)^2\\
=&\;(2c\!+\!3b)(\zeta\!-\!\theta)^2-b(\zeta\!-\!\theta\!+\!\theta\!-\!\mu)^2+(2a\!+\!3b)(\theta\!-\!\mu)^2\\
\geqslant&\;(2c\!+\!3b)(\zeta\!-\!\theta)^2-2b(\zeta\!-\!\theta)^2-2b(\theta\!-\!\mu)^2+(2a\!+\!3b)(\theta\!-\!\mu)^2\\
\geqslant&\;0,
\end{aligned}
\end{equation*}
where the inequalities become equalities when $\zeta=\theta=\mu$. Therefore \eqref{Ohta's matrix f_ij} is also positive semi-definite with 0-eigenvector being $[1,1,1]^{\rm T}$. By Theorem \ref{equivalent conditions for facilitation of charge neutrality}, we know that \eqref{Ohta's matrix}, \eqref{Ren's matrix} and \eqref{Blend's matrix} are all admissible. According to Remark \ref{one-to-one correspondence under the conditions}-(ii), they are all special cases of \eqref{General matrix}.
\end{proof}

\begin{remark}
\label{microscopic physical picture}

As we can see from Proposition \ref{sufficient and necessary condition for [f_{ij}] to be positive semi-definite}, admissible $[f_{ij}]$ has 3 degrees of freedom. However, if we are only concerned with the relative interaction strengths, then its degrees of freedom can be reduced to 2 (e.g., by imposing $f_{12}^2\!+\!f_{13}^2\!+\!f_{23}^2=1$ on nonzero $[f_{ij}]$), corresponding to a spherical cap shown in Figure \ref{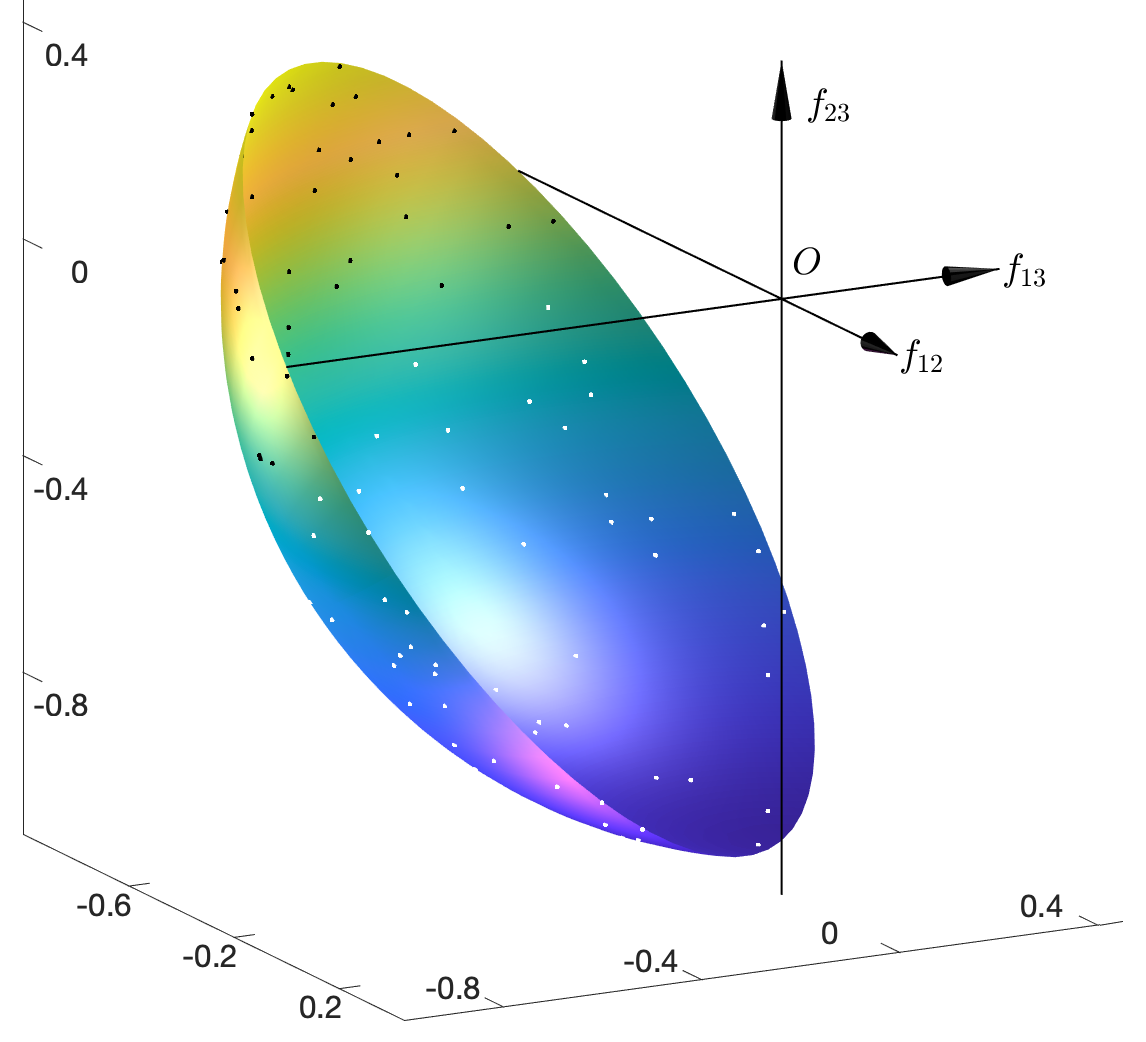}.
\end{remark}

\begin{proposition}
\label{sufficient and necessary condition for [f_{ij}] to be positive semi-definite}
Any $[f_{ij}]$ satisfying \eqref{0-eigenvector condition} can be written as
\begin{equation}
\label{decomposition of [f_ij]}
f_{12}
\begin{bmatrix}
 -1 & 1 & 0 \\
 1 & -1 & 0 \\
    0 & 0 & 0 \\
\end{bmatrix}
+f_{13}
\begin{bmatrix}
 -1 & 0 & 1 \\
 0 & 0 & 0 \\
 1 & 0 & -1 \\
\end{bmatrix}
+f_{23}
\begin{bmatrix}
 {0} & 0 & {0} \\
 0 & -1 & 1 \\
 {0} & 1 & -1 \\
\end{bmatrix},
\end{equation}
which is positive semi-definite if and only if $f_{12}\!+\!f_{13}\!+\!f_{23}\leqslant-\sqrt{f_{12}^2\!+\!f_{13}^2\!+\!f_{23}^2}$.
\end{proposition}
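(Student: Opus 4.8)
The plan is to separate the statement into the algebraic representation \eqref{decomposition of [f_ij]} and the spectral criterion, and to treat the latter via the pair of eigenvalues of $[f_{ij}]$ other than the one forced to be $0$.

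First I would verify the representation. Since $[f_{ij}]$ is symmetric and $[f_{ij}]\vec 1=\vec 0$, every row (hence every column) sums to zero, so the diagonal entries are forced: $f_{ii}=-\sum_{j\neq i}f_{ij}$. Writing out the matrix with these diagonal entries and grouping the terms proportional to $f_{12}$, $f_{13}$ and $f_{23}$ yields exactly the three building blocks in \eqref{decomposition of [f_ij]}. Equivalently, one may record that the quadratic form of the right-hand side of \eqref{decomposition of [f_ij]} is $-f_{12}(q_1-q_2)^2-f_{13}(q_1-q_3)^2-f_{23}(q_2-q_3)^2$, which makes both the symmetry and the fact that $\vec 1$ is a null vector transparent. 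There is no obstacle here.

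For the criterion, the key point is that $\vec 1$ is always a null vector, so $[f_{ij}]$ has eigenvalue $0$ together with two further real eigenvalues $\lambda_1,\lambda_2$; thus $[f_{ij}]\succcurlyeq 0$ if and only if $\lambda_1\geqslant 0$ and $\lambda_2\geqslant 0$, and, there being only two of them, this is equivalent to $\lambda_1+\lambda_2\geqslant 0$ together with $\lambda_1\lambda_2\geqslant 0$. Using $f_{ii}=-\sum_{j\neq i}f_{ij}$ one gets $\lambda_1+\lambda_2=\operatorname{tr}[f_{ij}]=-2(f_{12}+f_{13}+f_{23})$, while $\lambda_1\lambda_2$ is the second elementary symmetric function of the eigenvalues, i.e.\ the sum of the three $2\times 2$ principal minors of $[f_{ij}]$, which a short computation shows equals $3(f_{12}f_{13}+f_{12}f_{23}+f_{13}f_{23})$. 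Hence $[f_{ij}]\succcurlyeq 0$ is equivalent to the pair of inequalities $f_{12}+f_{13}+f_{23}\leqslant 0$ and $f_{12}f_{13}+f_{12}f_{23}+f_{13}f_{23}\geqslant 0$.

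Finally I would collapse this pair into the single claimed inequality via $(f_{12}+f_{13}+f_{23})^2=f_{12}^2+f_{13}^2+f_{23}^2+2(f_{12}f_{13}+f_{12}f_{23}+f_{13}f_{23})$: the second inequality says $(f_{12}+f_{13}+f_{23})^2\geqslant f_{12}^2+f_{13}^2+f_{23}^2$, and together with $f_{12}+f_{13}+f_{23}\leqslant 0$ this is exactly $-(f_{12}+f_{13}+f_{23})\geqslant\sqrt{f_{12}^2+f_{13}^2+f_{23}^2}$; the reverse implication is immediate. The only steps needing a little care are the evaluation of the principal-minor sum (the pleasant cancellation leaving $3(f_{12}f_{13}+f_{12}f_{23}+f_{13}f_{23})$) and the observation that reducing ``$\lambda_1,\lambda_2\geqslant 0$'' to a trace-and-minor test is legitimate precisely because the kernel is one-dimensional; attempting instead to read sign conditions directly off the binary form $-f_{12}x^2-f_{13}(x+z)^2-f_{23}z^2$ would force an extra pointwise inequality that the eigenvalue route sidesteps.
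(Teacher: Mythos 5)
Your proof is correct. It differs from the paper's in mechanics, though both ultimately reduce positive semi-definiteness to the same pair of scalar conditions and combine them via the identity $(f_{12}+f_{13}+f_{23})^2=f_{12}^2+f_{13}^2+f_{23}^2+2(f_{12}f_{13}+f_{12}f_{23}+f_{13}f_{23})$. The paper substitutes $\alpha=\theta-\zeta$, $\beta=\zeta-\mu$ into the quadratic form $-f_{12}(\zeta-\theta)^2-f_{13}(\zeta-\mu)^2-f_{23}(\theta-\mu)^2$ and requires the resulting $2\times2$ matrix $\begin{bmatrix} f_{12}+f_{23} & f_{23}\\ f_{23} & f_{13}+f_{23}\end{bmatrix}$ to be negative semi-definite; its determinant gives $|f_{12}+f_{13}+f_{23}|\geqslant\sqrt{f_{12}^2+f_{13}^2+f_{23}^2}$ and its trace $f_{12}+f_{13}+2f_{23}\leqslant 0$, after which an extra observation ($|f_{23}|\leqslant\sqrt{f_{12}^2+f_{13}^2+f_{23}^2}$) is needed to pin down the sign. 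You instead work with the spectrum of the full $3\times3$ matrix: the kernel contains $\vec 1$, so positivity of the remaining two eigenvalues is equivalent to $\operatorname{tr}[f_{ij}]=-2(f_{12}+f_{13}+f_{23})\geqslant 0$ together with nonnegativity of the sum of principal $2\times2$ minors, $3(f_{12}f_{13}+f_{12}f_{23}+f_{13}f_{23})\geqslant 0$. Your route resolves the sign of $f_{12}+f_{13}+f_{23}$ directly from the trace, avoiding the paper's final auxiliary step, and you additionally verify the representation \eqref{decomposition of [f_ij]} itself (the diagonal entries being forced by the row sums), which the paper takes for granted. One caveat worth keeping explicit, as you do: the existence and uniqueness of the decomposition uses the symmetry of $[f_{ij}]$, not just $[f_{ij}]\vec 1=\vec 0$, which is implicit in the paper's notation.
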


\begin{proof}
For $[f_{ij}]$ given by \eqref{decomposition of [f_ij]}, we have
\begin{equation*}
\begin{aligned}
\null[\zeta, \theta, \mu]\,[f_{ij}]\,[\zeta, \theta, \mu]^{\rm T}=&-f_{12}(\zeta\!-\!\theta)^2-f_{13}(\zeta\!-\!\mu)^2-f_{23}(\theta\!-\!\mu)^2\\
=&-f_{12}\alpha^2-f_{13}\beta^2-f_{23}(\alpha\!+\!\beta)^2\\
=&-
\begin{bmatrix}
\alpha & \beta
\end{bmatrix}
\begin{bmatrix}
f_{12}+f_{23} & f_{23}\\
f_{23} & f_{13}+f_{23}
\end{bmatrix}
\begin{bmatrix}
\alpha\\
\beta
\end{bmatrix}
,
\end{aligned}
\end{equation*}
where $\alpha=\theta\!-\!\zeta$ and $\beta=\zeta\!-\!\mu$. For $[f_{ij}]$ to be positive semi-definite, we require the $2\times2$ matrix in the last line to be negative semi-definite, whose determinant is
\begin{equation*}
f_{12}f_{13}\!+\!f_{12}f_{23}\!+\!f_{13}f_{23}=\frac{(f_{12}\!+\!f_{13}\!+\!f_{23})^2\!-\!f_{12}^2\!-\!f_{13}^2\!-\!f_{23}^2}2,
\end{equation*}
which is required to be nonnegative. In other words, we require $|f_{12}\!+\!f_{13}\!+\!f_{23}|\geqslant\sqrt{f_{12}^2\!+\!f_{13}^2\!+\!f_{23}^2}$. Its trace $f_{12}\!+\!f_{13}\!+\!2f_{23}$ is required to be nonpositive, therefore we require $f_{12}\!+\!f_{13}\!+\!f_{23}\leqslant-\sqrt{f_{12}^2\!+\!f_{13}^2\!+\!f_{23}^2}$ since $|f_{23}|\leqslant\sqrt{f_{12}^2\!+\!f_{13}^2\!+\!f_{23}^2}$.
\end{proof}

\begin{corollary}
If the matrix \eqref{decomposition of [f_ij]} is positive semi-definite, then no more than one of $f_{12}$, $f_{13}$ and $f_{23}$ is positive.
\end{corollary}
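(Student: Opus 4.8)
The plan is to read the conclusion off directly from positive semi-definiteness by testing the associated quadratic form on three well-chosen vectors. Recall from the proof of Proposition~\ref{sufficient and necessary condition for [f_{ij}] to be positive semi-definite} that, for the matrix in \eqref{decomposition of [f_ij]},
\[
[\zeta,\theta,\mu]\,[f_{ij}]\,[\zeta,\theta,\mu]^{\rm T}=-f_{12}(\zeta\!-\!\theta)^2-f_{13}(\zeta\!-\!\mu)^2-f_{23}(\theta\!-\!\mu)^2 .
\]
First I would substitute $[\zeta,\theta,\mu]=[0,1,1]$, $[1,0,1]$ and $[1,1,0]$ in turn. Each choice annihilates exactly one of the three squared differences, so the quadratic form takes the values $-(f_{12}+f_{13})$, $-(f_{12}+f_{23})$ and $-(f_{13}+f_{23})$, respectively.

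Since the matrix \eqref{decomposition of [f_ij]} is assumed positive semi-definite, each of those three values is nonnegative, which gives
\[
f_{12}+f_{13}\leqslant0,\qquad f_{12}+f_{23}\leqslant0,\qquad f_{13}+f_{23}\leqslant0;
\]
in words, the sum of any two of $f_{12},f_{13},f_{23}$ is nonpositive — a small fact perhaps worth recording on its own. The corollary then follows by contradiction: were two of $f_{12},f_{13},f_{23}$ strictly positive, their sum would be strictly positive, contradicting the relevant inequality above. Hence at most one of them is positive.

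Every step here is an elementary substitution followed by a sign comparison, so I do not anticipate a genuine obstacle; the only ingredient is the choice of the three test vectors, and the symmetry of the right-hand side in the three edge-coefficients makes that choice natural. (Alternatively one could argue from the scalar inequality $f_{12}+f_{13}+f_{23}\leqslant-\sqrt{f_{12}^2+f_{13}^2+f_{23}^2}$ of Proposition~\ref{sufficient and necessary condition for [f_{ij}] to be positive semi-definite}: if, say, $f_{12}$ and $f_{13}$ were positive, it forces $f_{23}<0$ with $|f_{23}|\geqslant f_{12}+f_{13}$, and squaring the inequality yields $f_{12}f_{13}\geqslant|f_{23}|(f_{12}+f_{13})\geqslant(f_{12}+f_{13})^2$, i.e.\ $0\geqslant f_{12}^2+f_{12}f_{13}+f_{13}^2>0$, a contradiction; but the test-vector route is shorter and more transparent.)
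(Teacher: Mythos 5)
Your argument is correct. The three substitutions are computed correctly: $[0,1,1]$, $[1,0,1]$, $[1,1,0]$ give the values $-(f_{12}+f_{13})$, $-(f_{12}+f_{23})$, $-(f_{13}+f_{23})$ of the quadratic form, and positive semi-definiteness then yields the pairwise inequalities $f_{ij}+f_{kl}\leqslant0$, from which the conclusion is immediate. This is a genuinely different (and more self-contained) route from the paper's: the paper invokes the full characterization $f_{12}+f_{13}+f_{23}\leqslant-\sqrt{f_{12}^2+f_{13}^2+f_{23}^2}$ from Proposition \ref{sufficient and necessary condition for [f_{ij}] to be positive semi-definite}, normalizes to the unit sphere, and observes geometrically that a point of the spherical cap below the plane $f_{12}+f_{13}+f_{23}=-1$ cannot have two positive coordinates (since the third coordinate is at least $-1$). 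Your test-vector argument needs only positive semi-definiteness itself, not the necessary-and-sufficient condition, and it records the slightly stronger fact that the sum of any two of the off-diagonal coefficients is nonpositive; the paper's version buys brevity given that the Proposition has already been established. Your parenthetical alternative, which does go through the Proposition's scalar inequality, is also correct and is essentially an algebraic rendering of the paper's geometric observation.
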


\begin{proof}
If they are not all zero, we can assume $f_{12}^2\!+\!f_{13}^2\!+\!f_{23}^2=1$. By Proposition \ref{sufficient and necessary condition for [f_{ij}] to be positive semi-definite}, $[f_{12},f_{13},f_{23}]^{\rm T}$ lies on the cap of the unit sphere below the plane $f_{12}\!+\!f_{13}\!+\!f_{23}=-1$ and thus not inside the first octant or its nearest three neighbors.
\end{proof}

\section{Phase diagrams of 1-D global minimizers}
\label{Phase diagrams: most favored repetends}

In this section we compare the free energy of several candidates that are representative of the global minimizers obtained in our numerical experiments. We present the candidates of the lowest free energy using phase diagrams. With the charge analogy drawn in Section \ref{physical analogy}, we give some explanations of the computed phase diagrams.

\subsection{Computational results}

\begin{figure}
\centering
\begin{tikzpicture}
\draw (0pt,0pt) node[inner sep=0pt] {\includegraphics[width=270pt]{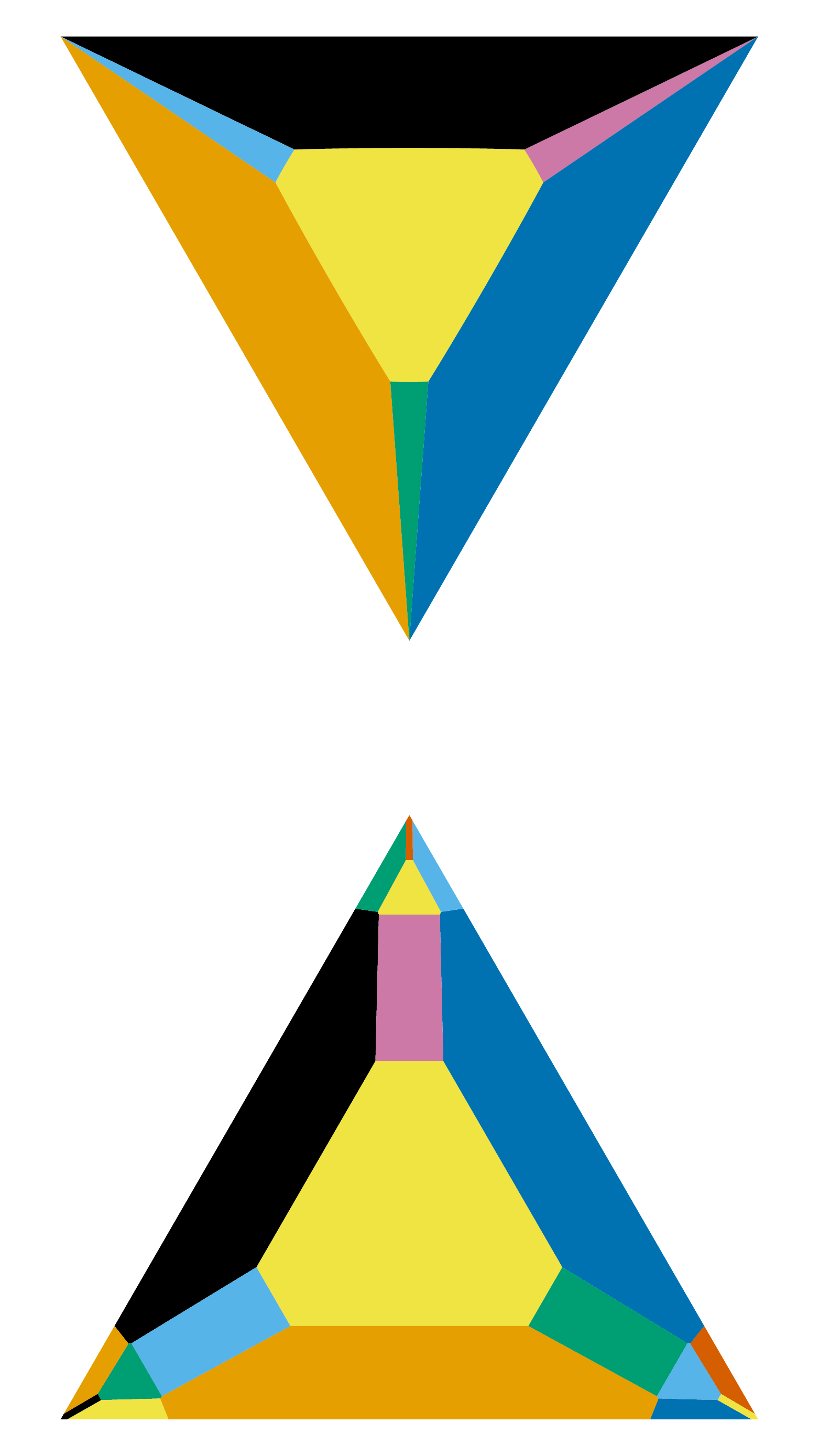}};
\node at (0pt,20pt) {$(\omega_1,\omega_2,\omega_3)=(0,0,1)$};
\node [rotate=60] at (-120pt,230pt) {$(1,0,0)$};
\node [rotate=-60] at (120pt,230pt) {$(0,1,0)$};
\node at (0pt,-20pt) {$(c_{12},c_{13},c_{23})=(1,0,1)$};
\node [rotate=-60] at (-121pt,-227pt) {$(1,1,0)$};
\node [rotate=60] at (121pt,-227pt) {$(0,1,1)$};
\node [anchor=west] at (-160pt,95pt) {1\hspace{5pt}\color{BW yellow}$ABC$};
\node [anchor=west] at (-160pt,75pt) {2\hspace{5pt}\color{BW blue}$ABAC$};
\node [anchor=west] at (-160pt,55pt) {3\hspace{5pt}\color{BW orange}$BABC$};
\node [anchor=west] at (-160pt,35pt) {4\hspace{5pt}\color{BW black}$CACB$};
\node [anchor=west] at (-160pt,15pt) {5\hspace{5pt}\color{BW bluish green}$ABACBABC$};
\node [anchor=west] at (-160pt,-5pt) {6\hspace{5pt}\color{BW reddish purple}$CACBACAB$};
\node [anchor=west] at (-160pt,-25pt) {7\hspace{5pt}\color{BW sky blue}$CBCABCBA$};
\node [anchor=west] at (-160pt,-45pt) {8\hspace{5pt}\color{BW sky blue}$BABAC$};
\node [anchor=west] at (-160pt,-65pt) {9\hspace{5pt}\color{BW yellow}$CACAB$};
\node [anchor=west] at (-160pt,-85pt) {\hspace{-4pt}10\hspace{3pt}\color{BW bluish green}$CBCBA$};
\node [anchor=east] at (160pt,85pt) {11\hspace{5pt}\color{BW blue}$BABABC$};
\node [anchor=east] at (160pt,65pt) {12\hspace{5pt}\color{BW vermillion}$ABABAC$};
\node [anchor=east] at (160pt,45pt) {13\hspace{5pt}\color{BW sky blue}$ACACAB$};
\node [anchor=east] at (160pt,25pt) {14\hspace{5pt}\color{BW bluish green}$CACACB$};
\node [anchor=east] at (160pt,5pt) {15\hspace{5pt}\color{BW orange}$CBCBCA$};
\node [anchor=east] at (160pt,-15pt) {16\hspace{5pt}\color{BW yellow}$BCBCBA$};
\node [anchor=east] at (160pt,-35pt) {17\hspace{5pt}\color{BW black}$BCBCBACBCBCA$};
\node [anchor=east] at (160pt,-55pt) {18\hspace{5pt}\color{BW yellow}$BABABCABABAC$};
\node [anchor=east] at (160pt,-75pt) {19\hspace{5pt}\color{BW vermillion}$ACACABCACACB$};
\node at (0pt,160pt) {\fontsize{18pt}{0pt}\selectfont 1};
\node [rotate=-30] at (42pt,137pt) {\color{white}\normalsize 2};
\node [rotate=30] at (-42pt,137pt) {\normalsize 3};
\node at (0pt,210pt) {\color{white}\normalsize 4};
\node at (0pt,108pt) {\color{white}\fontsize{10pt}{0pt}\selectfont 5};
\node [rotate=-60] at (46.5pt,188.5pt) {\fontsize{10pt}{0pt}\selectfont 6};
\node [rotate=60] at (-46.5pt,188.5pt) {\fontsize{10pt}{0pt}\selectfont 7};
\node at (0pt,-160pt) {\fontsize{18pt}{0pt}\selectfont 1};
\node [rotate=30] at (42pt,-137pt) {\color{white}\normalsize 2};
\node at (0pt,-213pt) {\normalsize 3};
\node [rotate=-30] at (-42pt,-137pt) {\color{white}\normalsize 4};
\node [rotate=-30] at (65pt,-200pt) {\color{white}\normalsize 5};
\node at (0pt,-85pt) {\normalsize 6};
\node [rotate=30] at (-65pt,-200pt) {\normalsize 7};
\node at (92pt,-215pt) {\fontsize{8pt}{0pt}\selectfont 8};
\node at (0pt,-54pt) {\fontsize{8pt}{0pt}\selectfont 9};
\node at (-92pt,-215pt) {\color{white}\fontsize{8pt}{0pt}\selectfont 10};
\node at (92pt,-224.5pt) {\color{white}\fontsize{6pt}{0pt}\selectfont 11};
\node [rotate=30] at (100.5pt,-210pt) {\color{white}\fontsize{6pt}{0pt}\selectfont 12};
\node [rotate=30] at (9.5pt,-52pt) {\fontsize{6pt}{0pt}\selectfont 13};
\node [rotate=-30] at (-9.7pt,-52pt) {\color{white}\fontsize{6pt}{0pt}\selectfont 14};
\node [rotate=-30] at (-100.5pt,-210pt) {\fontsize{6pt}{0pt}\selectfont 15};
\node at (-92pt,-224.5pt) {\fontsize{6pt}{0pt}\selectfont 16};
\node [rotate=30] at (-108.5pt,-224.2pt) {\color{white}\fontsize{2pt}{0pt}\selectfont 17};
\node [rotate=-30] at (108.5pt,-224.2pt) {\fontsize{2pt}{0pt}\selectfont 18};
\node at (-0.1pt,-37pt) {\color{white}\fontsize{2pt}{0pt}\selectfont 19};
\end{tikzpicture}
\caption{Plausible phase diagrams of 1-D global minimizers obtained by comparing the free energy of 19 candidates for $[\gamma_{ij}]$ given in \eqref{Ren's matrix} as $\gamma\rightarrow\infty$. Top: cross section along $\omega_1,\omega_2,\omega_3$ for $c_{12}=c_{13}=c_{23}$; the three vertices indicate that one of $\{\omega_i\}$ is $1$ and that the others are $0$. Bottom: cross section along $c_{12},c_{13},c_{23}$ for $\omega_1=\omega_2=\omega_3$; the three vertices indicate that one of $\{c_{ij}\}$ is $0$ and that the others are $1$. The interior points are convex combinations of the vertices.}
\label{PhaseDiagramsRen}
\end{figure}

\begin{figure}
\centering
\begin{tikzpicture}
\draw (0pt,0pt) node[inner sep=0pt] {\includegraphics[width=270pt]{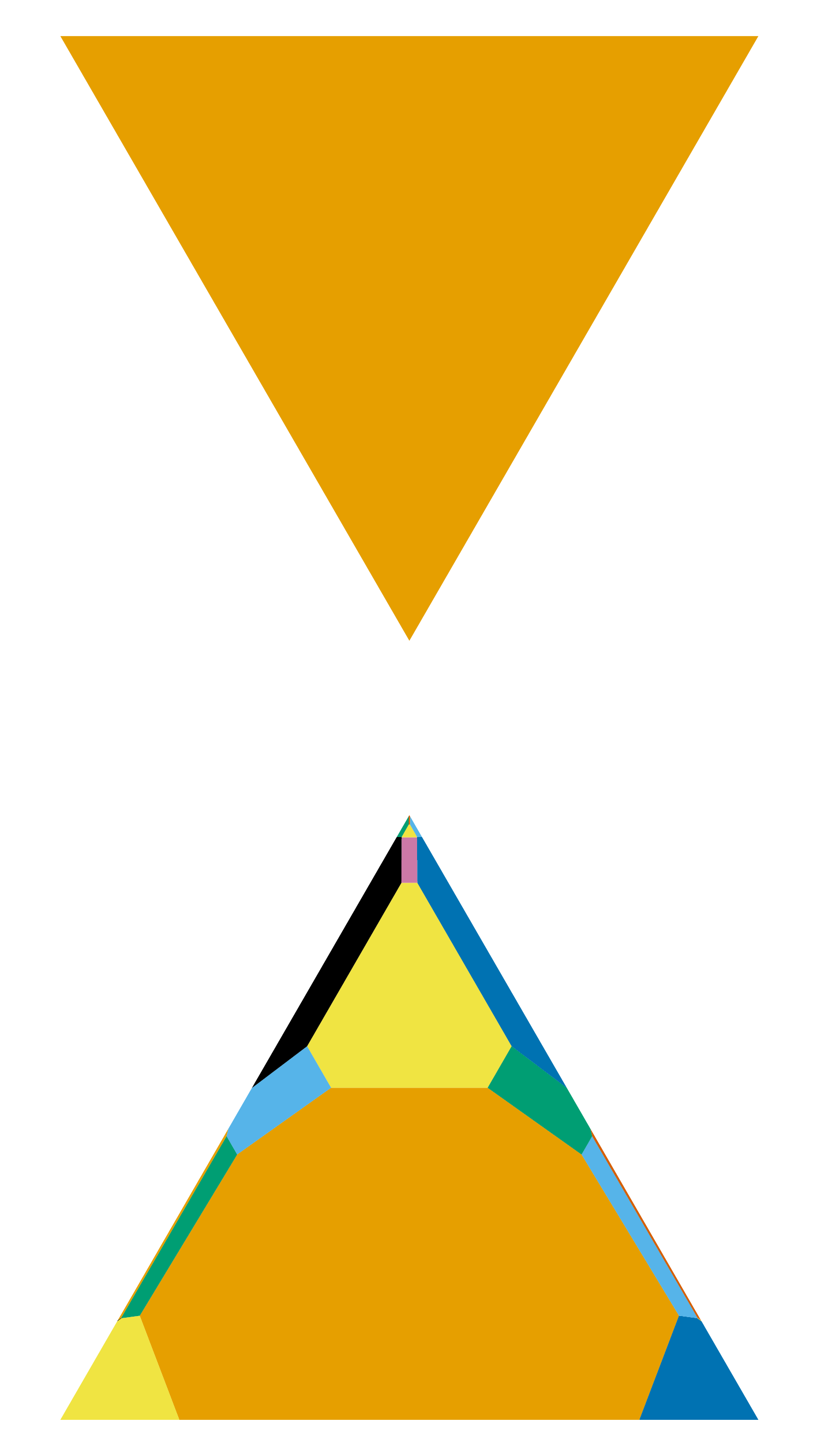}};
\node at (0pt,20pt) {$(\omega_1,\omega_2,\omega_3)=(0,0,1)$};
\node [rotate=60] at (-120pt,230pt) {$(1,0,0)$};
\node [rotate=-60] at (120pt,230pt) {$(0,1,0)$};
\node at (0pt,-16pt) {$(c_{12},c_{13},c_{23})=(1,0,1)$};
\node [rotate=-60] at (-121pt,-227pt) {$(1,1,0)$};
\node [rotate=60] at (121pt,-227pt) {$(0,1,1)$};
\node [anchor=west] at (-160pt,95pt) {1\hspace{5pt}\color{BW yellow}$ABC$};
\node [anchor=west] at (-160pt,75pt) {2\hspace{5pt}\color{BW blue}$ABAC$};
\node [anchor=west] at (-160pt,55pt) {3\hspace{5pt}\color{BW orange}$BABC$};
\node [anchor=west] at (-160pt,35pt) {4\hspace{5pt}\color{BW black}$CACB$};
\node [anchor=west] at (-160pt,15pt) {5\hspace{5pt}\color{BW bluish green}$ABACBABC$};
\node [anchor=west] at (-160pt,-5pt) {6\hspace{5pt}\color{BW reddish purple}$CACBACAB$};
\node [anchor=west] at (-160pt,-25pt) {7\hspace{5pt}\color{BW sky blue}$CBCABCBA$};
\node [anchor=west] at (-160pt,-45pt) {8\hspace{5pt}\color{BW sky blue}$BABAC$};
\node [anchor=west] at (-160pt,-65pt) {9\hspace{5pt}\color{BW yellow}$CACAB$};
\node [anchor=west] at (-160pt,-85pt) {\hspace{-4pt}10\hspace{3pt}\color{BW bluish green}$CBCBA$};
\node [anchor=east] at (160pt,85pt) {11\hspace{5pt}\color{BW blue}$BABABC$};
\node [anchor=east] at (160pt,65pt) {12\hspace{5pt}\color{BW vermillion}$ABABAC$};
\node [anchor=east] at (160pt,45pt) {13\hspace{5pt}\color{BW sky blue}$ACACAB$};
\node [anchor=east] at (160pt,25pt) {14\hspace{5pt}\color{BW bluish green}$CACACB$};
\node [anchor=east] at (160pt,5pt) {15\hspace{5pt}\color{BW orange}$CBCBCA$};
\node [anchor=east] at (160pt,-15pt) {16\hspace{5pt}\color{BW yellow}$BCBCBA$};
\node [anchor=east] at (160pt,-35pt) {17\hspace{5pt}\color{BW black}$BCBCBACBCBCA$};
\node [anchor=east] at (160pt,-55pt) {18\hspace{5pt}\color{BW yellow}$BABABCABABAC$};
\node [anchor=east] at (160pt,-75pt) {19\hspace{5pt}\color{BW vermillion}$ACACABCACACB$};
\node at (0pt,160pt) {\fontsize{21pt}{0pt}\selectfont 3};
\node at (0pt,-95pt) {\fontsize{18pt}{0pt}\selectfont 1};
\node [rotate=30] at (22pt,-75pt) {\color{white}\normalsize 2};
\node at (0pt,-180pt) {\fontsize{21pt}{0pt}\selectfont 3};
\node [rotate=-30] at (-21pt,-75pt) {\color{white}\normalsize 4};
\node [rotate=-25] at (40pt,-120pt) {\color{white}\normalsize 5};
\node at (0pt,-43pt) {\fontsize{9pt}{0pt}\selectfont 6};
\node [rotate=25] at (-40pt,-120pt) {\normalsize 7};
\node [rotate=30] at (76pt,-167pt) {\fontsize{8pt}{0pt}\selectfont 8};
\node at (0.1pt,-34.4pt) {\fontsize{4pt}{0pt}\selectfont 9};
\node [rotate=60] at (-76pt,-167pt) {\color{white}\fontsize{7pt}{0pt}\selectfont 10};
\node at (94pt,-215pt) {\color{white}\fontsize{10pt}{0pt}\selectfont 11};
\node [rotate=30] at (88pt,-175pt) {\fontsize{4pt}{0pt}\selectfont 12};
\node [rotate=-60] at (1.68pt,-33pt) {\fontsize{1.8pt}{0pt}\selectfont 13};
\node [rotate=60] at (-1.65pt,-33pt) {\color{white}\fontsize{1.8pt}{0pt}\selectfont 14};
\node [rotate=-30] at (-88pt,-175pt) {\fontsize{4pt}{0pt}\selectfont 15};
\node at (-94pt,-215pt) {\fontsize{10pt}{0pt}\selectfont 16};
\node [rotate=-30] at (-97.5pt,-195pt) {\fontsize{2pt}{0pt}\selectfont 17};
\node [rotate=30] at (97.5pt,-195pt) {\fontsize{2pt}{0pt}\selectfont 18};
\node at (0pt,-28pt) {\fontsize{2pt}{0pt}\selectfont 19};
\end{tikzpicture}
\caption{Plausible phase diagrams of 1-D global minimizers for $[\gamma_{ij}]$ given in \eqref{Ohta's matrix}. Top: cross section along $\omega_1,\omega_2,\omega_3$ for $c_{12}=c_{13}=c_{23}$. Bottom: cross section along $c_{12},c_{13},c_{23}$ for $\omega_1=\omega_2=\omega_3$. Colors adopted from \cite[Figure 2]{wong2011color}.}
\label{PhaseDiagramsOhta}
\end{figure}

For $\Omega=[0,1]$ with periodic boundary conditions, we obtain the phase diagrams by comparing 19 candidates for the minimizer of \eqref{definition of J}, each of which is a repetition of a certain repetend. Note that there are infinitely many repetends to consider, e.g., \monomer{A}\monomer{B}\monomer{A}\monomer{B} $\cdots\;$\monomer{A}\monomer{B}\monomer{C}, but we expect that they only matter near the boundaries of the phase diagrams. We retain 19 candidates for each of which \monomer{A}, \monomer{B} and \monomer{C} appear at least once in 6 consecutive layers. For our illustrative purposes, those candidates should be adequate to provide us a rough picture, although they might still be incomplete, e.g., the actual global minimizer might be the hybrid of two candidates, and the numerical experiments we have carried our so far are limited. The free energy of each candidate is derived in a similar manner to \eqref{free energy of ABAC} with the help of WOLFRAM MATHEMATICA. For convenience, we make some symmetry assumptions based on numerical results. For example, in the repetend \monomer{A}\monomer{B}\monomer{A}\monomer{B}\monomer{A}\monomer{C}, we assume the first and third \monomer{A} layers to have the same width (which is the distance between neighboring interfaces), and numerically optimize the width of the second \monomer{A} layer. We also assume the two \monomer{B} layers to have the same width.

For any choice of $[\gamma_{ij}]$, the functional \eqref{definition of J} has 5 degrees of freedom, i.e., $c_{12},c_{13},c_{23}$ and $\omega_1$, $\omega_2$ with $\omega_3=1\!-\!\omega_1\!-\!\omega_2$. However, since we are interested in the case where more and more microdomains emerge as the long range term becomes more and more dominant, we can require that $c_{12}\!+\!c_{13}\!+\!c_{23}=2$, and let the factor $\gamma$ (e.g., in \eqref{Ohta's matrix}) go to infinity. In this way, the degrees of freedom is reduced to 4.

To visualize such a 4-D phase diagram, we draw two cross sections: one is along $\omega_1,\omega_2,\omega_3$ with $c_{12}=c_{13}=c_{23}$, the other is along $c_{12},c_{13},c_{23}$ with $\omega_1=\omega_2=\omega_3$. Noticing that $[\omega_1,\omega_2,\omega_3]^{\rm T}$ with $\omega_i>0$ and $\sum_i\omega_i=1$ lies on an equilateral triangle as shown in Figure \ref{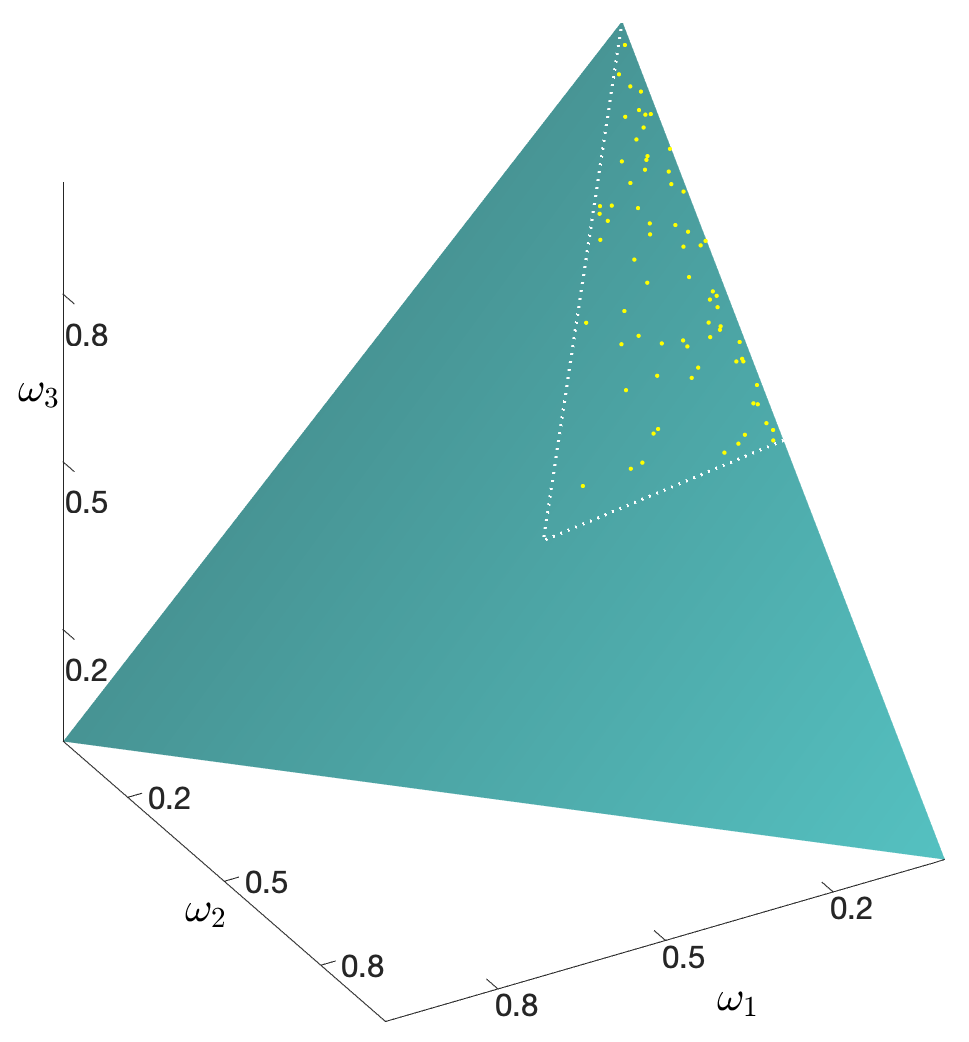}, we can draw the first cross section as an equilateral triangle, whose vertices represent one of $\{\omega_i\}$ being $1$ and the others being $0$. The second cross section can be analogously drawn as an equilateral triangle, whose vertices represent $(c_{12},c_{13},c_{23})$ being $(1,1,0)$, $(1,0,1)$ and $(0,1,1)$, respectively. Any $(c_{12},c_{13},c_{23})$ satisfying $c_{12}\!+\!c_{13}\!+\!c_{23}=2$ and triangle inequalities can therefore be written as a convex combination of the vertices, with coefficients being $(c_{12}\!+\!c_{13}\!-\!c_{23})/2$, $(c_{12}\!+\!c_{23}\!-\!c_{13})/2$ and $(c_{13}\!+\!c_{23}\!-\!c_{12})/2$, respectively. It is noteworthy that the interfacial tensions $\{c_{ij}\}$ are regarded here as independent of $\{\omega_i\}$ for convenience, although they can be derived from \eqref{definition of J_epsilon} as $\epsilon\rightarrow0$ and therefore depend on $W$ and $\{\omega_i\}$ \cite[Definition 3.3]{ren2003triblock2}.

For different choices of $[\gamma_{ij}]$, the comparison results are different. The phase diagrams in Figures \ref{PhaseDiagramsRen} and \ref{PhaseDiagramsOhta} correspond to \eqref{Ren's matrix} and \eqref{Ohta's matrix}, respectively. The two figures look similar to some extent, but noticeably in the latter the repetend \monomer{A}\monomer{B}\monomer{C}\monomer{B} is more dominant, and in particular occupies the entire top cross section. Since the lamellar \monomer{A}\monomer{B}\monomer{C}\monomer{B} phase is commonly observed in physical experiments on triblock copolymers (see Figure \ref{Ren vs. Mogi}), it is possible that the original choice \eqref{Ohta's matrix} by Ohta et al. better captures the self-assembly physics of certain types of triblock copolymers. However, from a mathematical point of view, it makes sense to choose among all the admissible matrices. As pointed out in Remark \ref{microscopic physical picture}, if we discount the overall factor and a trivial case (i.e., the zero matrix), such admissible matrices have 2 degrees of freedom and parametrize a spherical cap in Figure \ref{f_ijChoices.png}. It will be interesting to explore the entire range of admissible $[\gamma_{ij}]$ and the entire 6-D phase diagram. As mentioned before, a degenerate case \eqref{Blend's matrix} has already been studied in the context of the mixture of \monomer{A}\monomer{B} diblock copolymers and \monomer{C} homopolymers. This degenerate case is on the rim of the spherical cap and not covered by Figure \ref{PhaseDiagramsRen} or \ref{PhaseDiagramsOhta}, and the global minimizers are of patterns \monomer{A}\monomer{B}\monomer{A}\monomer{B} $\cdots\;$\monomer{C}, with \monomer{C} appearing only once \cite[Theorems 4 and 5]{van2008copolymer}. In Figures \ref{PhaseDiagramsRen} and \ref{PhaseDiagramsOhta} there are similar patterns such as \monomer{A}\monomer{B}\monomer{A}\monomer{B}\monomer{A}\monomer{C} $\cdots\,$\monomer{A}\monomer{B}\monomer{A}\monomer{B}\monomer{A}\monomer{C} for non-degenerate choices of $[\gamma_{ij}]$, but they are different in that \monomer{C} appears more than once, and they are of a different origin: nearly degenerate $\{c_{ij}\}$. For example, when $c_{12}\ll\min\{c_{13},c_{23}\}$ (i.e., the interfaces between \monomer{A} and \monomer{B} are barely penalized), $\{c_{ij}\}$ is said to be nearly degenerate, since triangle inequalities are almost violated.

\subsection{Explanations of phase diagrams}
\label{sec:explanation}

We now present some intuitive explanations of the computed phase diagrams, by illustrating how several repetends might achieve low free energy depending on the parameters. In essence and as expected, it is the competition between the potential energy (due to the nonlocal interactions between charges) and the interfacial energy weighted by various coefficients that gives rise to a variety of patterns.

\subsubsection{Repetend \texorpdfstring{\monomer{A}\monomer{B}\monomer{C}}{ABC}}
\label{Repetend ABC}

In the regions labelled by 1 in Figures \ref{PhaseDiagramsRen} and \ref{PhaseDiagramsOhta}, the repetend \monomer{A}\monomer{B}\monomer{C} has the lowest free energy among our candidates, suggesting that Ren\textendash Wei conjecture is likely to hold for the corresponding parameters. As a first step towards understanding this phenomenon, let us compare two patterns \monomer{A}\monomer{B}\monomer{C}\monomer{B}\monomer{A}\monomer{C} and \monomer{A}\monomer{B}\monomer{C}\monomer{A}\monomer{B}\monomer{C} for $c_{12}=c_{13}=c_{23}$. With periodic boundary conditions, we can visualize $[0,1]$ as a circle, and the Green's function $G(x,y)$ given in \eqref{expression of G on 1-D periodic cell} attains its minimum when $|x\!-\!y|=1/2$, indicating that in a 1-D torus, the Coulombic repulsion drives two point charges to opposite ends of a diameter. From the left side of Figure \ref{ABCBAC and ABCABC and balls with point charges} we can see that both patterns have the same number of interfaces, and thus the same short range term of the free energy. To compare their long range terms, for convenience we assume that the layer widths are uniform, i.e., all the layers of the same type have the same width. We know that \monomer{A} repels \monomer{A}, and \monomer{B} repels \monomer{B}, but \monomer{A} attracts \monomer{B} (i.e., in \eqref{Ren's matrix f_ij} and \eqref{Ohta's matrix f_ij}, we have $f_{11},f_{22}>0$ and $f_{12}<0$). In this way, the pattern \monomer{A}\monomer{B}\monomer{C}\monomer{B}\monomer{A}\monomer{C} is energetically not so favorable as \monomer{A}\monomer{B}\monomer{C}\monomer{A}\monomer{B}\monomer{C}, because in the former, the two \monomer{A} layers are closer, and the two \monomer{B} layers are closer, but each \monomer{A} layer is farther from a \monomer{B} layer.

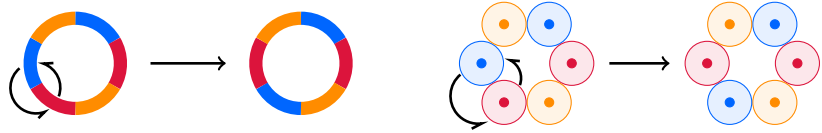
\begin{figure}[H]
\centering
\begin{tikzpicture}
\draw[color=blue A, line width = 5] ([shift={(0,0)}]30:0.6) arc (30:90:0.6);
\draw[color=red B, line width = 5] ([shift={(0,0)}]-30:0.6) arc (-30:30:0.6);
\draw[color=yellow C, line width = 5] ([shift={(0,0)}]-90:0.6) arc (-90:-30:0.6);
\draw[color=red B, line width = 5] ([shift={(0,0)}]-150:0.6) arc (-150:-90:0.6);
\draw[color=blue A, line width = 5] ([shift={(0,0)}]-210:0.6) arc (-210:-150:0.6);
\draw[color=yellow C, line width = 5] ([shift={(0,0)}]-270:0.6) arc (-270:-210:0.6);

\draw[line width = 1] ([shift={(0.08,0.08)}]240:0.6) arc (-20:70:0.34) -- ([shift={(0.3,-0.1)}]170:0.6);
\draw[line width = 1] ([shift={(-0.15,-0.17)}]170:0.6) arc (130:290:0.34) -- ([shift={(-0.19,-0.21)}]240:0.6);
\draw[-to, line width = 1](1,0) -- (2,0);

\draw[color=blue A, line width = 5] ([shift={(3,0)}]30:0.6) arc (30:90:0.6);
\draw[color=red B, line width = 5] ([shift={(3,0)}]-30:0.6) arc (-30:30:0.6);
\draw[color=yellow C, line width = 5] ([shift={(3,0)}]-90:0.6) arc (-90:-30:0.6);
\draw[color=blue A, line width = 5] ([shift={(3,0)}]-150:0.6) arc (-150:-90:0.6);
\draw[color=red B, line width = 5] ([shift={(3,0)}]-210:0.6) arc (-210:-150:0.6);
\draw[color=yellow C, line width = 5] ([shift={(3,0)}]-270:0.6) arc (-270:-210:0.6);

\draw[color=blue A, fill=blue A!10, line width = 0.4] ([shift={(6,0)}]60:0.6) circle (0.293);
\filldraw[color=blue A] ([shift={(6,0)}]60:0.6) circle (0.06);
\draw[color=red B, fill=red B!10, line width = 0.4] ([shift={(6,0)}]0:0.6) circle (0.293);
\filldraw[color=red B] ([shift={(6,0)}]0:0.6) circle (0.06);
\draw[color=yellow C, fill=yellow C!10, line width = 0.4] ([shift={(6,0)}]-60:0.6) circle (0.293);
\filldraw[color=yellow C] ([shift={(6,0)}]-60:0.6) circle (0.06);
\draw[color=red B, fill=red B!10, line width = 0.4] ([shift={(6,0)}]-120:0.6) circle (0.293);
\filldraw[color=red B] ([shift={(6,0)}]-120:0.6) circle (0.06);
\draw[color=blue A, fill=blue A!10, line width = 0.4] ([shift={(6,0)}]-180:0.6) circle (0.293);
\filldraw[color=blue A] ([shift={(6,0)}]-180:0.6) circle (0.06);
\draw[color=yellow C, fill=yellow C!10, line width = 0.4] ([shift={(6,0)}]-240:0.6) circle (0.293);
\filldraw[color=yellow C] ([shift={(6,0)}]-240:0.6) circle (0.06);

\draw[line width = 1] (6-0.09,-0.29) arc (-20:62:0.27) -- (6-0.09,0.03);
\draw[line width = 1] (6-0.88,-0.15) arc (130:277:0.37) -- (6-0.73,-0.87);
\draw[-to, line width = 1](7.1,0) -- (7.9,0);

\draw[color=blue A, fill=blue A!10, line width = 0.4] ([shift={(9,0)}]60:0.6) circle (0.293);
\filldraw[color=blue A] ([shift={(9,0)}]60:0.6) circle (0.06);
\draw[color=red B, fill=red B!10, line width = 0.4] ([shift={(9,0)}]0:0.6) circle (0.293);
\filldraw[color=red B] ([shift={(9,0)}]0:0.6) circle (0.06);
\draw[color=yellow C, fill=yellow C!10, line width = 0.4] ([shift={(9,0)}]-60:0.6) circle (0.293);
\filldraw[color=yellow C] ([shift={(9,0)}]-60:0.6) circle (0.06);
\draw[color=blue A, fill=blue A!10, line width = 0.4] ([shift={(9,0)}]-120:0.6) circle (0.293);
\filldraw[color=blue A] ([shift={(9,0)}]-120:0.6) circle (0.06);
\draw[color=red B, fill=red B!10, line width = 0.4] ([shift={(9,0)}]-180:0.6) circle (0.293);
\filldraw[color=red B] ([shift={(9,0)}]-180:0.6) circle (0.06);
\draw[color=yellow C, fill=yellow C!10, line width = 0.4] ([shift={(9,0)}]-240:0.6) circle (0.293);
\filldraw[color=yellow C] ([shift={(9,0)}]-240:0.6) circle (0.06);
\end{tikzpicture}
\caption{Left: patterns \monomer{A}\monomer{B}\monomer{C}\monomer{B}\monomer{A}\monomer{C} and \monomer{A}\monomer{B}\monomer{C}\monomer{A}\monomer{B}\monomer{C}. Right: simplification into point charges. {\color{blue A}Blue}, {\color{red B}red} and {\color{yellow C}orange} represent \monomer{A}, \monomer{B} and \monomer{C}, respectively.}
\label{ABCBAC and ABCABC and balls with point charges}
\end{figure}

For the pattern \monomer{A}\monomer{B}\monomer{C} $\cdots\,$\monomer{A}\monomer{B}\monomer{C} to be a local minimizer, Ren and Wei proved that having uniform layer widths is a sufficient condition \cite[Proposition 4.7]{ren2003triblock2}. But it is unknown if it is also a necessary condition, because it is unclear if the solutions to \cite[Equation (4.22)]{ren2003triblock2} are unique up to translation and reflection. However, if we only consider patterns with uniform layer widths and ignore the interfacial energy, then according to Proposition \ref{connection between OK and Ising}, each layer can be regarded as a point (generalized) charge at its midpoint. So the question becomes how to arrange tightly packed balls with charges at their centers, in order to minimize the potential energy between the charges, as shown on the right side of Figure \ref{ABCBAC and ABCABC and balls with point charges} (we assume that the diameters of the balls are the corresponding layer widths so that the charges are separated by proper distances). Note that this question also arises in Figure \ref{charges consist of quarks} naturally and its precise formulation can be found in Appendix \ref{Optimal arrangement of charged balls in 1-D}. We believe that the optimal arrangement is \monomer{A}\monomer{B}\monomer{C} $\cdots\,$\monomer{A}\monomer{B}\monomer{C} (see Conjecture \ref{ABC Ising model}), which seems to maximize the overall distances between balls of the same type, and minimize those of different types. In Proposition \ref{AB Ising model} we prove the binary analogue, i.e., for the interaction strength matrix given by the classical Coulomb's law,
\begin{equation*}
[f_{ij}]=\begin{bmatrix}1 & -1\\-1 & 1\end{bmatrix},
\end{equation*}
the optimal arrangement is \monomer{A}\monomer{B} $\cdots\,$\monomer{A}\monomer{B}. There may not be a straightforward quaternary analogue in 1-D, because \monomer{A}\monomer{B}\monomer{C}\monomer{D} $\cdots\,$\monomer{A}\monomer{B}\monomer{C}\monomer{D} lacks symmetry (\monomer{A} and \monomer{B} are in contact while \monomer{A} and \monomer{C} are not).

\begin{proposition}
\label{connection between OK and Ising}
Given a positive integer $n$, if each $\Omega_i$ is a union of $n$ intervals whose widths are all $\omega_i/n$, then the long range term of \eqref{definition of J} equals $2/n^2$ times the electrostatic potential energy $U$ defined in \eqref{electrostatic potential energy of A, B and C}, up to addition by a constant.
\end{proposition}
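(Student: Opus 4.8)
The plan is to reduce everything to the elementary fact that the one-dimensional periodic Green's function is a function of $x-y$ alone that is piecewise quadratic with $\partial_x^2G=\partial_y^2G=1$ away from the diagonal (see \eqref{expression of G on 1-D periodic cell}); integrated against a pair of symmetric ``box'' profiles such a function reproduces its value at the midpoints, up to a correction depending only on the widths of the boxes.

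First I would fix coordinates so that the origin of $[0,1]$ does not lie inside any layer, so that each $\Omega_i$ is an honest disjoint union $\bigsqcup_{k=1}^{n}I_{i,k}$ of subintervals of $[0,1]$, each of width $|I_{i,k}|=\omega_i/n$; let $m_{i,k}$ be the midpoint of $I_{i,k}$. By the sharp-interface case of Theorem~\ref{equivalence between long range term and electrostatic potential energy}, the long range term of \eqref{definition of J} equals $2U(\vec\rho)$ with $\rho_i=\bm1_{\Omega_i}/\omega_i$ and $[f_{ij}]=\mathrm{diag}(\vec\omega)\,[\gamma_{ij}]\,\mathrm{diag}(\vec\omega)$. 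Expanding the indicator functions layer by layer,
\[
U(\vec\rho)=\frac12\sum_{i,j}\frac{f_{ij}}{\omega_i\omega_j}\sum_{k,l}\iint_{I_{i,k}\times I_{j,l}}G(x,y)\dd{x}\dd{y},
\]
and I would split the inner sum into the distinct layer pairs and the $3n$ diagonal pairs $(i,k)=(j,l)$.

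For a distinct pair $I=I_{i,k}$, $J=I_{j,l}$, the difference set $I-J\subseteq(-1,1)$ meets the kink set $\mathbb Z$ of the kernel only at its endpoints --- an interior crossing of $0$ would force $I$ and $J$ to overlap --- so $G$ is quadratic on $I\times J$, and since each box is symmetric about its midpoint and $\partial_x^2G=\partial_y^2G=1$, integrating exactly gives
\[
\iint_{I\times J}G=|I||J|\Big(G(m_I,m_J)+\tfrac{|I|^2+|J|^2}{24}\Big).
\]
Because all widths with a fixed species index equal $\omega_i/n$, the correction $\tfrac{|I|^2+|J|^2}{24}=\tfrac{\omega_i^2+\omega_j^2}{24n^2}$ is independent of the ordering of the layers, so summing these corrections over all distinct pairs contributes only a constant (depending on $n$ and $\vec\omega$). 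For a diagonal pair, $\iint_{I_{i,k}\times I_{i,k}}G$ depends only on $|I_{i,k}|=\omega_i/n$ --- evaluate it directly from \eqref{expression of G on 1-D periodic cell} --- and is likewise ordering-independent. The leftover midpoint terms from the distinct pairs then assemble, after accounting for the factor $2$ above, into $\tfrac2{n^2}U$, where $U$ is the potential energy in \eqref{electrostatic potential energy of A, B and C} of the $3n$ unit point charges located at the midpoints $m_{i,k}$; hence the long range term of \eqref{definition of J} equals $\tfrac2{n^2}U$ plus the constant assembled above.

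The one delicate point is the bookkeeping near the diagonal: one must check that no distinct pair of layers straddles an interior kink of $G$ (so the exact identity really applies to all of them), and that the diagonal self-energies together with the width corrections genuinely combine into a single ordering-independent constant. Both follow at once from $|I_{i,k}|\equiv\omega_i/n$ and from $G$ depending on $x-y$ only; the rest is a routine tracking of factors of $n$ and $\omega_i$.
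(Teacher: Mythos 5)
Your proposal is correct and follows essentially the same route as the paper: reduce to the charge formulation via Theorem~\ref{equivalence between long range term and electrostatic potential energy}, then apply the exact identity expressing the double integral of $G$ over two disjoint intervals as its midpoint value plus a width-dependent correction, and observe that all corrections (including the diagonal self-energies) are ordering-independent constants. Your treatment is, if anything, slightly more explicit about the diagonal pairs and the absence of interior kinks than the paper's own proof.
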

\begin{proof}
Let $\{I_k\}_{k=1}^{3n}$ denote those intervals (which cannot be nonoverlapping), and define $i_k$ so that $I_k\subseteq\Omega_{i_k}$ for each $k$. By Theorem \ref{equivalence between long range term and electrostatic potential energy}, the long range term of \eqref{definition of J} equals two times
\begin{equation*}
\frac12\sum_{k=1}^{3n}\sum_{m=1}^{3n}q_k\,f_{\,i_k\,j_m}\,q_m\fint_{I_m}\fint_{I_k}G(x,y)\dd{x}\dd{y},
\end{equation*}
where $j_m=i_m$, and $q_k$ is the $i_k$-th component of $\int_{I_k}\vec\rho(x)\dd{x}$. From the relation between $\vec\rho$ and $\bm1_{\Omega_i}$ shown in Theorem \ref{equivalence between long range term and electrostatic potential energy}, we can see $q_k=1/n$. Using the explicit form of $G$ given in \eqref{expression of G on 1-D periodic cell}, for $x_1,x_2,y_1,y_2\in[0,1]$ and $(x_1,x_2)\cap(y_1,y_2)=\varnothing$ we have
\begin{equation*}
\fint_{y_1}^{y_2}\hspace{-6pt}\fint_{x_1}^{x_2}\hspace{-5pt}G(x,y)\dd{x}\dd{y}=G\Big(\frac{x_1\!+\!x_2}2,\frac{y_1\!+\!y_2}2\Big)+\frac{(x_2\!-\!x_1)^2+(y_2\!-\!y_1)^2}{24},
\end{equation*}
where the first summand only involves the midpoints, and the second summand only yields constant terms because the width of each $I_k$ is fixed.
\end{proof}

\subsubsection{Repetend \texorpdfstring{\monomer{A}\monomer{B}\monomer{C}\monomer{B}}{ABCB}}

If we incorporate the interfacial energy (i.e., penalize adjacent balls of different types), then \monomer{A}\monomer{B}\monomer{C} $\cdots\,$\monomer{A}\monomer{B}\monomer{C} may no longer be the optimal arrangement. As shown on the left side of Figure \ref{ABCABC and ABCB}, after the swap, the two \monomer{A} layers merge into one, and the two \monomer{C} layers merge into one, so the short range term of the free energy decreases by $2c_{13}$. Depending on the parameters, if such a decrease outweighs the increase in long range term, then the pattern \monomer{A}\monomer{B}\monomer{C}\monomer{B} is better than \monomer{A}\monomer{B}\monomer{C}\monomer{A}\monomer{B}\monomer{C} with uniform layer widths. For example, when $c_{13}$ is large, the decrease in the short range term is large. This case corresponds to the regions labelled by 3 at the bottom of Figures \ref{PhaseDiagramsRen} and \ref{PhaseDiagramsOhta}. Another example is when $f_{13}$ becomes larger, \monomer{A} and \monomer{C} balls become less attractive (while $f_{13}$ is still negative) or even become repulsive (after $f_{13}$ becomes positive), so the increase in long range term becomes smaller. This qualitatively explains why the regions labelled by 3 are small in Figure \ref{PhaseDiagramsRen} (whose $f_{13}$ is negative), but large in Figure \ref{PhaseDiagramsOhta} (whose $f_{13}$ is positive). It also explains the existence of the region labelled by 3 in the upper half of Figure \ref{PhaseDiagramsRen}, which corresponds to relatively small $\omega_2$, and thus relatively large $f_{13}$ according to \eqref{Ren's matrix f_ij}.

\begin{figure}[H]
\centering
\begin{tikzpicture}
\draw[color=blue A, line width = 5] ([shift={(0,0)}]30:0.6) arc (30:90:0.6);
\draw[color=red B, line width = 5] ([shift={(0,0)}]-30:0.6) arc (-30:30:0.6);
\draw[color=yellow C, line width = 5] ([shift={(0,0)}]-90:0.6) arc (-90:-30:0.6);
\draw[color=blue A, line width = 5] ([shift={(0,0)}]-150:0.6) arc (-150:-90:0.6);
\draw[color=red B, line width = 5] ([shift={(0,0)}]-210:0.6) arc (-210:-150:0.6);
\draw[color=yellow C, line width = 5] ([shift={(0,0)}]-270:0.6) arc (-270:-210:0.6);

\draw ([shift={(0,0)}]90:0.69) -- ([shift={(0,0)}]90:0.8);
\draw ([shift={(0,0)}]-90:0.69) -- ([shift={(0,0)}]-90:0.8);
\draw[line width = 1] (-0.15,-0.45) arc (-20:18:1.3) -- (-0.04,0.3);
\draw[line width = 1] (-0.25,0.41) arc (20:-17:1.2) -- (-0.26,-0.2);
\draw[-to, line width = 1](1,0) -- (2,0);

\draw[color=blue A, line width = 5] ([shift={(3,0)}]30:0.6) arc (30:150:0.6);
\draw[color=red B, line width = 5] ([shift={(3,0)}]-30:0.6) arc (-30:30:0.6);
\draw[color=yellow C, line width = 5] ([shift={(3,0)}]-150:0.6) arc (-150:-30:0.6);
\draw[color=red B, line width = 5] ([shift={(3,0)}]-210:0.6) arc (-210:-150:0.6);

\draw ([shift={(3,0)}]90:0.69) -- ([shift={(3,0)}]90:0.8);
\draw ([shift={(3,0)}]-90:0.69) -- ([shift={(3,0)}]-90:0.8);

\draw[color=blue A, fill=blue A!10, line width = 0.4] ([shift={(6,0)}]60:0.6) circle (0.293);
\filldraw[color=blue A] ([shift={(6,0)}]60:0.6) circle (0.06);
\draw[color=red B, fill=red B!10, line width = 0.4] ([shift={(6,0)}]0:0.6) circle (0.293);
\filldraw[color=red B] ([shift={(6,0)}]0:0.6) circle (0.06);
\draw[color=yellow C, fill=yellow C!10, line width = 0.4] ([shift={(6,0)}]-60:0.6) circle (0.293);
\filldraw[color=yellow C] ([shift={(6,0)}]-60:0.6) circle (0.06);
\draw[color=blue A, fill=blue A!10, line width = 0.4] ([shift={(6,0)}]-120:0.6) circle (0.293);
\filldraw[color=blue A] ([shift={(6,0)}]-120:0.6) circle (0.06);
\draw[color=red B, fill=red B!10, line width = 0.4] ([shift={(6,0)}]-180:0.6) circle (0.293);
\filldraw[color=red B] ([shift={(6,0)}]-180:0.6) circle (0.06);
\draw[color=yellow C, fill=yellow C!10, line width = 0.4] ([shift={(6,0)}]-240:0.6) circle (0.293);
\filldraw[color=yellow C] ([shift={(6,0)}]-240:0.6) circle (0.06);

\draw[line width = 1] (6-0.07,-0.31) arc (-30:27:0.55) -- (6+0.06,0.12);
\draw[line width = 1] (6-0.18,0.23) arc (30:-19:0.52) -- (6-0.18,-0.05);
\draw[-to, line width = 1](7.1,0) -- (7.9,0);

\draw[color=blue A, fill=blue A!10, line width = 0.4] ([shift={(9,0)}]60:0.6) circle (0.293);
\filldraw[color=blue A] ([shift={(9,0)}]60:0.6) circle (0.06);
\draw[color=red B, fill=red B!10, line width = 0.4] ([shift={(9,0)}]0:0.6) circle (0.293);
\filldraw[color=red B] ([shift={(9,0)}]0:0.6) circle (0.06);
\draw[color=yellow C, fill=yellow C!10, line width = 0.4] ([shift={(9,0)}]-60:0.6) circle (0.293);
\filldraw[color=yellow C] ([shift={(9,0)}]-60:0.6) circle (0.06);
\draw[color=yellow C, fill=yellow C!10, line width = 0.4] ([shift={(9,0)}]-120:0.6) circle (0.293);
\filldraw[color=yellow C] ([shift={(9,0)}]-120:0.6) circle (0.06);
\draw[color=red B, fill=red B!10, line width = 0.4] ([shift={(9,0)}]-180:0.6) circle (0.293);
\filldraw[color=red B] ([shift={(9,0)}]-180:0.6) circle (0.06);
\draw[color=blue A, fill=blue A!10, line width = 0.4] ([shift={(9,0)}]-240:0.6) circle (0.293);
\filldraw[color=blue A] ([shift={(9,0)}]-240:0.6) circle (0.06);
\end{tikzpicture}
\caption{Left: patterns \monomer{A}\monomer{B}\monomer{C}\monomer{A}\monomer{B}\monomer{C} and \monomer{A}\monomer{B}\monomer{C}\monomer{B}. Right: simplification into point charges. {\color{blue A}Blue}, {\color{red B}red} and {\color{yellow C}orange} represent \monomer{A}, \monomer{B} and \monomer{C}, respectively.}
\label{ABCABC and ABCB}
\end{figure}
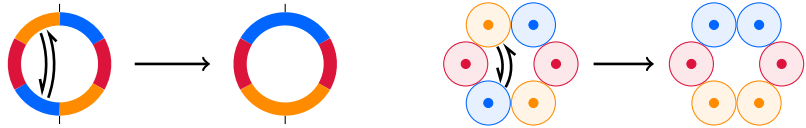

\subsubsection{Repetend \texorpdfstring{\monomer{A}\monomer{B}\monomer{A}\monomer{B}\monomer{C}}{ABABC}}

When $c_{12}$ is small, the pattern \monomer{A}\monomer{B}\monomer{A}\monomer{B}\monomer{C} can have lower free energy than \monomer{A}\monomer{B}\monomer{C}. As shown on the left side of Figure \ref{ABC and ABABC and balls with point charges}, by swapping a portion of \monomer{A} layer and a portion of \monomer{B} layer, while increasing the short range term of the free energy by $2c_{12}$, we can decrease the long range term. In fact, according to Proof of Proposition \ref{connection between OK and Ising}, this is equivalent to swapping two balls which carry charges at their centers, as shown on the right side of Figure \ref{ABC and ABABC and balls with point charges}. After the swap, the two \monomer{A} balls become farther apart, and the two \monomer{B} balls become farther apart, but \monomer{A} and \monomer{B} balls become closer. As mentioned before, \monomer{A} repels \monomer{A}, and \monomer{B} repels \monomer{B}, but \monomer{A} attracts \monomer{B}, so the potential energy between charges decreases. This argument can also be generalized to longer patterns like \monomer{A}\monomer{B}\monomer{A}\monomer{B}\monomer{A}\monomer{C}. In this way we can explain why patterns featuring well mixed \monomer{A} and \monomer{B} occupy the bottom right corners of Figures \ref{PhaseDiagramsRen} and \ref{PhaseDiagramsOhta}, where $c_{12}$ is relatively small. Under the assumption $\omega_1=\omega_2$, when $f_{13}=f_{23}$, we have $f_{11}=f_{22}$ by \eqref{decomposition of [f_ij]}, so \monomer{A}\monomer{B}\monomer{A}\monomer{B}\monomer{A}\monomer{C} has lower free energy than \monomer{B}\monomer{A}\monomer{B}\monomer{A}\monomer{B}\monomer{C} if and only if $c_{13}<c_{23}$. This scenario corresponds to the regions labelled by 11 and 12 in Figure \ref{PhaseDiagramsRen}. However, if $f_{11}<f_{22}$, then the long range term of \monomer{A}\monomer{B}\monomer{A}\monomer{B}\monomer{A}\monomer{C} is larger than that of \monomer{B}\monomer{A}\monomer{B}\monomer{A}\monomer{B}\monomer{C}, because \monomer{B} is farther apart in the latter. This explains why region 12 is smaller than region 11 in Figure \ref{PhaseDiagramsOhta}, and why the watershed between those two regions is shifted from $c_{13}=c_{23}$ towards $c_{13}<c_{23}$.

\begin{figure}[H]
\centering
\begin{tikzpicture}
\draw[color=blue A, line width = 5] ([shift={(0,0)}]-30:0.6) arc (-30:90:0.6);
\draw[color=yellow C, line width = 5] ([shift={(0,0)}]-150:0.6) arc (-150:-30:0.6);
\draw[color=red B, line width = 5] ([shift={(0,0)}]-270:0.6) arc (-270:-150:0.6);

\draw ([shift={(0,0)}]-30:0.69) -- ([shift={(0,0)}]-30:0.8);
\draw ([shift={(0,0)}]16.847:0.69) -- ([shift={(0,0)}]16.847:0.8);
\draw ([shift={(0,0)}]163.153:0.69) -- ([shift={(0,0)}]163.153:0.8);
\draw ([shift={(0,0)}]210:0.69) -- ([shift={(0,0)}]210:0.8);
\draw[line width = 1] (0.44,0.01) -- (-0.37, 0.01) -- (-0.25,0.08);
\draw[line width = 1] (-0.44,-0.1) -- (0.37, -0.1) -- (0.25,-0.17);
\draw[-to, line width = 1](1,0) -- (2,0);

\draw[color=blue A, line width = 5] ([shift={(3,0)}]16.847:0.6) arc (16.847:90:0.6);
\draw[color=red B, line width = 5] ([shift={(3,0)}]-30:0.6) arc (-30:16.847:0.6);
\draw[color=yellow C, line width = 5] ([shift={(3,0)}]-150:0.6) arc (-150:-30:0.6);
\draw[color=red B, line width = 5] ([shift={(3,0)}]90:0.6) arc (90:163.153:0.6);
\draw[color=blue A, line width = 5] ([shift={(3,0)}]210:0.6) arc (210:163.153:0.6);

\draw ([shift={(3,0)}]-30:0.69) -- ([shift={(3,0)}]-30:0.8);
\draw ([shift={(3,0)}]16.847:0.69) -- ([shift={(3,0)}]16.847:0.8);
\draw ([shift={(3,0)}]163.153:0.69) -- ([shift={(3,0)}]163.153:0.8);
\draw ([shift={(3,0)}]210:0.69) -- ([shift={(3,0)}]210:0.8);

\draw[color=blue A, fill=blue A!10, line width = 0.4] ([shift={(6,0)}]65.8698+77.3995-90:0.6) circle (0.32*1.1);
\filldraw[color=blue A] ([shift={(6,0)}]65.8698+77.3995-90:0.6) circle (0.066*1.1);
\draw[color=blue A, fill=blue A!10, line width = 0.4] ([shift={(6,0)}]77.3995-90:0.6) circle (0.32*0.9);
\filldraw[color=blue A] ([shift={(6,0)}]77.3995-90:0.6) circle (0.066*0.9);
\draw[color=yellow C, fill=yellow C!10, line width = 0.4] ([shift={(6,0)}]-90:0.6) circle (0.32*1.4);
\filldraw[color=yellow C] ([shift={(6,0)}]-90:0.6) circle (0.066*1.4);
\draw[color=red B, fill=red B!10, line width = 0.4] ([shift={(6,0)}]270-77.3995:0.6) circle (0.32*0.9);
\filldraw[color=red B] ([shift={(6,0)}]270-77.3995:0.6) circle (0.066*0.9);
\draw[color=red B, fill=red B!10, line width = 0.4] ([shift={(6,0)}]270-65.8698-77.3995:0.6) circle (0.32*1.1);
\filldraw[color=red B] ([shift={(6,0)}]270-65.8698-77.3995:0.6) circle (0.066*1.1);

\draw[line width = 1] (6+0.31,0.05) -- (6-0.25, 0.05) -- (6-0.13,0.12);
\draw[line width = 1] (6-0.28,-0.06) -- (6+0.22, -0.06) -- (6+0.10,-0.13);
\draw[-to, line width = 1](7.1,0) -- (7.9,0);

\draw[color=blue A, fill=blue A!10, line width = 0.4] ([shift={(9,0)}]65.8698+77.3995-90:0.6) circle (0.32*1.1);
\filldraw[color=blue A] ([shift={(9,0)}]65.8698+77.3995-90:0.6) circle (0.066*1.1);
\draw[color=red B, fill=red B!10, line width = 0.4] ([shift={(9,0)}]77.3995-90:0.6) circle (0.32*0.9);
\filldraw[color=red B] ([shift={(9,0)}]77.3995-90:0.6) circle (0.066*0.9);
\draw[color=yellow C, fill=yellow C!10, line width = 0.4] ([shift={(9,0)}]-90:0.6) circle (0.32*1.4);
\filldraw[color=yellow C] ([shift={(9,0)}]-90:0.6) circle (0.066*1.4);
\draw[color=blue A, fill=blue A!10, line width = 0.4] ([shift={(9,0)}]270-77.3995:0.6) circle (0.32*0.9);
\filldraw[color=blue A] ([shift={(9,0)}]270-77.3995:0.6) circle (0.066*0.9);
\draw[color=red B, fill=red B!10, line width = 0.4] ([shift={(9,0)}]270-65.8698-77.3995:0.6) circle (0.32*1.1);
\filldraw[color=red B] ([shift={(9,0)}]270-65.8698-77.3995:0.6) circle (0.066*1.1);
\end{tikzpicture}
\caption{Left: patterns \monomer{A}\monomer{B}\monomer{C} and \monomer{A}\monomer{B}\monomer{A}\monomer{B}\monomer{C}. Right: simplification into point charges. {\color{blue A}Blue}, {\color{red B}red} and {\color{yellow C}orange} represent \monomer{A}, \monomer{B} and \monomer{C}, respectively.}
\label{ABC and ABABC and balls with point charges}
\end{figure}
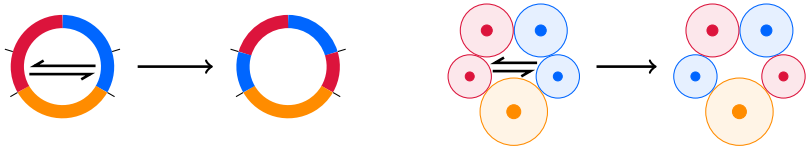

\subsubsection{Repetend \texorpdfstring{\monomer{A}\monomer{B}\monomer{A}\monomer{C}\monomer{B}\monomer{A}\monomer{B}\monomer{C}}{ABACBABC}}

Under the assumptions $\omega_1=\omega_2$, $c_{13}=c_{23}$ and $f_{13}=f_{23}$, the pattern \monomer{A}\monomer{B}\monomer{A}\monomer{C}\monomer{A}\monomer{B}\monomer{A}\monomer{C} with uniform layer widths has the same free energy as \monomer{B}\monomer{A}\monomer{B}\monomer{C}\monomer{B}\monomer{A}\monomer{B}\monomer{C} by symmetry. The pattern \monomer{A}\monomer{B}\monomer{A}\monomer{C}\monomer{B}\monomer{A}\monomer{B}\monomer{C}, which can be viewed as a hybrid or transitional or intermediate stage between the above two, can have lower free energy. As shown in the top of Figure \ref{BABCBABC and BABCABAC}, this intermediate stage can be reached in two steps. In the first step the layer types are changed, but the interfaces do not move. In the second step the wide \monomer{A} and \monomer{B} layers (located within 11$\sim$13 and 5$\sim$7 o'clock directions, respectively) are relaxed to shrink, while the narrow ones (1$\sim$2, 4$\sim$5, 7$\sim$8, and 10$\sim$11 o'clock) are relaxed to expand. The first step is equivalent to reversing the orientations of two dipoles, as shown in the bottom left of Figure \ref{BABCBABC and BABCABAC}. According to Lemma \ref{BABCBABC and BABCABAC have the same energy}, the potential energy between charges does not change after the first step. But it decreases after the second step, which is equivalent to swapping four tiny balls as shown in the bottom right of Figure \ref{BABCBABC and BABCABAC}. In fact, after the swapping, \monomer{A} balls become farther apart, and \monomer{B} balls become farther apart, but \monomer{A} balls become closer to \monomer{B} balls. As a result, the regions labelled by 5 arise as watersheds between regions 2 and 3 in Figure \ref{PhaseDiagramsRen}.

\begin{figure}[H]
\centering
\begin{tikzpicture}
\draw[color=red B, line width = 5] ([shift={(0,0)}]60:0.6) arc (60:120:0.6);
\draw[color=blue A, line width = 5] ([shift={(0,0)}]30:0.6) arc (30:60:0.6);
\draw[color=yellow C, line width = 5] ([shift={(0,0)}]-30:0.6) arc (-30:30:0.6);
\draw[color=blue A, line width = 5] ([shift={(0,0)}]-60:0.6) arc (-60:-30:0.6);
\draw[color=red B, line width = 5] ([shift={(0,0)}]-120:0.6) arc (-120:-60:0.6);
\draw[color=blue A, line width = 5] ([shift={(0,0)}]-150:0.6) arc (-150:-120:0.6);
\draw[color=yellow C, line width = 5] ([shift={(0,0)}]-210:0.6) arc (-210:-150:0.6);
\draw[color=blue A, line width = 5] ([shift={(0,0)}]-240:0.6) arc (-240:-210:0.6);

\draw[line width = 1] (-0.05,0.48) arc (0:-110:0.18) -- (-0.23,0.24);
\draw[line width = 1] (0.05,0.48) arc (180:290:0.18) -- (0.23,0.24);
\draw[line width = 1] (-0.45,0.55) arc (200:50:0.18) -- (-0.21,0.84);
\draw[line width = 1] (0.45,0.55) arc (-20:130:0.18) -- (0.21,0.84);
\draw[-to, line width = 1](1,0) -- (2,0);

\draw[color=blue A, line width = 5] ([shift={(3,0)}]60:0.6) arc (60:120:0.6);
\draw[color=red B, line width = 5] ([shift={(3,0)}]30:0.6) arc (30:60:0.6);
\draw[color=yellow C, line width = 5] ([shift={(3,0)}]-30:0.6) arc (-30:30:0.6);
\draw[color=blue A, line width = 5] ([shift={(3,0)}]-60:0.6) arc (-60:-30:0.6);
\draw[color=red B, line width = 5] ([shift={(3,0)}]-120:0.6) arc (-120:-60:0.6);
\draw[color=blue A, line width = 5] ([shift={(3,0)}]-150:0.6) arc (-150:-120:0.6);
\draw[color=yellow C, line width = 5] ([shift={(3,0)}]-210:0.6) arc (-210:-150:0.6);
\draw[color=red B, line width = 5] ([shift={(3,0)}]-240:0.6) arc (-240:-210:0.6);

\draw[line width = 1] (3-0.24,0.41) arc (160:195:1.32) -- (3-0.37,-0.28);
\draw[line width = 1] (3-0.21,-0.46) arc (-20:18:1.3) -- (3-0.1,0.29);
\draw[line width = 1] (3+0.24,0.41) arc (20:-15:1.32) -- (3+0.37,-0.28);
\draw[line width = 1] (3+0.21,-0.46) arc (200:162:1.3) -- (3+0.1,0.29);
\draw[-to, line width = 1](4,0) -- (5,0);

\draw ([shift={(3,0)}]60:0.69) -- ([shift={(3,0)}]60:0.8);
\draw ([shift={(3,0)}]64.641:0.69) -- ([shift={(3,0)}]64.641:0.8);
\draw ([shift={(3,0)}]115.359:0.69) -- ([shift={(3,0)}]115.359:0.8);
\draw ([shift={(3,0)}]120:0.69) -- ([shift={(3,0)}]120:0.8);
\draw ([shift={(3,0)}]-60:0.69) -- ([shift={(3,0)}]-60:0.8);
\draw ([shift={(3,0)}]-64.641:0.69) -- ([shift={(3,0)}]-64.641:0.8);
\draw ([shift={(3,0)}]-115.359:0.69) -- ([shift={(3,0)}]-115.359:0.8);
\draw ([shift={(3,0)}]-120:0.69) -- ([shift={(3,0)}]-120:0.8);

\draw[color=blue A, line width = 5] ([shift={(6,0)}]64.641:0.6) arc (64.641:115.359:0.6);
\draw[color=red B, line width = 5] ([shift={(6,0)}]30:0.6) arc (30:64.641:0.6);
\draw[color=yellow C, line width = 5] ([shift={(6,0)}]-30:0.6) arc (-30:30:0.6);
\draw[color=blue A, line width = 5] ([shift={(6,0)}]-64.641:0.6) arc (-64.641:-30:0.6);
\draw[color=red B, line width = 5] ([shift={(6,0)}]-115.359:0.6) arc (-115.359:-64.641:0.6);
\draw[color=blue A, line width = 5] ([shift={(6,0)}]-150:0.6) arc (-150:-115.359:0.6);
\draw[color=yellow C, line width = 5] ([shift={(6,0)}]-210:0.6) arc (-210:-150:0.6);
\draw[color=red B, line width = 5] ([shift={(6,0)}]-244.641:0.6) arc (-244.641:-210:0.6);

\draw ([shift={(6,0)}]60:0.69) -- ([shift={(6,0)}]60:0.8);
\draw ([shift={(6,0)}]64.641:0.69) -- ([shift={(6,0)}]64.641:0.8);
\draw ([shift={(6,0)}]115.359:0.69) -- ([shift={(6,0)}]115.359:0.8);
\draw ([shift={(6,0)}]120:0.69) -- ([shift={(6,0)}]120:0.8);
\draw ([shift={(6,0)}]-60:0.69) -- ([shift={(6,0)}]-60:0.8);
\draw ([shift={(6,0)}]-64.641:0.69) -- ([shift={(6,0)}]-64.641:0.8);
\draw ([shift={(6,0)}]-115.359:0.69) -- ([shift={(6,0)}]-115.359:0.8);
\draw ([shift={(6,0)}]-120:0.69) -- ([shift={(6,0)}]-120:0.8);
\end{tikzpicture}\\
\begin{tikzpicture}
\draw[color=yellow C, fill=yellow C!10, line width = 0.4] ([shift={(0,0)}]0:0.6) circle (0.165*1.41);
\filldraw[color=yellow C] ([shift={(0,0)}]0:0.6) circle (0.035*1.41);
\draw[color=blue A, fill=blue A!10, line width = 0.4] ([shift={(0,0)}]40.144:0.6) circle (0.165);
\filldraw[color=blue A] ([shift={(0,0)}]40.144:0.6) circle (0.035);
\draw[color=red B, fill=red B!10, line width = 0.4] ([shift={(0,0)}]73.381:0.6) circle (0.165);
\filldraw[color=red B] ([shift={(0,0)}]73.381:0.6) circle (0.035);
\draw[color=red B, fill=red B!10, line width = 0.4] ([shift={(0,0)}]180-73.381:0.6) circle (0.165);
\filldraw[color=red B] ([shift={(0,0)}]180-73.381:0.6) circle (0.035);
\draw[color=blue A, fill=blue A!10, line width = 0.4] ([shift={(0,0)}]180-40.144:0.6) circle (0.165);
\filldraw[color=blue A] ([shift={(0,0)}]180-40.144:0.6) circle (0.035);
\draw[color=yellow C, fill=yellow C!10, line width = 0.4] ([shift={(0,0)}]180:0.6) circle (0.165*1.41);
\filldraw[color=yellow C] ([shift={(0,0)}]180:0.6) circle (0.035*1.41);
\draw[color=blue A, fill=blue A!10, line width = 0.4] ([shift={(0,0)}]-40.144:0.6) circle (0.165);
\filldraw[color=blue A] ([shift={(0,0)}]-40.144:0.6) circle (0.035);
\draw[color=red B, fill=red B!10, line width = 0.4] ([shift={(0,0)}]-73.381:0.6) circle (0.165);
\filldraw[color=red B] ([shift={(0,0)}]-73.381:0.6) circle (0.035);
\draw[color=red B, fill=red B!10, line width = 0.4] ([shift={(0,0)}]73.381-180:0.6) circle (0.165);
\filldraw[color=red B] ([shift={(0,0)}]73.381-180:0.6) circle (0.035);
\draw[color=blue A, fill=blue A!10, line width = 0.4] ([shift={(0,0)}]40.144-180:0.6) circle (0.165);
\filldraw[color=blue A] ([shift={(0,0)}]40.144-180:0.6) circle (0.035);

\draw[line width = 1] (-0.1,0.4) arc (0:-90:0.17) -- (-0.19,0.19);
\draw[line width = 1] (0.1,0.4) arc (180:270:0.17) -- (0.19,0.19);
\draw[line width = 1] (-0.56,0.55) arc (200:65:0.18) -- (-0.37,0.86);
\draw[line width = 1] (0.56,0.55) arc (-20:115:0.18) -- (0.37,0.86);
\draw[-to, line width = 1](1.1,0) -- (1.9,0);

\draw[color=yellow C, fill=yellow C!10, line width = 0.4] ([shift={(3,0)}]0:0.6) circle (0.165*1.41);
\filldraw[color=yellow C] ([shift={(3,0)}]0:0.6) circle (0.035*1.41);
\draw[color=red B, fill=red B!10, line width = 0.4] ([shift={(3,0)}]40.144:0.6) circle (0.165);
\filldraw[color=red B] ([shift={(3,0)}]40.144:0.6) circle (0.035);
\draw[color=blue A, fill=blue A!10, line width = 0.4] ([shift={(3,0)}]73.381:0.6) circle (0.165);
\filldraw[color=blue A] ([shift={(3,0)}]73.381:0.6) circle (0.035);
\draw[color=blue A, fill=blue A!10, line width = 0.4] ([shift={(3,0)}]180-73.381:0.6) circle (0.165);
\filldraw[color=blue A] ([shift={(3,0)}]180-73.381:0.6) circle (0.035);
\draw[color=red B, fill=red B!10, line width = 0.4] ([shift={(3,0)}]180-40.144:0.6) circle (0.165);
\filldraw[color=red B] ([shift={(3,0)}]180-40.144:0.6) circle (0.035);
\draw[color=yellow C, fill=yellow C!10, line width = 0.4] ([shift={(3,0)}]180:0.6) circle (0.165*1.41);
\filldraw[color=yellow C] ([shift={(3,0)}]180:0.6) circle (0.035*1.41);
\draw[color=blue A, fill=blue A!10, line width = 0.4] ([shift={(3,0)}]-40.144:0.6) circle (0.165);
\filldraw[color=blue A] ([shift={(3,0)}]-40.144:0.6) circle (0.035);
\draw[color=red B, fill=red B!10, line width = 0.4] ([shift={(3,0)}]-73.381:0.6) circle (0.165);
\filldraw[color=red B] ([shift={(3,0)}]-73.381:0.6) circle (0.035);
\draw[color=red B, fill=red B!10, line width = 0.4] ([shift={(3,0)}]73.381-180:0.6) circle (0.165);
\filldraw[color=red B] ([shift={(3,0)}]73.381-180:0.6) circle (0.035);
\draw[color=blue A, fill=blue A!10, line width = 0.4] ([shift={(3,0)}]40.144-180:0.6) circle (0.165);
\filldraw[color=blue A] ([shift={(3,0)}]40.144-180:0.6) circle (0.035);

\draw (3,0.63) -- (3,0.77);
\node at (3,0.9) {\fontsize{7pt}{0pt}\selectfont0};
\draw (3.84,0) -- (3.9,0);
\node at (4.1,0) {\fontsize{9pt}{0pt}\selectfont$\frac14$};
\draw ([shift={(3,0)}]73.381:0.773) -- ([shift={(3,0)}]73.381:0.83);
\node at (3.32,0.87) {\fontsize{8pt}{0pt}\selectfont$x_1$};
\draw ([shift={(3,0)}]40.144:0.773) -- ([shift={(3,0)}]40.144:0.83);
\node at (3.73,0.62) {\fontsize{8pt}{0pt}\selectfont$x_2$};
\draw ([shift={(3,0)}]-40.144:0.773) -- ([shift={(3,0)}]-40.144:0.83);
\node at (3.8,-0.53) {\fontsize{8pt}{0pt}\selectfont$y_2$};
\draw ([shift={(3,0)}]-73.381:0.773) -- ([shift={(3,0)}]-73.381:0.83);
\node at (3.41,-0.83) {\fontsize{8pt}{0pt}\selectfont$y_1$};
\draw ([shift={(3,0)}]73.381-180:0.773) -- ([shift={(3,0)}]73.381-180:0.83);
\node at (2.64,-0.83) {\fontsize{8pt}{0pt}\selectfont$y_3$};
\draw ([shift={(3,0)}]40.144-180:0.773) -- ([shift={(3,0)}]40.144-180:0.83);
\node at (2.22,-0.53) {\fontsize{8pt}{0pt}\selectfont$y_4$};
\draw ([shift={(3,0)}]180-40.144:0.773) -- ([shift={(3,0)}]180-40.144:0.83);
\node at (2.25,0.63) {\fontsize{8pt}{0pt}\selectfont$x_4$};
\draw ([shift={(3,0)}]180-73.381:0.773) -- ([shift={(3,0)}]180-73.381:0.83);
\node at (2.6,0.87) {\fontsize{8pt}{0pt}\selectfont$x_3$};

\draw[color=yellow C, fill=yellow C!10, line width = 0.4] ([shift={(6,0)}]0:0.6) circle (0.153*1.43);
\filldraw[color=yellow C] ([shift={(6,0)}]0:0.6) circle (0.032*1.43);
\draw[color=red B, fill=red B!10, line width = 0.4] ([shift={(6,0)}]37.5:0.6) circle (0.153);
\filldraw[color=red B] ([shift={(6,0)}]37.5:0.6) circle (0.032);
\draw[color=blue A, fill=blue A!10, line width = 0.15] ([shift={(6,0)}]57.52:0.6) circle (0.153*0.3);
\filldraw[color=blue A] ([shift={(6,0)}]57.52:0.6) circle (0.009);
\draw[color=blue A, fill=blue A!10, line width = 0.4] ([shift={(6,0)}]76.05:0.6) circle (0.153*0.9);
\filldraw[color=blue A] ([shift={(6,0)}]76.05:0.6) circle (0.032);
\draw[color=blue A, fill=blue A!10, line width = 0.4] ([shift={(6,0)}]180-76.05:0.6) circle (0.153*0.9);
\filldraw[color=blue A] ([shift={(6,0)}]180-76.05:0.6) circle (0.032);
\draw[color=blue A, fill=blue A!10, line width = 0.15] ([shift={(6,0)}]180-57.52:0.6) circle (0.153*0.3);
\filldraw[color=blue A] ([shift={(6,0)}]180-57.52:0.6) circle (0.009);
\draw[color=red B, fill=red B!10, line width = 0.4] ([shift={(6,0)}]180-37.5:0.6) circle (0.153);
\filldraw[color=red B] ([shift={(6,0)}]180-37.5:0.6) circle (0.032);
\draw[color=yellow C, fill=yellow C!10, line width = 0.4] ([shift={(6,0)}]180:0.6) circle (0.153*1.43);
\filldraw[color=yellow C] ([shift={(6,0)}]180:0.6) circle (0.032*1.43);
\draw[color=blue A, fill=blue A!10, line width = 0.4] ([shift={(6,0)}]180+37.5:0.6) circle (0.153);
\filldraw[color=blue A] ([shift={(6,0)}]180+37.5:0.6) circle (0.032);
\draw[color=red B, fill=red B!10, line width = 0.15] ([shift={(6,0)}]180+57.52:0.6) circle (0.153*0.3);
\filldraw[color=red B] ([shift={(6,0)}]180+57.52:0.6) circle (0.009);
\draw[color=red B, fill=red B!10, line width = 0.4] ([shift={(6,0)}]180+76.05:0.6) circle (0.153*0.9);
\filldraw[color=red B] ([shift={(6,0)}]180+76.05:0.6) circle (0.032);
\draw[color=red B, fill=red B!10, line width = 0.4] ([shift={(6,0)}]360-76.05:0.6) circle (0.153*0.9);
\filldraw[color=red B] ([shift={(6,0)}]360-76.05:0.6) circle (0.032);
\draw[color=red B, fill=red B!10, line width = 0.15] ([shift={(6,0)}]360-57.52:0.6) circle (0.153*0.3);
\filldraw[color=red B] ([shift={(6,0)}]360-57.52:0.6) circle (0.009);
\draw[color=blue A, fill=blue A!10, line width = 0.4] ([shift={(6,0)}]360-37.5:0.6) circle (0.153);
\filldraw[color=blue A] ([shift={(6,0)}]360-37.5:0.6) circle (0.032);

\draw[line width = 1] (6-0.37,0.57) arc (85:270:0.57) -- (6-0.52,-0.63);
\draw[line width = 1] (6-0.29,-0.46) arc (-20:17:1.34) -- (6-0.17,0.29);
\draw[line width = 1] (6+0.37,0.57) arc (95:-90:0.57) -- (6+0.52,-0.63);
\draw[line width = 1] (6+0.29,-0.46) arc (200:163:1.34) -- (6+0.17,0.29);
\draw[-to, line width = 1](7.1,0) -- (7.9,0);

\draw[color=yellow C, fill=yellow C!10, line width = 0.4] ([shift={(9,0)}]0:0.6) circle (0.153*1.43);
\filldraw[color=yellow C] ([shift={(9,0)}]0:0.6) circle (0.032*1.43);
\draw[color=red B, fill=red B!10, line width = 0.4] ([shift={(9,0)}]37.5:0.6) circle (0.153);
\filldraw[color=red B] ([shift={(9,0)}]37.5:0.6) circle (0.032);
\draw[color=red B, fill=red B!10, line width = 0.15] ([shift={(9,0)}]57.52:0.6) circle (0.153*0.3);
\filldraw[color=red B] ([shift={(9,0)}]57.52:0.6) circle (0.009);
\draw[color=blue A, fill=blue A!10, line width = 0.4] ([shift={(9,0)}]76.05:0.6) circle (0.153*0.9);
\filldraw[color=blue A] ([shift={(9,0)}]76.05:0.6) circle (0.032);
\draw[color=blue A, fill=blue A!10, line width = 0.4] ([shift={(9,0)}]180-76.05:0.6) circle (0.153*0.9);
\filldraw[color=blue A] ([shift={(9,0)}]180-76.05:0.6) circle (0.032);
\draw[color=red B, fill=red B!10, line width = 0.15] ([shift={(9,0)}]180-57.52:0.6) circle (0.153*0.3);
\filldraw[color=red B] ([shift={(9,0)}]180-57.52:0.6) circle (0.009);
\draw[color=red B, fill=red B!10, line width = 0.4] ([shift={(9,0)}]180-37.5:0.6) circle (0.153);
\filldraw[color=red B] ([shift={(9,0)}]180-37.5:0.6) circle (0.032);
\draw[color=yellow C, fill=yellow C!10, line width = 0.4] ([shift={(9,0)}]180:0.6) circle (0.153*1.43);
\filldraw[color=yellow C] ([shift={(9,0)}]180:0.6) circle (0.032*1.43);
\draw[color=blue A, fill=blue A!10, line width = 0.4] ([shift={(9,0)}]180+37.5:0.6) circle (0.153);
\filldraw[color=blue A] ([shift={(9,0)}]180+37.5:0.6) circle (0.032);
\draw[color=blue A, fill=blue A!10, line width = 0.15] ([shift={(9,0)}]180+57.52:0.6) circle (0.153*0.3);
\filldraw[color=blue A] ([shift={(9,0)}]180+57.52:0.6) circle (0.009);
\draw[color=red B, fill=red B!10, line width = 0.4] ([shift={(9,0)}]180+76.05:0.6) circle (0.153*0.9);
\filldraw[color=red B] ([shift={(9,0)}]180+76.05:0.6) circle (0.032);
\draw[color=red B, fill=red B!10, line width = 0.4] ([shift={(9,0)}]360-76.05:0.6) circle (0.153*0.9);
\filldraw[color=red B] ([shift={(9,0)}]360-76.05:0.6) circle (0.032);
\draw[color=blue A, fill=blue A!10, line width = 0.15] ([shift={(9,0)}]360-57.52:0.6) circle (0.153*0.3);
\filldraw[color=blue A] ([shift={(9,0)}]360-57.52:0.6) circle (0.009);
\draw[color=blue A, fill=blue A!10, line width = 0.4] ([shift={(9,0)}]360-37.5:0.6) circle (0.153);
\filldraw[color=blue A] ([shift={(9,0)}]360-37.5:0.6) circle (0.032);
\end{tikzpicture}
\caption{Top: patterns \monomer{A}\monomer{B}\monomer{A}\monomer{C}\monomer{A}\monomer{B}\monomer{A}\monomer{C} and \monomer{A}\monomer{B}\monomer{A}\monomer{C}\monomer{B}\monomer{A}\monomer{B}\monomer{C}. Bottom: simplification into point charges. {\color{blue A}Blue}, {\color{red B}red} and {\color{yellow C}orange} represent \monomer{A}, \monomer{B} and \monomer{C}, respectively.}
\label{BABCBABC and BABCABAC}
\end{figure}
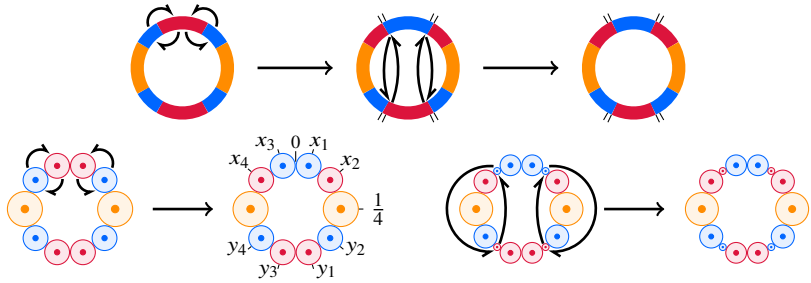

\begin{lemma}
\label{BABCBABC and BABCABAC have the same energy}
Under the assumption $f_{13}=f_{23}$, the two arrangements shown in the bottom left of Figure \ref{BABCBABC and BABCABAC} have the same potential energy (defined by \eqref{electrostatic potential energy of A, B and C}) if all the \monomer{A} and \monomer{B} balls have the same size.
\end{lemma}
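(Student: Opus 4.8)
The plan is to expand $U$ from \eqref{electrostatic potential energy of A, B and C} as a double sum $U=\frac12\sum_{k,m}Q_k\,f_{i_ki_m}\,Q_m\,G(z_k,z_m)$ over the point charges, where $z_k$ is the $k$-th ball center, $Q_k$ its charge, and $i_k\in\{1,2,3\}$ its type, and to split this sum into the \monomer{C}--\monomer{C}, \monomer{C}--\monomer{A}\monomer{B}, and \monomer{A}\monomer{B}--\monomer{A}\monomer{B} contributions. The first is unaffected by interchanging \monomer{A} and \monomer{B} labels because it involves neither the positions nor the charges of the \monomer{A}/\monomer{B} balls. In the second, every pair $(\monomer{C},\,\monomer{A}/\monomer{B})$ carries the coefficient $f_{13}$ or $f_{23}$, which are equal by hypothesis, while all \monomer{A}/\monomer{B} balls carry a common charge $q$ (they have a common size and there are equally many of each type); hence this contribution too depends only on the ball positions, not on the labelling. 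So only the \monomer{A}\monomer{B}--\monomer{A}\monomer{B} contribution is in play.

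For it, I would first invoke the decomposition \eqref{decomposition of [f_ij]}: together with $f_{13}=f_{23}$ it forces $f_{11}=f_{22}=:\mu$; set $f_{12}=:\nu$. Encoding an \monomer{A}/\monomer{B} ball at $w$ by $\sigma_w\in\{+1,-1\}$ ($+1$ for \monomer{A}), one has $f_{t(w)t(w')}=\frac{\mu+\nu}{2}+\frac{\mu-\nu}{2}\sigma_w\sigma_{w'}$, so the \monomer{A}\monomer{B}--\monomer{A}\monomer{B} contribution equals a labelling-independent multiple of $\sum_{w,w'}G(w,w')$ plus $\frac{(\mu-\nu)q^2}{4}E$, where $E:=\sum_{w,w'}\sigma_w\sigma_{w'}G(w,w')$. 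So it suffices to show $E$ is the same for the two arrangements. Reading off the figure, the two arrangements share all ball positions and differ only in that a block $B$ of four consecutive \monomer{A}/\monomer{B} balls with label pattern \monomer{A}\monomer{B}\monomer{B}\monomer{A} becomes \monomer{B}\monomer{A}\monomer{A}\monomer{B} --- the effect of reversing the two dipoles --- i.e.\ $\sigma$ is negated on $B$ and unchanged on the complementary block $B'$. Writing $\psi=q\sum_w\sigma_w\delta_w=\psi_B+\psi_{B'}$ and using symmetry of $G$, both values of $E$ equal $\langle\psi_B,G\psi_B\rangle+2\langle\psi_B,G\psi_{B'}\rangle+\langle\psi_{B'},G\psi_{B'}\rangle$ with $\psi_B$ replaced by $\pm\psi_B$; the two outer terms are common to both (being even in $\psi_B$), so the difference of the two energies is $-4\langle\psi_B,G\psi_{B'}\rangle$, and everything reduces to the vanishing of the cross-energy $\langle\psi_B,G\psi_{B'}\rangle$.

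The cross-energy vanishes by a moment argument that uses the geometry. Since the \monomer{A}/\monomer{B} balls have a common width $h$ and the two \monomer{C} balls sit antipodally, the four balls of $B$ are centered at $\frac14+\alpha$ and those of $B'$ at $\frac34+\beta$ with $\alpha,\beta\in\{\pm h/2,\pm 3h/2\}$, so the distance between any ball of $B$ and any ball of $B'$ equals $\frac12-|\alpha-\beta|$. Substituting into the explicit form \eqref{expression of G on 1-D periodic cell} --- and using that $G$ is symmetric about the diametrically opposite point, so it carries no linear term there --- this gives $G=\frac12(\alpha-\beta)^2-\frac1{24}$, a polynomial of degree $\le2$ in $(\alpha,\beta)$. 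Expanding $(\alpha-\beta)^2=\alpha^2-2\alpha\beta+\beta^2$, each term of $\langle\psi_B,G\psi_{B'}\rangle$ factors through the total mass of $\psi_B$, the total mass of $\psi_{B'}$, or the product of their first moments; all of these are zero, since the weight pattern $(+,-,-,+)$ at positions symmetric about the block's center has zero total mass and zero first moment (likewise for $B'$). Hence $\langle\psi_B,G\psi_{B'}\rangle=0$, so $E$ --- and therefore $U$ --- is unchanged.

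The main obstacle, and the only non-routine step, is this last computation: one has to notice that flipping the four-ball block negates $\psi_B$ (leaving the block's self-energy untouched) and that the block's interaction with its antipodal partner is killed by the vanishing zeroth and first moments of the $(+,-,-,+)$ weight together with the absence of a linear term in $G$ at its symmetry point --- which in turn relies on the \monomer{A}/\monomer{B} clusters being small and centered exactly at antipodal points, a consequence of the equal-size hypothesis and the symmetric placement of the \monomer{C} balls. The reductions leading to it (the \monomer{C}--\monomer{C} and \monomer{C}--\monomer{A}\monomer{B} parts being labelling-independent, and the passage to the Ising-type energy $E$ via $f_{11}=f_{22}$) are bookkeeping.
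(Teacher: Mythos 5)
Your proof is correct and follows essentially the same route as the paper's: both arguments reduce the change in energy to the signed cross-interaction between the two antipodal four-ball blocks (using $f_{13}=f_{23}$, hence $f_{11}=f_{22}$ via \eqref{decomposition of [f_ij]}, to discard all other contributions), and both kill that cross term by combining the quadratic form of $G$ in \eqref{expression of G on 1-D periodic cell} with the vanishing signed sums of the $(+,-,-,+)$ pattern. The only cosmetic difference is that the paper phrases the final step as ``the signed potential $\sum_{m}(-1)^m G(x,y_m)$ is linear and symmetric on the opposite arc, hence constant,'' whereas you expand $G$ explicitly as $\tfrac12(\alpha-\beta)^2-\tfrac1{24}$ and invoke the vanishing zeroth and first moments of each block --- the same computation in different clothing.
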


\begin{proof}
Because of the symmetric assumption about \monomer{A} and \monomer{B}, the transformation from the first arrangement to the second one only changes the potential energy between the top four point charges and the bottom four. Before the transformation, the total potential energy is
\begin{equation}
\label{point charge ABACABAC}
U=\sum_{k=1}^4\sum_{m=1}^4f_{\,i_k\,j_m}\,G(x_k,y_m)+\text{constant},
\end{equation}
where $x_k$ and $y_m$ are labelled in the bottom left of Figure \ref{BABCBABC and BABCABAC}, and $i_k=j_k=\big(3-(-1)^k\big)/2$. After the transformation, the $i_k$ in \eqref{point charge ABACABAC} should be replaced by $3\!-\!i_k$, and the increase in $U$ is
\begin{equation*}
\begin{aligned}
&\sum_{k=1}^4\sum_{m=1}^4(-1)^{k+m}\,(f_{12}\!-\!f_{11})\,G(x_k,y_m)\\
=\;&(f_{12}\!-\!f_{11})\!\sum_{k=1}^4(-1)^{k}\sum_{m=1}^4(-1)^{m}\,G(x_k,y_m),
\end{aligned}
\end{equation*}
because we have $f_{\;3-i_k\;j_m}-f_{\,i_k\,j_m}=(-1)^{k+m}\,(f_{12}\!-\!f_{11})$ under the assumption $f_{13}=f_{23}$ and thus $f_{11}=f_{22}$ by \eqref{decomposition of [f_ij]}. By \eqref{expression of G on 1-D periodic cell}, the function $\sum_{m=1}^4(-1)^{m}\,G(x,y_m)$ is a quadratic function on $[0,1]\backslash(y_2,y_4)$ satisfying periodic boundary conditions, with the coefficient of $x^2$ being 0, and it is symmetric with respect to $x=1/2$ because of the vertical symmetry of the arrangement, so it must be constant for $x\in[0,1]\backslash(y_2,y_4)$. Therefore $U$ increases by 0.
\end{proof}

\begin{remark}$ $
\begin{enumerate}[label=(\roman*)]

\item If we assume $f_{13}>f_{23}$ instead of $f_{13}=f_{23}$, then we have $f_{11}<f_{22}$ by \eqref{decomposition of [f_ij]}, and the long range term is larger for the repetend \monomer{A}\monomer{B}\monomer{A}\monomer{C} than \monomer{B}\monomer{A}\monomer{B}\monomer{C} (because in the later, \monomer{B} is farther apart, and \monomer{A} is farther from \monomer{C}, but \monomer{B} is closer to \monomer{C}). To compensate for this and equalize their free energy, we need the condition $c_{13}<c_{23}$ instead of $c_{13}=c_{23}$, which is why the region labelled by 5 in Figure \ref{PhaseDiagramsOhta} is shifted towards $c_{13}<c_{23}$.

\item Note that our arguments can be readily adapted for longer patterns like \monomer{A}\monomer{B}\monomer{A}\monomer{B}\monomer{A}\monomer{C}\monomer{B}\monomer{A}\monomer{B}\monomer{A}\monomer{B}\monomer{C}, because each layer may be finely discretized into many balls, so that all the balls have (almost) the same size, and Lemma \ref{BABCBABC and BABCABAC have the same energy} can be generalized to an arbitrary number of \monomer{A} and \monomer{B} balls. Alternatively, Lemma \ref{BABCBABC and BABCABAC have the same energy} can be generalized to the continuum level. In this way we can explain the regions labelled by 17, 18 and 19 in Figures \ref{PhaseDiagramsRen} and \ref{PhaseDiagramsOhta}.

\end{enumerate}
\end{remark}

\section{Discussion}
\label{discussion}

The results in Section \ref{Phase diagrams: most favored repetends} are limited to 1-D where the geometry is restricted, but it is of natural interest to explore higher dimensions. For approximately equal $\{\omega_i\}$, the 2-D and 3-D minimizers are expected to be lamellar in certain parameter regions. However, when at least one of $\{\omega_i\}$ is small, the 2-D and 3-D minimizers are expected to take droplet-like shapes which have lower interfacial energy than lamellae (see, e.g., \cite[Figures 3 and 5]{bates2000block}). Therefore, it is unclear whether the 2-D and 3-D minimizers can have lamellar patterns like \monomer{A}\monomer{B}\monomer{C}\monomer{B} and \monomer{A}\monomer{B}\monomer{A}\monomer{C}\monomer{B}\monomer{A}\monomer{B}\monomer{C} which occupy the fringes of the top phase diagram in Figure \ref{PhaseDiagramsRen}.

To examine whether our 1-D results can be extended to higher dimensions, we may compare our lamellar candidates to some 2-D or 3-D droplet-like candidates, and determine the threshold of $\omega_i$ above which lamellae are preferred over droplets. To this end, the first step would be to figure out the corresponding 2-D or 3-D candidates. In 2-D, some stationary points have been found under various parameters. Among them, it is plausible that the repetend \monomer{A}\monomer{B}\monomer{C} corresponds to double bubbles \cite{ren2015double}, that \monomer{A}\monomer{B}\monomer{A}\monomer{C} corresponds to single bubbles \cite{ren2019stationary} or core-shells \cite{ren2017stationary} (with \monomer{A} being the shells and \monomer{B} being the cores), and that \monomer{A}\monomer{B}\monomer{A}\monomer{C}\monomer{B}\monomer{A}\monomer{B}\monomer{C} corresponds to core-shells with \monomer{A} and \monomer{B} taking turns as the shells and cores (although such alternating core-shells have not been found in the literature yet). Also in 2-D, for certain parameters the global minimizers have been found in \cite{alama2019periodic} to be coexisting single and double bubbles, which might correspond to patterns like \monomer{C}\monomer{A}\monomer{C}\monomer{A}\monomer{B} (where \monomer{A} coexists with \monomer{A}\monomer{B}) in our 1-D setting. However, the results in \cite{alama2019periodic} cannot be directly generalized from 2-D to 1-D, because their proofs rely on the singularity of the 2-D Green's function of $-\Delta$ in order to extract the leading order terms. Numerical studies indicate that \monomer{A}\monomer{B} double bubbles can also coexist with both \monomer{A} and \monomer{B} single bubbles simultaneously in 2-D \cite[Figure 4.4-(b)]{wang2019bubble}, but there is no 1-D analogue among the candidates considered in Figures \ref{PhaseDiagramsRen} and \ref{PhaseDiagramsOhta} (a possible 1-D analogue can be \monomer{C}\monomer{A}\monomer{C}\monomer{B}\monomer{A}\monomer{C}\monomer{B} with \monomer{C} as the background). In 2-D, core-shells have not yet been proved to be global minimizers, and in fact they are not minimizers without the long range term (i.e., $\gamma=0$) \cite{lawlor2014double}. It is unclear how large $\gamma$ needs to be for them to be global minimizers.

Currently we do not have any 1-D candidate corresponding to \cite[Figure 4.4-(b)]{wang2019bubble}, where \monomer{A}\monomer{B} double bubbles coexist with both \monomer{A} and \monomer{B} single bubbles.

In the context of block copolymers, there are possible physical interpretations of some repetends found in Section \ref{Phase diagrams: most favored repetends}:
\begin{itemize}

\item The repetend \monomer{A}\monomer{B}\monomer{C}\monomer{B} can be seen as the head-to-head and tail-to-tail arrangement of the triblock copolymers \monomer{A}\monomer{B}\monomer{C}, i.e., \monomer{A}\monomer{B}\monomer{C} \monomer{C}\monomer{B}\monomer{A} \monomer{A}\monomer{B}\monomer{C} \monomer{C}\monomer{B}\monomer{A} $\cdots$

\item The repetend \monomer{A}\monomer{C}\monomer{A}\monomer{B}\monomer{C}\monomer{A}\monomer{C}\monomer{B} can be seen as the head-to-head and tail-to-tail arrangement of the pentablock terpolymers \monomer{A}\monomer{C}\monomer{B}\monomer{A}\monomer{C}, i.e., \monomer{A}\monomer{C}\monomer{B}\monomer{A}\monomer{C} \monomer{C}\monomer{A}\monomer{B}\monomer{C}\monomer{A} \monomer{A}\monomer{C}\monomer{B}\monomer{A}\monomer{C} \monomer{C}\monomer{A}\monomer{B}\monomer{C}\monomer{A} $\cdots$

\end{itemize}
The finding of the latter repetend, along with some other long repetends, is unexpected. Note that the ternary O\textendash K free energy was proposed to model the simplest triblock copolymers. Thus, it remains unclear whether it can also describe other multiblock terpolymers, nor do we know how the block sequence is related to $[\gamma_{ij}]$. However, it seems plausible that for star and cyclic architectures (which are symmetric, see \cite[Figure 1]{feng2017block}), $[\gamma_{ij}]$ should be of a symmetric form like \eqref{Ren's matrix}. For linear \monomer{A}\monomer{B}\monomer{C} and \monomer{A}\monomer{C}\monomer{B}\monomer{A}\monomer{C} architectures, $[\gamma_{ij}]$ should depend on $b$ in a different way from $a$ and $c$, like \eqref{Ohta's matrix}. It will be interesting to find out the relation between $[\gamma_{ij}]$ and the molecular architecture.

There are surely many other interesting directions to explore. For example, in this work, only $(-\Delta)^{-1}$ is considered as the kernel of the long range term. One may also consider other positive definite kernels like $(-\Delta)^{-s}$ \cite[Appendix]{chan2019lamellar}, $(-\mathcal{L}_\delta)^{-1}$ for some nonlocal diffusion operator $\mathcal{L}_\delta$  \cite{luo2022nonlocal,du2012sirev,du2019nonlocal}, and the screened Coulomb kernel $(\kappa^2I-\Delta)^{-1}$ \cite{muratov2010droplet}. It is also interesting to study quaternary systems \cite[Equation (4.1)]{wang2018analysis}, and our auxiliary results in Sections \ref{physical analogy} and \ref{conditions on [gamma_{ij}]} can be readily generalized to a system of arbitrarily many phases. The diffuse interface version \eqref{definition of J_epsilon} is of interest as well, but when $\epsilon\ll1$, the results should be parallel to the sharp interface limit \eqref{definition of J} via $\Gamma$-convergence \cite[Section 3]{ren2003triblock2}. As a final remark, our discussions here can be viewed as attempts towards a broad topic: competitions between short and long range interactions in multicomponent systems \cite{burchard2015nonlocal,muratov2019nonlocal,cicalese2016ground,giuliani2009pattern,mossa2004ground,liu2008self}.

\section*{Acknowledgments}
The authors would like to thank Professors Chong Wang, Xiaofeng Ren, An-Chang Shi, Juncheng Wei and Yanxiang Zhao for helpful discussions.

\appendix

\section{Underlying mechanism of interactions between generalized charges}
\label{underlying mechanism}

The decomposition \eqref{decomposition of [f_ij]} of $[f_{ij}]$ suggests a possible way to interpret the interactions between the generalized charges:
\begin{itemize}
\item Each charge consists of two sub-charges, 
as shown in Figure \ref{charges consist of quarks}. For example, a charge of type 1 consists of a sub-charge of type \quark{$12$} and a sub-charge of type \quark{$13$}.
\item For $i\neq j$, the potential energy due to the interaction between the pair of sub-charges \quark{$ij$} at $\vec x$ and \quark{$ji$} at $\vec y$ is $f_{ij}\,G(\vec x,\vec y)$, and that between \quark{$ij$} at $\vec x$ and \quark{$ij$} at $\vec y$ is $-f_{ij}\,G(\vec x,\vec y)$. There is no interaction within other pairs.
\end{itemize}

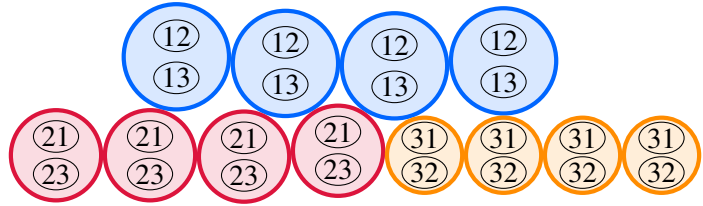
\begin{figure}[H]
\centering
\begin{tikzpicture}
\filldraw[color=blue A, fill=blue A!15, ultra thick](0+7.3,1.31) circle (0.7);
\draw (7.3+0,0.28+1.31) ellipse (0.3 and 0.21) node {12};
\draw (7.3+0,-0.28+1.31) ellipse (0.3 and 0.21) node {13};
\filldraw[color=blue A, fill=blue A!15, ultra thick](1.45+7.3,1.222) circle (0.7);
\draw (7.3+1.45,0.28+1.222) ellipse (0.3 and 0.21) node {12};
\draw (7.3+1.45,-0.28+1.222) ellipse (0.3 and 0.21) node {13};
\filldraw[color=blue A, fill=blue A!15, ultra thick](2.90+7.3,1.19) circle (0.7);
\draw (7.3+2.90,0.28+1.19) ellipse (0.3 and 0.21) node {12};
\draw (7.3+2.90,-0.28+1.19) ellipse (0.3 and 0.21) node {13};
\filldraw[color=blue A, fill=blue A!15, ultra thick](4.35+7.3,1.256) circle (0.7);
\draw (7.3+4.35,0.28+1.256) ellipse (0.3 and 0.21) node {12};
\draw (7.3+4.35,-0.28+1.256) ellipse (0.3 and 0.21) node {13};
\filldraw[color=red B, fill=red B!15, ultra thick](5.70,0) circle (0.6);
\draw (5.70,0.25) ellipse (0.3 and 0.21) node {21};
\draw (5.70,-0.25) ellipse (0.3 and 0.21) node {23};
\filldraw[color=red B, fill=red B!15, ultra thick](6.95,0) circle (0.6);
\draw (6.95,0.25) ellipse (0.3 and 0.21) node {21};
\draw (6.95,-0.25) ellipse (0.3 and 0.21) node {23};
\filldraw[color=red B, fill=red B!15, ultra thick](8.20,-0.018) circle (0.6);
\draw (8.20,0.25-0.018) ellipse (0.3 and 0.21) node {21};
\draw (8.20,-0.25-0.018) ellipse (0.3 and 0.21) node {23};
\filldraw[color=red B, fill=red B!15, ultra thick](9.45,0.06) circle (0.6);
\draw (9.45,0.25+0.06) ellipse (0.3 and 0.21) node {21};
\draw (9.45,-0.25+0.06) ellipse (0.3 and 0.21) node {23};
\filldraw[color=yellow C, fill=yellow C!15, ultra thick](10.60,0) circle (0.5);
\draw (10.60,0.23) ellipse (0.3 and 0.21) node {31};
\draw (10.60,-0.23) ellipse (0.3 and 0.21) node {32};
\filldraw[color=yellow C, fill=yellow C!15, ultra thick](11.65,0) circle (0.5);
\draw (11.65,0.23) ellipse (0.3 and 0.21) node {31};
\draw (11.65,-0.23) ellipse (0.3 and 0.21) node {32};
\filldraw[color=yellow C, fill=yellow C!15, ultra thick](12.70,0) circle (0.5);
\draw (12.70,0.23) ellipse (0.3 and 0.21) node {31};
\draw (12.70,-0.23) ellipse (0.3 and 0.21) node {32};
\filldraw[color=yellow C, fill=yellow C!15, ultra thick](13.75,0) circle (0.5);
\draw (13.75,0.23) ellipse (0.3 and 0.21) node {31};
\draw (13.75,-0.23) ellipse (0.3 and 0.21) node {32};
\end{tikzpicture}
\caption{Decomposition of charges into sub-charges. {\color{blue A}Blue}, {\color{red B}red} and {\color{yellow C}orange} balls represent charges of types 1, 2 and 3, respectively. Each charge consists of two (out of six) types of sub-charges.}
\label{charges consist of quarks}
\end{figure}

We further assume that there are van der Waals forces between charges, that the cohesive forces between charges of the same type are stronger than the adhesive forces between charges of different types, so that the charges behave like immiscible fluids in the thermodynamic limit at certain temperature, with the interfacial tensions being $\{c_{ij}\}$. (For a general account of the interfacial tension, see, e.g., \cite{birdi2015introduction}.) To ensure the overall charge neutrality, there must be the same number of charges of each type (Figure \ref{charges consist of quarks} shows four charges of each type). In accordance with the volume constraints, the volume ratio of charges of types 1, 2 and 3 is $\omega_1:\omega_2:\omega_3$. On the continuum level, such a discrete particle system can be described by the ternary O\textendash K free energy, and thus serves as an intuitive analogy. This analogy naturally raises a question on how to arrange balls in 1-D in order to minimize the potential energy between charges, a question further discussed in Section \ref{Repetend ABC} and Appendix \ref{Optimal arrangement of charged balls in 1-D}. The decomposition into simple interactions between sub-charges helps us answer this question for $f_{12},f_{13},f_{23}\leqslant0$ (see Proposition \ref{arrangement in the all nonpositive case}).

\section{Optimal arrangement of charged balls in 1-D}
\label{Optimal arrangement of charged balls in 1-D}

\subsection{Binary case}
\label{Binary case of long range Ising model in 1-D}
Given a positive integer $n$, consider a 1-D periodic cell $[0,1]$ packed with $n$ balls of type \monomer{A} and $n$ balls of type \monomer{B}, with unit amounts of positive and negative point charges at their centers, respectively. We assume that all the balls have the same radius $\frac1{4n}$, and that their centers are located at $\frac k{2n}$ for $k = 1,2,\cdots,2n$. Let $u:\big\{\frac k{2n}\big\}_{k=1}^{2n}\rightarrow\pm1$ represent the arrangement of the balls, with $1$ and $-1$ denoting \monomer{A} and \monomer{B}, respectively, then the total potential energy between charges can be written as
\begin{equation}
\label{electrostatic potential energy of A and B}
U(u)=\frac12\sum_{k=1}^{2n}\sum_{m=1}^{2n}u\Big(\frac k{2n}\Big)u\Big(\frac m{2n}\Big)G\Big(\frac k{2n},\frac m{2n}\Big),
\end{equation}
where $G$ is given by \eqref{expression of G on 1-D periodic cell}. By Proposition \ref{AB Ising model}, the alternating arrangement \monomer{A}\monomer{B} $\cdots\;$\monomer{A}\monomer{B} minimizes $U$, as shown in Figure \ref{binary alternating balls}. (Throughout Appendix \ref{Optimal arrangement of charged balls in 1-D}, "minimize" refers to "globally minimize".) This is a long range variant of the 1-D antiferromagnetic Ising model without external fields, subject to the zero overall spin constraint.
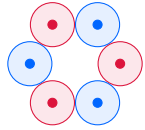
\begin{figure}[H]
\centering
\begin{tikzpicture}
\draw[color=blue A, fill=blue A!10, line width = 0.4] ([shift={(0,0)}]60:0.6) circle (0.293);
\filldraw[color=blue A] ([shift={(0,0)}]60:0.6) circle (0.06);
\draw[color=red B, fill=red B!10, line width = 0.4] ([shift={(0,0)}]0:0.6) circle (0.293);
\filldraw[color=red B] ([shift={(0,0)}]0:0.6) circle (0.06);
\draw[color=blue A, fill=blue A!10, line width = 0.4] ([shift={(0,0)}]-60:0.6) circle (0.293);
\filldraw[color=blue A] ([shift={(0,0)}]-60:0.6) circle (0.06);
\draw[color=red B, fill=red B!10, line width = 0.4] ([shift={(0,0)}]-120:0.6) circle (0.293);
\filldraw[color=red B] ([shift={(0,0)}]-120:0.6) circle (0.06);
\draw[color=blue A, fill=blue A!10, line width = 0.4] ([shift={(0,0)}]-180:0.6) circle (0.293);
\filldraw[color=blue A] ([shift={(0,0)}]-180:0.6) circle (0.06);
\draw[color=red B, fill=red B!10, line width = 0.4] ([shift={(0,0)}]-240:0.6) circle (0.293);
\filldraw[color=red B] ([shift={(0,0)}]-240:0.6) circle (0.06);
\end{tikzpicture}
\caption{An optimal arrangement for $n=3$. {\color{blue A}Blue} and {\color{red B}red} represent \monomer{A} and \monomer{B} balls with positive and negative point charges at their centers, respectively.}
\label{binary alternating balls}
\end{figure}

\begin{proposition}
\label{AB Ising model}
The minimizer of \eqref{electrostatic potential energy of A and B} is (up to translation) $u\big(\frac k{2n}\big)=(-1)^{k-1}$ for $k = 1,2,\cdots,2n$.
\end{proposition}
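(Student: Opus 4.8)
The plan is to diagonalise the quadratic form in \eqref{electrostatic potential energy of A and B} using its circulant structure. Writing $u_k=u(k/2n)$ and $C_{km}=G(k/2n,m/2n)$, one has $U(u)=\tfrac12\,u^{\rm T}Cu$; since the explicit kernel in \eqref{expression of G on 1-D periodic cell} depends only on $\{(k-m)/2n\}$, the matrix $C$ is a real symmetric circulant on $\mathbb Z/2n\mathbb Z$, hence diagonalised by the discrete Fourier modes $v^{(\ell)}_k=e^{2\pi i\ell k/2n}$, $\ell=0,\dots,2n-1$. It then suffices to (i) identify the eigenvalues $\mu_\ell$, (ii) show $\mu_n$ is strictly the smallest eigenvalue among the modes orthogonal to the constant vector $\vec 1$ and is simple, and (iii) conclude by a Rayleigh-quotient bound on the subspace $\vec 1^{\perp}$, where the admissible arrangements live because there are equally many \monomer{A} and \monomer{B} balls.

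For step (i) the cleanest route is to relate $C$ to a discrete Green's function. On $[0,1)$ the kernel in \eqref{expression of G on 1-D periodic cell} is, as a function of $x$ with second argument $0$, a single quadratic (the piecewise-quadratic structure already exploited in the proof of Proposition \ref{connection between OK and Ising}); computing its discrete second difference on the lattice $\{k/2n\}$ therefore gives $-\Delta_d\big((2n)^2 G(\cdot,0)\big)_k=2n\,\delta_{k,0}-1$, where $\Delta_d$ is the graph Laplacian of the cycle $\mathbb Z/2n\mathbb Z$. Convolving, $(-\Delta_d)\,C=\tfrac1{2n}I-\tfrac1{(2n)^2}\vec 1\,\vec 1^{\rm T}$, so that $C=\tfrac1{2n}(-\Delta_d)^{-1}$ on $\vec 1^{\perp}$. (Equivalently one may insert the Fourier series of the second Bernoulli polynomial into \eqref{expression of G on 1-D periodic cell}, split the resulting sum over residue classes $\bmod\,2n$, and evaluate using $\sum_{j\in\mathbb Z}(j+x)^{-2}=\pi^2/\sin^2\pi x$; the two routes agree.) Since the eigenvalues of $-\Delta_d$ on the cycle are $4\sin^2(\pi\ell/2n)$ with eigenvectors $v^{(\ell)}$, this yields $\mu_\ell=\dfrac{1}{8n\sin^2(\pi\ell/2n)}$ for $\ell=1,\dots,2n-1$.

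For steps (ii) and (iii): on $\{1,\dots,2n-1\}$ the map $\ell\mapsto\sin^2(\pi\ell/2n)$ is maximised only at $\ell=n$, so $\mu_n=\tfrac1{8n}$ is strictly the smallest of the $\mu_\ell$ with $\ell\neq 0$; in particular it is a simple eigenvalue of $C$, with eigenspace $\operatorname{span}_{\mathbb R}\{((-1)^k)_k\}$, whose only $\pm1$-valued vectors are $\pm((-1)^k)_k$. Any admissible arrangement $u$ satisfies $\sum_k u_k=0$, i.e. $u\in\vec 1^{\perp}$, so expanding $u$ in the Fourier modes gives $U(u)=\tfrac12\,u^{\rm T}Cu\ge\tfrac12\mu_n\|u\|^2=\tfrac18$, with equality exactly when $u$ lies in the $\mu_n$-eigenspace, i.e. $u_k=\pm(-1)^k$. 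These two choices are translates of one another by one ball position, so the minimiser is $u(k/2n)=(-1)^{k-1}$, unique up to translation, as claimed.

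The only genuinely delicate point is the \emph{simplicity} of $\mu_n$ — that the alternating arrangement is the \emph{only} minimiser, not merely one realising the minimum value; this is immediate from the closed form $\mu_\ell=1/(8n\sin^2(\pi\ell/2n))$, which shows the only coincidences among the $\mu_\ell$ are the conjugate pairs $\mu_\ell=\mu_{2n-\ell}$ while $\mu_n$ is unpaired. Everything else — the identification of $C$ with a scaled discrete Green's function (or the Bernoulli/cotangent summation) and the monotonicity of $\sin^2$ on $[0,\pi/2]$ — is routine.
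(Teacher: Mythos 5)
Your proof is correct, but it takes a genuinely different route from the paper's. You exploit the circulant structure of $C_{km}=G\big(\frac k{2n},\frac m{2n}\big)$: your identification $(-\Delta_d)C=\frac1{2n}I-\frac1{4n^2}\vec 1\vec 1^{\rm T}$ checks out (the off-diagonal second difference of the quadratic kernel is $\frac1{4n^2}$ and the diagonal one is $\frac1{4n^2}-\frac1{2n}$), so on $\vec 1^{\perp}$ the eigenvalues are indeed $\mu_\ell=\big(8n\sin^2(\pi\ell/2n)\big)^{-1}$, the mode $\ell=n$ is the unique strict minimizer among $\ell\neq0$ with one-dimensional real eigenspace ${\rm span}\{((-1)^k)_k\}$, and the Rayleigh-quotient argument on $\vec 1^{\perp}$ (where all admissible $u$ live, with $\|u\|^2=2n$) delivers both the bound $U\geqslant\tfrac18$ and uniqueness up to translation. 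The paper instead argues locally, in the spirit of Ren--Wei: it takes a maximal run of $l$ consecutive \monomer{B}'s in an optimal arrangement, tests optimality by swapping the boundary \monomer{A} and \monomer{B}, and uses the fact that the discrete field $E=V'$ is piecewise constant and drops by $u\big(\frac k{2n}\big)$ across each site to force $l=1$. Your spectral route is more quantitative (it produces the exact minimal energy and makes uniqueness immediate from simplicity of $\mu_n$), but it depends essentially on the charges sitting on a uniform lattice so that $C$ is circulant; the paper's exchange argument survives when the two species have different radii and the charge positions are no longer equispaced, which is exactly the generalization stated in the remark following the proposition, and the same technique is reused for the ternary degenerate case in Proposition \ref{arrangement in the degenerate case}.
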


\begin{proof} \emph{(Inspired by {\normalfont\cite[Proof of Proposition 3.1]{ren2000multiplicity}})}
Within an optimal arrangement, let us prove that $l$ must be $1$ for any segment like the following
{\large\begin{equation*}
\cdots \overset{\frac1{2n}}{\text{\monomer{A}}}\underbrace{\overset{\frac2{2n}}{\text{\monomer{B}}} \cdots \overset{\hspace{-5pt}\frac{l+1}{2n}\hspace{-3pt}}{\text{\monomer{B}}}}_{\hspace{-20pt}l\;\text{consecutive \monomer{B}}\hspace{-20pt}}\overset{\hspace{-3pt}\frac{l+2}{2n}\hspace{-5pt}}{\text{\monomer{A}}} \cdots{\normalsize\text{,\quad where $l=1,2,\cdots,n.$}}
\end{equation*}}
\noindent By translational invariance, we can assume that the above segment occupies the first $l\!+\!2$ sites $\big\{\frac k{2n}\big\}_{k=1}^{l+2}$. By assumption, $U$ should not decrease if we swap the \monomer{A} and \monomer{B} at the first two sites. Let $u^*$ represent this optimal arrangement, then we have
\begin{equation}
\label{test: exchange A and B}
\sum_{m=3}^{2n}u^*\Big(\frac m{2n}\Big)G\Big(\frac 1{2n},\frac m{2n}\Big)\leqslant\sum_{m=3}^{2n}u^*\Big(\frac m{2n}\Big)G\Big(\frac 2{2n},\frac m{2n}\Big).
\end{equation}
Define the electrostatic potential
\begin{equation*}
V(x;u)=\sum_{m=1}^{2n}u\Big(\frac m{2n}\Big)G\Big(x,\frac m{2n}\Big),\quad x\in[0,1].
\end{equation*}
Then from \eqref{test: exchange A and B} we know
\begin{equation*}
\begin{aligned}
V\Big(\frac1{2n};u^*\Big)\!-\!V\Big(\frac2{2n};u^*\Big)&\leqslant G\Big(\frac 1{2n},\frac 1{2n}\Big)\!-\!2G\Big(\frac 1{2n},\frac 2{2n}\Big)\!+\!G\Big(\frac 2{2n},\frac 2{2n}\Big)\\
&=\frac1{2n}-\frac1{4n^2}.
\end{aligned}
\end{equation*}
From \eqref{expression of G on 1-D periodic cell} and $\sum_{m=1}^{2n}u\big(\frac m{2n}\big)=0$ we can see that the potential $V$ is piecewise quadratic in $x$ with the coefficient of $x^2$ being 0, and thus is linear on every subinterval $[\frac{k-1}{2n},\frac k{2n}]$ for $k=1,\cdots,2n$. Define the electrostatic field $E(x;u)=\dd V(x;u)/\dd x$, then $E$ is piecewise constant in $x$ and
\begin{equation*}
E(x;u^*)\big|_{x\in\big(\frac1{2n},\frac2{2n}\big)}\geqslant\frac1{2n}\!-\!1>-1.
\end{equation*}
Analogously we have
\begin{equation*}
E(x;u^*)\big|_{x\in\big(\frac{l+1}{2n},\frac{l+2}{2n}\big)}<1.
\end{equation*}
We also know for $k=1,\cdots,2n\!-\!1$,
\begin{equation*}
E(x;u)\big|_{x\in\big(\frac{k-1}{2n},\frac k{2n}\big)}-E(x;u)\big|_{x\in\big(\frac k{2n},\frac{k+1}{2n}\big)}=u\Big(\frac k{2n}\Big).
\end{equation*}
Consequently, we have $-\,l>(-1)-1$, that is, $l=1$.
\end{proof}

\begin{remark}
The above proof can be generalized to the case where different types of balls have different sizes, that is, the positions of the point charges are no longer uniform (i.e., $x_k=\frac k{2n}$ for $k=1,2,\cdots,2n$). Instead, we have
\begin{equation*}
x_k-x_{k-1}=\frac{1\!+\!u(x_k)\omega}{4n}+\frac{1\!+\!u(x_{k-1})\omega}{4n},\quad\text{for }k=1,2,\cdots,2n,
\end{equation*}
where $x_0=0$ is identified with $x_{2n}=1$ so that $u(x_0)=u(x_{2n})$, and the radii of \monomer{A} and \monomer{B} balls are $\frac{1+\omega}{4n}$ and $\frac{1-\omega}{4n}$, respectively, for some $\omega\in(-1,1)\textbackslash\{0\}$.
\end{remark}

\subsection{Ternary case}
\label{Ternary case of long range Ising model in 1-D}
Given a positive integer $n$, consider a 1-D periodic cell $[0,1]$ packed with $n$ balls of type \monomer{A}, $n$ balls of type \monomer{B}, and $n$ balls of type \monomer{C}, labelled by 1, 2 and 3, respectively. Balls of type $i$ are assumed to have the radius $\omega_i/(2n)$. We also assume $f_{ij}\,G(x,y)$ to be the potential energy between a ball of type $i$ centered at $x$ and a ball of type $j$ centered at $y$. The total potential energy $U$ is the sum of all the pairwise interactions:
\begin{equation}
\label{electrostatic potential energy of A, B and C}
U=\frac12\sum_{k=1}^{3n}\sum_{m=1}^{3n}f_{\,i_k\,j_m}G(x_k,y_m),
\end{equation}
where $G$ is given by \eqref{expression of G on 1-D periodic cell}. The $k$-th ball, which is of type $i_k$, is centered at $x_k$, and the $m$-th ball, which is of type $j_m$, is centered at $y_m$. Therefore we have the following relation
\begin{equation*}
x_k\!-\!x_{k-1}=\frac{\omega_{i_k}}{2n}\!+\!\frac{\omega_{i_{k-1}}}{2n},\;\;x_k=y_k,\;\;\text{and}\;\;i_k=j_k,
\end{equation*}
for $k=1,2,\cdots,3n$, with $x_0=y_0=0$ and $i_0=j_0=i_{3n}=j_{3n}$.

\begin{conjecture}
\label{ABC Ising model}
For any admissible interaction strength matrix $[f_{ij}]$ and any positive $\{\omega_i\}$ satisfying $\sum_i\omega_i=1$, the arrangement \monomer{A}\monomer{B}\monomer{C} $\cdots$ \monomer{A}\monomer{B}\monomer{C} (i.e., $i_k\equiv k\mod 3$) minimizes \eqref{electrostatic potential energy of A, B and C}, as illustrated on the right side of Figure \ref{ABCBAC and ABCABC and balls with point charges}.
\end{conjecture}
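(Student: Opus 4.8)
The plan is to reduce the problem, via the matrix decomposition in Proposition~\ref{sufficient and necessary condition for [f_{ij}] to be positive semi-definite}, to a small family of coupled binary arrangement problems, and then to push through an exchange argument in the spirit of Proposition~\ref{AB Ising model}. For a pair $i\ne j$ let $\sigma^{(ij)}$ be the configuration with charge $+1$ on each type-$i$ ball, $-1$ on each type-$j$ ball and $0$ on the remaining balls; since there are $n$ balls of each type, $\sigma^{(ij)}$ has zero total charge. Substituting \eqref{decomposition of [f_ij]} into \eqref{electrostatic potential energy of A, B and C} gives
\begin{equation*}
U=\sum_{1\le i<j\le 3}f_{ij}\,U_{ij},\qquad U_{ij}:=-\tfrac12\sum_{k,m}\sigma^{(ij)}(x_k)\,G(x_k,x_m)\,\sigma^{(ij)}(x_m)\le 0,
\end{equation*}
the sign coming from positive semi-definiteness of $G$ on zero-mean data. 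By the corollary following Proposition~\ref{sufficient and necessary condition for [f_{ij}] to be positive semi-definite}, at most one of $f_{12},f_{13},f_{23}$ is positive; after relabelling the monomer types we may assume it is $f_{13}$, so $f_{12},f_{23}\le 0\le f_{13}$ and \monomer{B} plays the ``middle'' role. If $f_{13}\le 0$ as well, the statement is already Proposition~\ref{arrangement in the all nonpositive case}: splitting each charge into two sub-charges as in Appendix~\ref{underlying mechanism} turns $U$ into a sum of three decoupled antiferromagnetic binary energies, and \monomer{A}\monomer{B}\monomer{C}$\cdots$\monomer{A}\monomer{B}\monomer{C} makes all three of them alternate, hence simultaneously minimal. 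So the only genuinely open case is $f_{12},f_{23}<0<f_{13}$ (the degenerate boundary $f_{12}f_{23}=0$ should be set aside; it corresponds to \eqref{Blend's matrix}-type matrices and has many tied minimizers).

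For that case I would run an exchange/electrostatic-field argument. The one analytic input is that, by \eqref{expression of G on 1-D periodic cell}, the potential $V(x)=\sum_m q_m G(x,x_m)$ of any zero-mean point-charge configuration on the torus is piecewise linear, so the field $E=V'$ is piecewise constant with a jump equal to $q_k$ at each center $x_k$ --- exactly as in the proof of Proposition~\ref{AB Ising model}. Take an optimal arrangement (one exists by finiteness) and argue in stages. (a) No two \monomer{B}-balls are cyclically adjacent: swap a \monomer{B}-ball with an adjacent non-\monomer{B}-ball, use optimality to bound the difference of the ``\monomer{A}$-$\monomer{B}$''$ potential (which is piecewise linear because each row of $[f_{ij}]$ sums to $0$), and collapse a \monomer{B}-run to length one just as a B-run is collapsed in Proposition~\ref{AB Ising model}, now invoking $f_{12},f_{23}<0$. (b) The $n$ \monomer{B}-balls then cut the cycle into $n$ gaps holding $2n$ balls of types \monomer{A} and \monomer{C}; a second exchange --- this time using $f_{13}>0$, so \monomer{A} and \monomer{C} repel --- shows that putting two \monomer{A}'s (or two \monomer{C}'s) into one gap while another gap is short cannot lower $U$, forcing every gap to contain exactly one \monomer{A} and one \monomer{C}. (c) A final exchange equalizes the \monomer{A}/\monomer{C} orientation across all gaps. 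The type-sequence is then $(\monomer{A}\monomer{B}\monomer{C})^n$ or $(\monomer{A}\monomer{C}\monomer{B})^n$; these are reflections of one another, $G(x,y)=G(1-x,1-y)$, and reflection preserves ball radii, so both realize the same, minimal, value of $U$.

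The real obstacle --- and the reason this remains a conjecture --- is that the balls have unequal radii $\omega_i/(2n)$: swapping a type-$i$ and a type-$j$ ball shifts their centers by $\pm(\omega_i-\omega_j)/(2n)$, so the clean ``field jumps by $q_k$'' bookkeeping of Proposition~\ref{AB Ising model} no longer applies verbatim and each exchange in (a)--(c) carries a geometric correction of size $O(1/n)$ that, for fixed $n$, must be controlled. I expect the route through this is: first prove the conjecture for the symmetric composition $\omega_1=\omega_2=\omega_3=\tfrac13$, where swaps are rigid rotations and the argument above is clean; then upgrade to arbitrary $\{\omega_i\}$ either by a perturbation/continuity argument in $\vec\omega$ (the period-$3$ structure is rigid, so the global minimizer cannot jump away from it as $\vec\omega$ varies), or by passing to the continuum --- replacing balls by intervals and using Proposition~\ref{connection between OK and Ising} to reinterpret $U$ as the long-range term of $J$ on uniform-width lamellae --- and invoking a rearrangement inequality for the decreasing kernel $G(|x-y|)$. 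What makes stage (b) delicate is the competition it must resolve: when $f_{13}$ is large the \monomer{A}$\leftrightarrow$\monomer{C} repulsion favors clumping, and one must use admissibility, $f_{12}+f_{13}+f_{23}\le-\sqrt{f_{12}^2+f_{13}^2+f_{23}^2}$ (hence $|f_{12}|+|f_{23}|\ge 2f_{13}$), to show the \monomer{A}$\leftrightarrow$\monomer{B} and \monomer{B}$\leftrightarrow$\monomer{C} attractions always dominate and keep \monomer{A}\monomer{B}\monomer{C}$\cdots$\monomer{A}\monomer{B}\monomer{C} optimal.
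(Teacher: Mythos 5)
This statement is a \emph{conjecture} in the paper: the authors offer no proof, only numerical verification for $n=2,\dots,8$ over a sample of admissible $[f_{ij}]$ and compositions, plus the two special cases in Propositions \ref{arrangement in the degenerate case} and \ref{arrangement in the all nonpositive case}. Your proposal is likewise not a proof, and to your credit you say so. The parts you do carry out are sound and match the paper's partial results: the identity $U=\sum_{i<j}f_{ij}U_{ij}$ with $U_{ij}\leqslant0$ follows correctly from \eqref{decomposition of [f_ij]} and the positive semi-definiteness of $G$ on zero-mean data; the reduction, via the corollary to Proposition \ref{sufficient and necessary condition for [f_{ij}] to be positive semi-definite}, to the single open case $f_{12},f_{23}<0<f_{13}$ is a genuinely useful observation (indeed admissibility forces $f_{12},f_{23}$ \emph{strictly} negative once $f_{13}>0$, so your worry about the boundary $f_{12}f_{23}=0$ is moot in that case); and the all-nonpositive case is exactly Proposition \ref{arrangement in the all nonpositive case}.

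The gaps are in everything after that. Steps (a)--(c) are programmatic, not arguments: (a) the collapse of a $B$-run requires the field-jump bookkeeping of Proposition \ref{AB Ising model}, which breaks because swapping balls of unequal radii $\omega_i/(2n)$ displaces centers by $O(1/n)$, and you do not control these corrections; (b) is the crux --- when $f_{13}>0$ the term $f_{13}U_{13}$ is minimized by \emph{clumping} $A$ and $C$, so the decomposition pulls in opposite directions, and the inequality $|f_{12}|+|f_{23}|\geqslant 2f_{13}$ is stated but never actually deployed in an exchange estimate; (c) is asserted without computation. The proposed upgrade from $\omega_1=\omega_2=\omega_3$ to general $\{\omega_i\}$ by ``perturbation/continuity in $\vec\omega$'' is not sound as stated: for fixed $n$ the minimizer is selected from a finite set of arrangements, so the minimizing pattern can in principle switch at interior values of $\vec\omega$ unless one proves a strict energy gap uniform in $\vec\omega$ --- which is essentially the original problem. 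In short, you have correctly isolated where the difficulty lives, but the conjecture remains open after your argument exactly where it was open before it.
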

We have numerically verified Conjecture \ref{ABC Ising model} from $n=2$ to $8$ for the Cartesian product of 100 choices of $[f_{ij}]$ and 72 choices of $\{\omega_i\}$ (up to permutations we can assume $\omega_1\leqslant\omega_2\leqslant\omega_3$), as shown in Figures \ref{f_ijChoices.png} and \ref{omega_iChoices.png}. We also prove some special cases of Conjecture \ref{ABC Ising model} in Propositions \ref{arrangement in the degenerate case} and \ref{arrangement in the all nonpositive case}.

\begin{figure}[htbp]
\centering
\includegraphics[width=280pt]{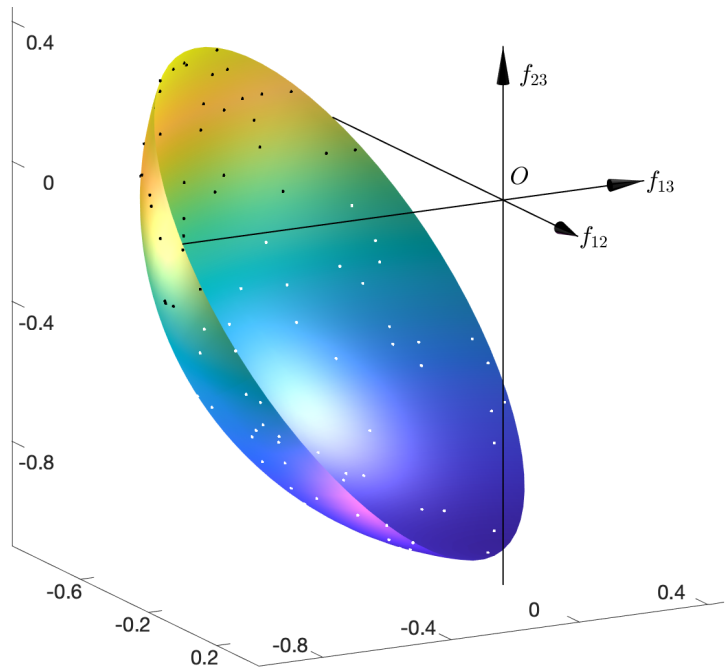}
\caption{Choices of $[f_{ij}]$ in the numerical verification. Colorful cap: the entire range of admissible $[f_{ij}]$ subject to $f_{12}^2+f_{13}^2+f_{23}^2=1$. Black and white dots: samples of $[f_{ij}]$ used in the numerical computation. Colors are only for visualization.}
\label{f_ijChoices.png}
\end{figure}

\begin{figure}[htbp]
\centering
\includegraphics[width=240pt]{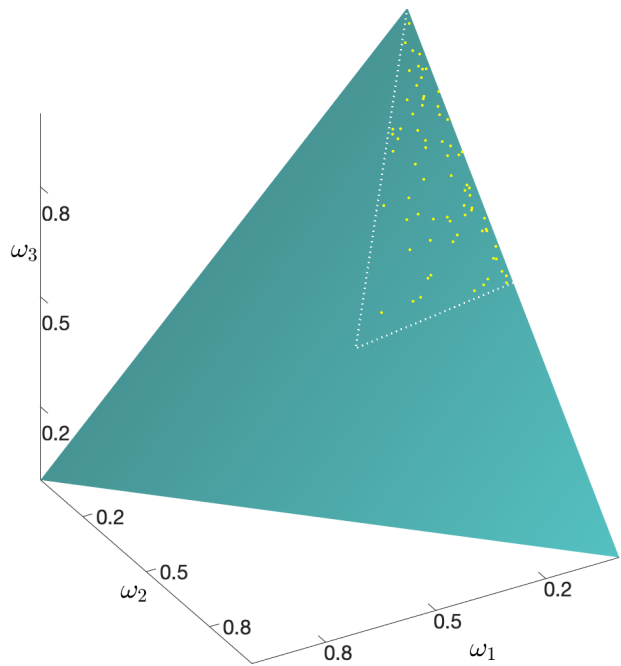}
\caption{Choices of $\{\omega_i\}$ in the numerical verification. {\color[RGB]{0,140,140}Teal} triangular plate: the entire range of $\{\omega_i\}$. {\color[RGB]{190,180,0}Yellow} dots: samples of $\{\omega_i\}$. Portion enclosed by dashed line segments: $\omega_1\leqslant\omega_2\leqslant\omega_3$.}
\label{omega_iChoices.png}
\end{figure}

\begin{proposition}
\label{arrangement in the degenerate case}
For $[f_{ij}]$ given by \eqref{Blend's matrix f_ij}, the minimizers of \eqref{electrostatic potential energy of A, B and C} are (up to translation and reflection) the following with any $l_k\geqslant0$ satisfying $\sum_{k=1}^nl_k=n$,
{\large\begin{equation*}
\text{\monomer{A}\monomer{B}}\underbrace{\text{\monomer{C}}\cdots\text{\monomer{C}}}_{\hspace{-20pt}l_1\;\text{consecutive \monomer{C}}\hspace{-20pt}}\text{\monomer{A}\monomer{B}}\underbrace{\text{\monomer{C}}\cdots\text{\monomer{C}}}_{l_2\hspace{47pt}\cdots\hspace{-55pt}}\text{\monomer{A}\monomer{B}}\cdots\text{\monomer{A}\monomer{B}}\underbrace{\text{\monomer{C}}\cdots\text{\monomer{C}}}_{l_n}{\normalsize\text{.}}
\end{equation*}}
\end{proposition}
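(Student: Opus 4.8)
\emph{Proof plan.} The first step is the electric-field reformulation underlying the proof of Proposition~\ref{AB Ising model}. Because the matrix \eqref{Blend's matrix f_ij} annihilates every pair of indices meeting $\{3\}$, the type-$C$ balls are invisible to \eqref{electrostatic potential energy of A, B and C}: $U$ is determined by the centers and types of the $A$- and $B$-balls alone. Put $\sigma_k=+1$ on the $A$-balls, $\sigma_k=-1$ on the $B$-balls, and $\rho=\sum_k\sigma_k\delta_{x_k}$ with the sum running over $A$- and $B$-balls only; then $\int_0^1\rho=0$, so with $\phi=G*\rho$ and $E=\phi'$ an integration by parts under periodic boundary conditions gives, up to the positive proportionality constant in \eqref{Blend's matrix f_ij},
\[
U=\tfrac12\int_0^1 E(x)^2\dd x .
\]
Here $E$ is piecewise constant, $\int_0^1E=0$, and $E$ jumps down by $1$ at each $A$-center, up by $1$ at each $B$-center, and not at all across a $C$-center; hence $E$ is constant on each of the $2n$ arcs cut out by consecutive $A/B$-centers, with neighbouring values differing by exactly~$1$ (equivalently $U=\tfrac12\operatorname{Var}(H)$ for the cyclic lattice walk $H$ that goes up by $1$ at each $A$-center and down by $1$ at each $B$-center). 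From here the proof splits in two: (I) in a minimizer the $A$- and $B$-balls alternate around the circle, i.e.\ the cyclic $A/B$-subword is $(AB)^n$; and (II) among such configurations $U$ is minimized exactly by the stated placements of the $C$-balls.

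For Part~(I) the plan is to extend the exchange argument of Proposition~\ref{AB Ising model}, now using two families of admissible perturbations of a purported minimizer: (a) transposing two \emph{adjacent}, oppositely typed $A/B$-balls, which leaves every other ball in place and merely reflects the pair about its fixed midpoint $m$, changing $U$ by $2\bigl(V_{\mathrm{ext}}(m-\ell)-V_{\mathrm{ext}}(m+\ell)\bigr)$, where $\ell=(\omega_1+\omega_2)/(4n)$ and $V_{\mathrm{ext}}$ is the potential of all the remaining $A/B$-charges; and (b) sliding one $C$-ball from a gap to a neighbouring gap, a rigid shift of an arc of balls by $\pm\omega_3/n$, which changes $U$ by a comparable explicit boundary term in $V_{\mathrm{ext}}$. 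Optimality forces all such increments to be nonnegative; feeding these inequalities into the jump relations for $E=\phi'$ together with $\int_0^1E=0$ — exactly as in the cited proof, where the field on a test interval is squeezed between $-1$ and $1$ — one rules out any maximal run of two or more consecutive same-type $A/B$-balls, forcing the $(AB)^n$ subword.

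For Part~(II), once the $A/B$-subword is $(AB)^n$ the field $E$ takes only two values, $e$ on every ``after-$B$'' arc and $e-1$ on every ``after-$A$'' arc. Writing $L_-$ for the total length of the after-$A$ arcs and $L_+=1-L_-$ for that of the after-$B$ arcs, the condition $\int_0^1E=0$ gives $e=L_-$, whence $U=\tfrac12\bigl(L_-^2L_++L_+^2L_-\bigr)=\tfrac12L_+L_-=\tfrac12L_-(1-L_-)$. Each after-$A$ arc has length $\tfrac{\omega_1+\omega_2}{2n}$ plus $\tfrac{\omega_3}{n}$ times the number of $C$-balls it contains, so $L_-=\tfrac{\omega_1+\omega_2}{2}+\tfrac{s\omega_3}{n}$ where $s\in\{0,1,\dots,n\}$ counts the $C$-balls lying in after-$A$ arcs; since $\tfrac{\omega_1+\omega_2}{2}<\tfrac12<\tfrac{\omega_1+\omega_2}{2}+\omega_3$, the concave map $L_-\mapsto L_-(1-L_-)$ is minimized over the attainable range precisely at the two endpoints $s=0$ and $s=n$, with common value $\tfrac{1-\omega_3^2}{4}$. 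Thus $U\geq\tfrac12\cdot\tfrac{1-\omega_3^2}{4}$ always, with equality iff $s\in\{0,n\}$. The case $s=0$ is exactly the family $AB\,C^{l_1}\,AB\,C^{l_2}\cdots AB\,C^{l_n}$ with arbitrary $l_k\geq0$ summing to $n$, and $s=n$ is its mirror image; combined with Part~(I) this identifies all minimizers up to translation and reflection.

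The main obstacle is Part~(I): a $C$-ball can sit \emph{between} an $A$- and a $B$-ball, so the clean midpoint-reflection transposition is available only for genuinely adjacent $A/B$-balls, and one must interleave it with the $C$-sliding moves (or argue separately that in a minimizer no $C$-ball separates two $A/B$-balls of a would-be alternating pair) before the field estimates of Proposition~\ref{AB Ising model} can be applied. Part~(II), by contrast, is just the short explicit computation above.
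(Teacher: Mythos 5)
Your plan is correct in outline and, after the alternation step, takes a genuinely different route from the paper's. Part (I) is exactly the paper's first step, at the same level of detail: the paper merely asserts that the $A/B$-subword of a minimizer must alternate ``similar to the proof of Proposition~\ref{AB Ising model}'', without addressing the complication you rightly flag (a $C$-ball interposed between an $A$ and a $B$), so you are at parity there and the real remaining work is the same for both arguments. The divergence is downstream. The paper proceeds by exclusion: segments such as $CAC$, $CABAC$, \dots\ are ruled out because the potential $V_2$ of the remaining charges is strictly concave on the relevant arc (those charges carry a net imbalance of one $B$ over one $A$), so swapping the offending ball with an adjacent $C$ strictly decreases $U$; it then verifies separately, by a dipole--dipole computation, that all configurations of the stated form have equal energy. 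Your Part (II) subsumes both steps in the single identity $U=\tfrac12 L_+L_-=\tfrac12 L_-(1-L_-)$ valid for every alternating configuration, combined with the fact that the attainable values $L_-=\tfrac{1-\omega_3}{2}+\tfrac{s\omega_3}{n}$, $s=0,\dots,n$, form a set symmetric about $\tfrac12$ on which the concave map $L\mapsto L(1-L)$ attains its minimum exactly at $s\in\{0,n\}$. That is cleaner and more informative: it identifies the minimizers, shows they are all degenerate in energy, and gives the explicit value $U_{\min}=\tfrac{1-\omega_3^2}{8}$ in one stroke, with no case analysis over forbidden segment types. (One small slip in Part (I): with unequal radii the adjacent transposition reflects the two centres about the arc midpoint $m$ at distances $r_B$ and $r_A$ respectively, so the energy increment is $2(r_A+r_B)$ times the constant external field on that arc rather than your symmetric difference of $V_{\mathrm{ext}}$ at $m\pm\ell$; since $V_{\mathrm{ext}}$ is linear there, this changes nothing essential.)
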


\begin{proof}
For \eqref{Blend's matrix f_ij}, balls of type \monomer{C} do not engage in the interaction between charges. Similar to Proof of Proposition \ref{AB Ising model}, we can prove that within any optimal arrangement, \monomer{A} and \monomer{B} must be alternating if \monomer{C} is ignored. Now let us exclude the following segment from optimal arrangements:
{\large\begin{equation*}
\cdots \overset{x_1}{\text{\monomer{C}}}\,\overset{x_2}{\text{\monomer{A}}}\,\overset{x_3}{\text{\monomer{C}}} \cdots{\normalsize\text{.}}
\end{equation*}}
\noindent By translational invariance, we can assume that the above segment occupies the first 3 sites $\{x_k\}_{k=1}^3$. Define the potential created by charges at all other sites as
\begin{equation*}
V_2(x)=\sum_{m=4}^{3n}f_{\,i_2\,j_m}G(x,y_m),\quad x\in[0,1],\quad\text{where}\;i_2=1.
\end{equation*}
Among the remaining balls centered at $\{y_m\}_{m=4}^{3n}$, there are one more \monomer{B} than \monomer{A}, so $V_2$ is piecewise quadratic in $x$ with the coefficient of $x^2$ being $-\frac12$, and thus is strictly concave on $[x_0,x_4]$. Therefore by swapping the \monomer{A} at $x_2$ and the \monomer{C} at either $x_1$ or $x_3$, we can decrease $U$ by the amount
\begin{equation*}
V_2(x_2)-\min\Big\{V_2\Big(x_2\!-\!\frac{\omega_3}n\Big),\;V_2\Big(x_2\!+\!\frac{\omega_3}n\Big)\Big\}.
\end{equation*}
Analogously, we can rule out other segments like \monomer{C}\monomer{A}\monomer{B}\monomer{A}\monomer{C}, \monomer{C}\monomer{A}\monomer{B}\monomer{A}\monomer{B}\monomer{A}\monomer{C}, etc. Lastly, let us verify that different choices of $\{l_k\}$ yield the same $U$. Consider two \monomer{A}\monomer{B} dipoles separated by \monomer{C}
{\large\begin{equation*}
\cdots \overset{\hspace{2pt}x_1\hspace{-2pt}}{\text{\monomer{A}}}\,\text{\monomer{B}}\underbrace{\text{\monomer{C}}\cdots\text{\monomer{C}}}_{\hspace{-20pt}l\;\text{consecutive \monomer{C}}\hspace{-20pt}}\overset{\hspace{-3pt}x_{l+3}\hspace{-7pt}}{\text{\monomer{A}}}\,\text{\monomer{B}} \cdots{\normalsize\text{,}}
\end{equation*}}
between which the potential energy is
\begin{equation*}
\begin{aligned}
&G(x_1,x_{l+3})-G(x_1,x_{l+4})-G(x_2,x_{l+3})+G(x_2,x_{l+4})\\
=&-(x_2-x_1)\,(x_{l+4}-x_{l+3})\\
=&-\Big(\frac{\omega_1}{2n}+\frac{\omega_2}{2n}\Big)^2,
\end{aligned}
\end{equation*}
and is independent of $l$.
\end{proof}

\begin{remark}
In Proposition \ref{arrangement in the degenerate case}, if we penalize \monomer{A}\monomer{C} and \monomer{B}\monomer{C} interfaces with equal weights, then the minimizer is unique, i.e., \monomer{A}\monomer{B} $\cdots\;$\monomer{A}\monomer{B}\monomer{C} $\cdots\;$\monomer{C}. This is reminiscent of the results in \cite[Theorems 4 and 5]{van2008copolymer}, where \monomer{C} forms only one macrodomain.
\end{remark}

\begin{proposition}
\label{arrangement in the all nonpositive case}
For $[f_{ij}]$ given by \eqref{decomposition of [f_ij]} with nonpositive $f_{12}$, $f_{13}$ and $f_{23}$, the arrangement \monomer{A}\monomer{B}\monomer{C} $\cdots$ \monomer{A}\monomer{B}\monomer{C} minimizes \eqref{electrostatic potential energy of A, B and C}.
\end{proposition}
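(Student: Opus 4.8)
The plan is to split $U$ into three one-dimensional binary Coulomb energies via the sub-charge decomposition of Appendix~\ref{underlying mechanism}, and then to show that the pattern \monomer{A}\monomer{B}\monomer{C} $\cdots$ \monomer{A}\monomer{B}\monomer{C} minimizes all three simultaneously. Write $f_{ij}=-g_{ij}$, so $g_{12},g_{13},g_{23}\geqslant0$. Assigning to each ball of type $i$ the sub-charges \quark{$ij$} for the two $j\neq i$ and using the interaction rules of Appendix~\ref{underlying mechanism}, one collects the pairwise interactions in \eqref{electrostatic potential energy of A, B and C} according to which unordered pair $\{i,j\}$ they involve and obtains
\begin{equation*}
U=\sum_{\{i,j\}}U_{\{i,j\}},\qquad U_{\{i,j\}}=\frac{g_{ij}}2\sum_{k=1}^{3n}\sum_{m=1}^{3n}s^{ij}_k\,s^{ij}_m\,G(x_k,x_m),
\end{equation*}
where $s^{ij}_k$ equals $+1$, $-1$, or $0$ according as the $k$-th ball has type $i$, type $j$, or the remaining type. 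Since each $g_{ij}\geqslant0$, every $U_{\{i,j\}}$ is a nonnegative multiple of a binary Coulomb energy as in Proposition~\ref{AB Ising model}, now with $n$ extra inert balls of the third type present. It therefore suffices to show that \monomer{A}\monomer{B}\monomer{C} $\cdots$ \monomer{A}\monomer{B}\monomer{C} minimizes each $U_{\{i,j\}}$ separately.

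Fix a pair $\{i,j\}$ with third index $k$, set $V^{ij}(x)=\sum_m s^{ij}_m\,G(x,x_m)$ and $E^{ij}=\dd V^{ij}/\dd x$. Because $\sum_m s^{ij}_m=0$, the potential $V^{ij}$ is piecewise linear and periodic, so $E^{ij}$ is a mean-zero step function that jumps by $\pm1$ at the centres of the type-$i$ and type-$j$ balls and is continuous across the type-$k$ balls; integrating by parts, $U_{\{i,j\}}=\tfrac{g_{ij}}2\int_0^1(E^{ij})^2\dd x=\tfrac{g_{ij}}2\operatorname{Var}\!\big(E^{ij}\big)$. The crux is the estimate
\begin{equation*}
\operatorname{Var}\!\big(E^{ij}\big)\ \geqslant\ \tfrac14\bigl(1-\omega_k^2\bigr)\qquad\text{for every admissible arrangement.}
\end{equation*}
To prove it I would first show, by the electric-field technique in the proof of Proposition~\ref{AB Ising model} adapted to non-uniform ball widths and inert balls as in the proof of Proposition~\ref{arrangement in the degenerate case}, that in any minimizer of $U_{\{i,j\}}$ the type-$i$ and type-$j$ balls alternate cyclically: a non-alternating arrangement contains a maximal run of like-type charged balls, and exchanging a charged ball at the boundary of such a run with a neighbour, together with the identity $G(x,x)-2G(x,y)+G(y,y)=d(1-d)$ for centres at circular distance $d$ (used already in the proof of Proposition~\ref{AB Ising model}), forces the field just outside the run into $(-1,1)$, contradicting maximality. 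Once the type-$i$ and type-$j$ balls alternate, $E^{ij}$ takes only two consecutive values, so $\operatorname{Var}(E^{ij})=\mu(1-\mu)$ where $\mu$ is the total length of the arcs carrying the larger value; each such arc runs from a type-$i$ centre to the next type-$j$ centre and hence contains the two abutting half-balls, so $\mu\geqslant\tfrac12(\omega_i+\omega_j)=\tfrac12(1-\omega_k)$, and symmetrically $\mu\leqslant\tfrac12(1+\omega_k)$; since $\mu(1-\mu)=\tfrac14-(\mu-\tfrac12)^2$ is minimized on this symmetric interval at its endpoints, the estimate follows.

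Finally, in the pattern \monomer{A}\monomer{B}\monomer{C} $\cdots$ \monomer{A}\monomer{B}\monomer{C} each subsystem's field $E^{ij}$ is two-valued, and its larger-value arc runs from every type-$i$ centre through the adjacent type-$j$ ball, crossing one type-$k$ ball exactly when $k$ sits between $i$ and $j$ in the cyclic word, so $\mu=\tfrac12(1-\omega_k)$ or $\tfrac12(1+\omega_k)$ and $\operatorname{Var}(E^{ij})=\tfrac14(1-\omega_k^2)$; thus this arrangement attains the lower bound in all three subsystems at once, and summing $U=\sum_{\{i,j\}}\tfrac{g_{ij}}2\operatorname{Var}(E^{ij})$ shows it minimizes \eqref{electrostatic potential energy of A, B and C}. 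The main obstacle is the alternation step: carrying out the electric-field/exchange argument of Proposition~\ref{AB Ising model} in this setting takes care, because exchanging a charged ball with a neighbouring inert ball of a different width displaces the charge and reshapes the field, so one must verify that a suitable sequence of such exchanges drives any arrangement toward an alternating one without ever increasing $\operatorname{Var}(E^{ij})$; everything after that is elementary.
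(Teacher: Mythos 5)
Your decomposition of $U$ into the three binary sub-energies $U_{\{i,j\}}$ with nonnegative weights $g_{ij}=-f_{ij}$ is exactly the paper's key idea: the paper decomposes $[f_{ij}]$ via \eqref{decomposition of [f_ij]} into three permuted copies of \eqref{Blend's matrix f_ij} and then observes that, by Proposition \ref{arrangement in the degenerate case}, the cyclic arrangement minimizes all three copies simultaneously, since \monomer{A}\monomer{B}\monomer{C} $\cdots$ \monomer{A}\monomer{B}\monomer{C} is the member $l_k\equiv1$ of each of the three permuted families of minimizers listed there. Where you diverge is in how each binary subsystem is treated. The paper simply cites Proposition \ref{arrangement in the degenerate case}; you re-derive the needed bound through the identity $U_{\{i,j\}}=\tfrac{g_{ij}}{2}\operatorname{Var}(E^{ij})=\tfrac{g_{ij}}{2}\,\mu(1-\mu)$ for alternating arrangements together with the observation that $\mu$ ranges over an interval symmetric about $1/2$. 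This buys you an explicit minimum value $\tfrac{g_{ij}}{8}(1-\omega_k^2)$ and identifies the extremal configurations as exactly those in which the type-$i$ and type-$j$ balls pair up adjacently, so it recovers the content of Proposition \ref{arrangement in the degenerate case} by a cleaner mechanism than the paper's concavity argument for excluding segments like \monomer{C}\monomer{A}\monomer{C}. The cost is that your argument still hinges on the alternation claim, which you rightly flag as the main obstacle and only sketch; but this is precisely the step that Proposition \ref{arrangement in the degenerate case} already supplies (its proof likewise defers alternation to the exchange technique of Proposition \ref{AB Ising model}), so the shortest way to close your proof is to invoke that proposition directly and drop the variance machinery --- which is what the paper does. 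As written, your proposal is sound in structure and no step is wrong; it is simply longer than necessary and leaves its hardest ingredient at the same level of detail as the paper leaves the corresponding ingredient of Proposition \ref{arrangement in the degenerate case}.
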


\begin{proof}
By \eqref{decomposition of [f_ij]}, $[f_{ij}]$ can be decomposed into three components, which are permutations of \eqref{Blend's matrix f_ij} and represent the interactions between sub-charges shown in Figure \ref{charges consist of quarks}. With the assumption $f_{12},f_{13},f_{23}\leqslant0$, all three components are simultaneously minimized by the cyclic arrangement \monomer{A}\monomer{B}\monomer{C} $\cdots$ \monomer{A}\monomer{B}\monomer{C}, according to Proposition \ref{arrangement in the degenerate case}.
\end{proof}

\section{Numerical computation of free energy in a 1-D periodic cell}
\label{Calculation of the free energy J in 1-D}

We now offer some details of the numerical computation to seek for the 1-D global minimizers of the free energy \eqref{definition of J}. For each pattern, we obtain the optimal layer widths numerically, with the initial guess (for optimization) having uniform layer widths. We use {\fontfamily{pcr}\selectfont fmincon}, a constrained local minimization function in MATLAB, with the constraints being the volume constraints and nonnegativity of layer widths. For the input argument {\fontfamily{pcr}\selectfont options}, we set {\fontfamily{pcr}\selectfont OptimalityTolerance}, {\fontfamily{pcr}\selectfont ConstraintTolerance} and {\fontfamily{pcr}\selectfont StepTolerance} to be $10^{-6}$, which should be sufficient for our purposes. Since patterns are defined modulo translation \cite[Definition 4.1]{ren2003triblock2}, we can avoid some redundant computation. For further acceleration, we use MATLAB's parallel tool {\fontfamily{pcr}\selectfont parfor} to work on multiple (e.g., 24) patterns simultaneously. 

We adopt a simple algorithm based on \eqref{general framework} to compute the long range term of \eqref{definition of J}. Noticing that the Green's function on $[0,1]$ with periodic boundary conditions is given by \cite[Equation (4.19)]{ren2003triblock2}
\begin{equation}
\label{expression of G on 1-D periodic cell}
G(x,y)=\frac{|x\!-\!y|^2}2-\frac{|x\!-\!y|}2+\frac1{12},\quad\text{for}\;x,y\in[0,1],
\end{equation}
we have $\int_{y_1}^{y_2}\int_{x_1}^{x_2}G(x,y)\dd{x}\dd{y}=F(x_2\!-\!y_1)-F(x_1\!-\!y_1)-F(x_2\!-\!y_2)+F(x_1\!-\!y_2)$, where
\begin{equation*}
F(x)=\frac{\big(1\!-\!|x|\big)^2x^2}{24}.
\end{equation*}
Now, given a pattern and the positions of interfaces, the following algorithm returns the free energy $J$.
\begin{lstlisting}[caption = {MATLAB code for computing $J$ in a 1-D periodic cell},mathescape]
function J = FreeEnergy(p, y, cij, gamij)
% p represents the pattern. For example, p = [1;2;3] for the shortest pattern $\text{\color[RGB]{34,139,33}\monomer{A}\monomer{B}\monomer{C}}$.
% y consists of the positions of interfaces. For example, y = [0;1/3;2/3;1].
% cij = [${\color[RGB]{34,139,33}c_{ij}}$] with ${\color[RGB]{34,139,33}c_{ii}}$ being 0.
% gamij = [${\color[RGB]{34,139,33}\gamma_{ij}}$].
ShortRangeTerm = sum( cij( sub2ind( size(cij), p, circshift(p,1) ) ) );
Fx24 = @(x) (1-abs(x)).^2.*x.^2; % An auxiliary function 24 times F.
LongRangeTerm = -sum(gamij(p,p).*diff(diff(Fx24(y-reshape(y,1,[])),1,1),1,2),'all')/24;
J = ShortRangeTerm + LongRangeTerm;
end
\end{lstlisting}
Although this algorithm is of complexity $O\big(${\fontfamily{pcr}\selectfont length(p)}\!\textasciicircum$2\big)$, it is not a bottleneck compared to the exhaustive search (among all the patterns) of complexity $O\big(2$\textasciicircum{\fontfamily{pcr}\selectfont length(p)}$\big)$, since the total complexity is the product of the above two and that of {\fontfamily{pcr}\selectfont fmincon} (for constrained optimization in {\fontfamily{pcr}\selectfont y}).

\section{Analytic calculation of free energy of 1-D periodic patterns}
\label{Calculating the free energy of periodic patterns}

For $\Omega=[0,1]$ with periodic boundary conditions, the long range term of \eqref{definition of J_epsilon} can be rewritten as
\begin{equation*}
\sum_{i=1}^3\sum_{j=1}^3\gamma_{ij}\int_0^1\big(u_i(x)\!-\!\omega_i\big)v_j(x)\dd{x}=\sum_{i=1}^3\sum_{j=1}^3\gamma_{ij}\int_0^1w_i(x)\,w_j(x)\dd{x},
\end{equation*}
where $v_j(x)=\int_0^1G(x,y)\,\big(u_j(y)\hspace{-0.5pt}-\hspace{-0.5pt}\omega_j\big)\dd{y}$ (so that $-v_j''=u_j\hspace{-0.5pt}-\hspace{-0.5pt}\omega_j$) and $w_j=-v_j'\,$. Denoting $\vec w=[w_1,w_2,w_3]^{\rm T}$, we can rewrite the above right-hand side as
\begin{equation*}
\int_0^1\vec w(x)^{\rm T}\,[\gamma_{ij}]\,\vec w(x)\dd{x}.
\end{equation*}
In \cite[Section 4]{ren2003triblock2}, Ren and Wei studied a local minimizer which is \monomer{A}\monomer{B}\monomer{C} identically repeating for $n$ times. In that case, $\vec w$ is periodic with period $1/n$, so one only needs to solve the following equation in one period in order to obtain the free energy
\begin{equation*}
\frac{\dd\vec w}{\dd x}=
\left\{
\begin{aligned}
&\vec e_1-\vec\omega,&&\;0<x<\frac{\omega_1}n,\\
&\vec e_2-\vec\omega,&&\;\frac{\omega_1}n<x<\frac{\omega_1\!+\!\omega_2}n,\\
&\vec e_3-\vec\omega,&&\;\frac{\omega_1\!+\!\omega_2}n<x<\frac 1n,
\end{aligned}
\right.
\quad\text{with}\;\int_0^1\vec w(x)\dd{x}=\vec0,
\end{equation*}
where $\vec\omega$ denotes $[\omega_1,\omega_2,\omega_3]^{\rm T}$, and $\{\vec e_1,\vec e_2,\vec e_3\}$ forms the standard basis. In this way, we can obtain \eqref{free energy of ABC}.

Analogously, for \monomer{A}\monomer{B}\monomer{A}\monomer{C} identically repeating for $n$ times (with all the \monomer{A} layers having the same width), we can obtain \eqref{free energy of ABAC} by solving the following equation
\begin{equation*}
\frac{\dd\vec w}{\dd x}=
\left\{
\begin{aligned}
&\vec e_1-\vec\omega,&&\;0<x<\frac{\omega_1}{2n},\\
&\vec e_2-\vec\omega,&&\;\frac{\omega_1}{2n}<x<\frac{\omega_1}{2n}\!+\!\frac{\omega_2}{n},\\
&\vec e_1-\vec\omega,&&\;\frac{\omega_1}{2n}\!+\!\frac{\omega_2}{n}<x<\frac{\omega_1\!+\!\omega_2}{n},\\
&\vec e_3-\vec\omega,&&\;\frac{\omega_1\!+\!\omega_2}n<x<\frac 1n,
\end{aligned}
\right.
\quad\text{with}\;\int_0^1\vec w(x)\dd{x}=\vec0.
\end{equation*}

\section{Alternative derivation of the admissibility conditions}
\label{Alternative derivation}

As mentioned in Remark \ref{remark on the incompressibility condition}-(i), there is an alternative derivation of the conditions in Theorem \ref{equivalent conditions for facilitation of charge neutrality} from the following three requirements:
\begin{itemize}
\item for $\vec u=\vec\omega$, the long range term \eqref{long range term} attains zero;
\item for any $\vec u$ satisfying the incompressibility condition $\vec u^{\rm T}\vec 1=1$, the long range term \eqref{long range term} is nonnegative;
\item $[\gamma_{ij}]$ is symmetric.
\end{itemize}
Under Neumann or periodic boundary conditions, we have $\int_{\Omega}G(\vec x,\vec y)\dd{\vec x}=0$ for any $\vec y\in\Omega$, so the first requirement is automatically satisfied and therefore does not lead to the condition $\vec\omega^{\rm T}[\gamma_{ij}]\vec\omega=0$. Moreover, the second requirement does not lead to the condition $[\gamma_{ij}]\succcurlyeq0$ because of the incompressibility condition. However, as explained in Proposition \ref{one choice satisfies the conditions}, there are many equivalent choices of $[\gamma_{ij}]$, and one of them satisfies $[\gamma_{ij}]\vec\omega=\vec0$ and $[\gamma_{ij}]\succcurlyeq0$ as desired.

\begin{proposition}
\label{one choice satisfies the conditions}
Under the incompressibility condition, among all the $[\gamma_{ij}]$ fulfilling the above three requirements and yielding the same long range term \eqref{long range term}, there is a unique one satisfying $[\gamma_{ij}]\,\vec\omega=\vec 0$. Meanwhile, it also satisfies $[\gamma_{ij}]\succcurlyeq0$.
\end{proposition}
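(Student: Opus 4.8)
The plan is to work on the hyperplane $H=\{\vec q\in\mathbb R^3:\vec q^{\rm T}\vec 1=0\}$ and to exploit the splitting $\mathbb R^3=\mathrm{span}(\vec\omega)\oplus H$, which is legitimate precisely because $\vec\omega^{\rm T}\vec 1=\sum_i\omega_i=1$, so $\vec\omega\notin H$. The first step is to record the elementary reduction: since $\int_\Omega G(\vec x,\vec y)\dd{\vec x}=0$ under periodic or Neumann boundary conditions, writing an incompressible $\vec u$ as $\vec\omega+\vec v$ with $\vec v(\vec x)\in H$ and $\int_\Omega\vec v(\vec x)\dd{\vec x}=\vec0$ makes the $\vec\omega$–$\vec\omega$ and $\vec\omega$–$\vec v$ contributions to \eqref{long range term} vanish, so the long range term collapses to $\int_\Omega\int_\Omega\vec v(\vec x)^{\rm T}[\gamma_{ij}]\vec v(\vec y)\,G(\vec x,\vec y)\dd{\vec x}\dd{\vec y}$. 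Because the scalar kernel $G$ is not annihilated by every mean-zero function (it is $(-\Delta)^{-1}$, which is positive on mean-zero functions), a one-line polarization argument then shows that, among symmetric matrices, the long range term is determined by—and determines—the symmetric bilinear form that $[\gamma_{ij}]$ induces on $H\times H$. I would also bring in the matrix $P=\begin{bmatrix}1-a&-a&-a\\-b&1-b&-b\end{bmatrix}$ from \eqref{General matrix} and note the three facts about it that I need: $P\vec\omega=\vec0$, $\ker P=\mathrm{span}(\vec\omega)$, and (since $\vec\omega\notin H$) $P$ restricts to a linear isomorphism $P|_H\colon H\to\mathbb R^2$; write $Q$ for its inverse.

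For existence together with the positive semi-definiteness, I would take any matrix $M_0$ from the family in question—symmetric, satisfying the three requirements, and producing the prescribed long range term—and simply set $[\tilde\gamma_{ij}]:=Q^{\rm T}M_0Q\in\mathbb R^{2\times2}$ and $M:=P^{\rm T}[\tilde\gamma_{ij}]P$. The verification is then routine: $M$ is symmetric; $M\vec\omega=P^{\rm T}[\tilde\gamma_{ij}]P\vec\omega=\vec0$ because $P\vec\omega=\vec0$; the second requirement is equivalent to $\vec w^{\rm T}M_0\vec w\geqslant0$ for all $\vec w\in H$ (test the long range term against $\vec u=\vec\omega+\vec w\,\psi$ for mean-zero scalars $\psi$), so $\vec\xi^{\rm T}[\tilde\gamma_{ij}]\vec\xi=(Q\vec\xi)^{\rm T}M_0(Q\vec\xi)\geqslant0$ and hence $M=P^{\rm T}[\tilde\gamma_{ij}]P\succcurlyeq0$; and by the choice of $Q$ the matrices $M$ and $M_0$ induce the same bilinear form on $H\times H$, so by the first step $M$ yields the same long range term as $M_0$ and therefore also fulfils the three requirements. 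Thus $M$ lies in the family, kills $\vec\omega$, and is positive semi-definite.

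For uniqueness I would take two members $M,M'$ of the family with $M\vec\omega=M'\vec\omega=\vec0$, put $N:=M-M'$, and show $N=0$. By construction $N$ is symmetric, $N\vec\omega=\vec0$, and—having the same long range term—$N$ induces the zero form on $H\times H$. The latter, together with symmetry, forces $N(H)\perp H$, i.e.\ $N(H)\subseteq\mathrm{span}(\vec1)$ (for an orthonormal basis $\{\vec h_1,\vec h_2\}$ of $H$ we get $\vec h_i^{\rm T}N\vec h_j=0$, so each $N\vec h_i$ is orthogonal to $H$). If $N$ were not identically zero on $H$, one could rescale to get $\vec v_0\in H$ with $N\vec v_0=\vec1$, and then $0=\vec v_0^{\rm T}N\vec\omega=(N\vec v_0)^{\rm T}\vec\omega=\vec1^{\rm T}\vec\omega=1$, a contradiction; hence $N|_H=0$, and with $N\vec\omega=\vec0$ and $\mathbb R^3=\mathrm{span}(\vec\omega)\oplus H$ this forces $N=0$, i.e.\ $M=M'$. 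The same bookkeeping shows, as a byproduct, that $[\tilde\gamma_{ij}]\mapsto P^{\rm T}[\tilde\gamma_{ij}]P$ is a bijection from symmetric $2\times2$ matrices onto the admissible $[\gamma_{ij}]$ (inverse $M\mapsto Q^{\rm T}MQ$) that respects positive (semi-)definiteness, which is the correspondence with the class \eqref{General matrix} promised in Remark \ref{one-to-one correspondence under the conditions}.

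I expect the only genuinely delicate point to be the equivalence ``same long range term $\iff$ same bilinear form on $H\times H$'', since that is the one place where a property of the kernel $G$—rather than pure linear algebra—is used; dually, one must be careful that the second requirement by itself only delivers non-negativity of $[\gamma_{ij}]$ on $H$, and that the upgrade to $[\gamma_{ij}]\succcurlyeq0$ on all of $\mathbb R^3$ is exactly what the identity $[\gamma_{ij}]\vec\omega=\vec0$ purchases, via $\vec\omega\notin H$. Everything else is straightforward bookkeeping with the splitting $\mathbb R^3=\mathrm{span}(\vec\omega)\oplus H$ and the maps $P$, $Q$.
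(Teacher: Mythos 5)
Your proof is correct, and its skeleton coincides with the paper's: reduce the long range term to the quadratic form that $[\gamma_{ij}]$ induces on the incompressibility hyperplane, and realize the canonical representative by conjugating the reduced $2\times2$ matrix with the matrix from \eqref{General matrix}. In fact your $Q=(P|_H)^{-1}$ is exactly the paper's $\mathcal A$ from \eqref{elimination by incompressibiblity} (one checks $P\mathcal A=I_2$ and that the columns of $\mathcal A$ span $H$), and your $P$ is the paper's $\mathcal B$, so your $[\tilde\gamma_{ij}]=Q^{\rm T}M_0Q$ is precisely $T(M_0)=\mathcal A^{\rm T}M_0\mathcal A$ of Lemma \ref{linear algebraic problem}. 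Where you genuinely diverge is in the uniqueness argument: the paper parametrizes $\ker T$ explicitly as $\{\vec1\vec p^{\rm T}+\vec p\vec1^{\rm T}\}$ via the rank--nullity theorem and pins down the unique shift $\vec p$ with the Sherman--Morrison formula, whereas you avoid any kernel computation by using the splitting $\mathbb R^3=\mathrm{span}(\vec\omega)\oplus H$ together with symmetry (a symmetric $N$ that vanishes on $H\times H$ sends $H$ into $\mathrm{span}(\vec1)$, which is then incompatible with $N\vec\omega=\vec0$ because $\vec1^{\rm T}\vec\omega=1$). Your route is more coordinate-free and arguably more transparent; the paper's buys an explicit description of the whole equivalence class, which it reuses in Remark \ref{one-to-one correspondence under the conditions}. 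You also make explicit one step the paper leaves implicit: that two matrices yielding the same long range \emph{functional} must induce the same bilinear form on $H\times H$, which you justify by testing with $\vec u=\vec\omega+\vec w\,\psi$ and polarizing, using that $(-\Delta)^{-1}$ is positive definite on mean-zero functions. That is a worthwhile addition, with the minor caveat (which you can absorb by taking $\psi$ of small amplitude, since $\omega_i>0$) that the test fields must remain admissible.
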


\begin{proof}
Under the asssumption $u_1+u_2+u_3=1$, we have
$\vec u=[u_1,u_2,u_3]^{\rm T}=\mathcal A[u_1,u_2]^{\rm T}+[0,0,1]^{\rm T}$,
where $\mathcal A$ is given by \eqref{elimination by incompressibiblity}, and the second summand is constant. Since $\int_{\Omega}G(\vec x,\vec y)\dd{\vec x}=0$ for any $\vec y\in\Omega$, we can rewrite \eqref{long range term} as
\begin{equation*}
\int_{\Omega}\int_{\Omega}
\begin{bmatrix}
u_1(\vec x) & u_2(\vec x)
\end{bmatrix}
[\tilde\gamma_{ij}]
\begin{bmatrix}
u_1(\vec y)\\
u_2(\vec y)
\end{bmatrix}
G(\vec x,\vec y)\dd{\vec x}\dd{\vec y},
\end{equation*}
where $[\tilde\gamma_{ij}]=\mathcal A^{\rm T}[\gamma_{ij}]\mathcal A$. To ensure that the above integral is nonnegative, we need to impose the condition $[\tilde\gamma_{ij}]\succcurlyeq0$. (In fact, we can diagonalize $[\tilde\gamma_{ij}]$ into $Q^{\rm T}{\rm diag}(\lambda_1,\lambda_2)\,Q$, and rewrite the above integral as a quadratic form like \eqref{L^2 norm of electrostatic field}, from which it would be clear that $\lambda_1$ and $\lambda_2$ should be both nonnegative.)

By Lemma \ref{linear algebraic problem}, there is a class of equivalent choices of $[\gamma_{ij}]$, but only one of them satisfies $[\gamma_{ij}]\vec\omega=\vec0$. Such $[\gamma_{ij}]$ is given by \eqref{General matrix} and is positive semi-definite since we have $[\tilde\gamma_{ij}]\succcurlyeq0$.
\end{proof}

\begin{lemma}
\label{linear algebraic problem}
Define $T:S_3\rightarrow S_2$ to be $T(H)=\mathcal A^{\rm T}H\mathcal A$, where $S_m$ is the set of $m\times m$ symmetric real matrices, and
\begin{equation}
\label{elimination by incompressibiblity}
\mathcal A=
\begin{bmatrix}
1 &0\\
0 &1\\
-1 &-1
\end{bmatrix},
\end{equation}
then $T$ is surjective. The kernel of $T$ is $\big\{\vec1\vec p^{\rm T}+\vec p\vec1^{\rm T}\;\big|\;\vec p\in\mathbb R^3\big\}$. Given any $\vec w=[w_1,w_2,w_3]^{\rm T}\in\mathbb R^3$ with $\vec w^{\rm T}\vec 1=1$, in the quotient space $S_3/{\rm ker}(T)$, the equivalence class of any $H\in S_3$ has a unique representative $\tilde H$ satisfying $\tilde H\,\vec w=\vec 0$. This representative is given by $\tilde H=\mathcal B^{\rm T}T(H)\mathcal B$, where
\begin{equation}
\label{expand dimension}
\mathcal B=
\begin{bmatrix}
 1-w_1 & -w_1 & -w_1 \\
 -w_2 & 1-w_2 & -w_2
\end{bmatrix}.
\end{equation}
\end{lemma}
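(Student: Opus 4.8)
\emph{Proof sketch.}
The plan is to treat everything as elementary linear algebra on $S_3$ (dimension $6$) and $S_2$ (dimension $3$), built around the single identity $\mathcal A^{\rm T}\vec 1=\vec 0$ --- valid because each column of $\mathcal A$ has entries summing to zero --- together with the fact that $\mathcal A$ has full column rank $2$. I would first record two mechanical computations that get used repeatedly: $\mathcal B\mathcal A=I_2$ (a direct multiplication, valid for any $\vec w$), and $\mathcal A\mathcal B=I_3-\vec w\vec 1^{\rm T}$, where the hypothesis $\vec w^{\rm T}\vec 1=1$ enters in the third row. Surjectivity of $T$ is then immediate: given $M\in S_2$, the matrix $H:=\mathcal B^{\rm T}M\mathcal B\in S_3$ satisfies $T(H)=(\mathcal B\mathcal A)^{\rm T}M(\mathcal B\mathcal A)=M$.

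For the kernel, I would check that the linear map $\Phi:\vec p\mapsto\vec 1\vec p^{\rm T}+\vec p\vec 1^{\rm T}$ from $\mathbb R^3$ to $S_3$ is injective (the $(i,i)$ entry of $\Phi(\vec p)$ is $2p_i$, so $\Phi(\vec p)=0$ forces $\vec p=\vec 0$), hence its image is a $3$-dimensional subspace of $S_3$; and this image lies in ${\rm ker}(T)$ since $\mathcal A^{\rm T}(\vec 1\vec p^{\rm T}+\vec p\vec 1^{\rm T})\mathcal A=(\mathcal A^{\rm T}\vec 1)(\vec p^{\rm T}\mathcal A)+(\mathcal A^{\rm T}\vec p)(\vec 1^{\rm T}\mathcal A)=0$. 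Because $T$ is onto, rank--nullity gives $\dim{\rm ker}(T)=6-3=3$, so the inclusion ${\rm im}(\Phi)\subseteq{\rm ker}(T)$ is an equality, which is the asserted description of the kernel.

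For the final assertion, fix $\vec w$ with $\vec w^{\rm T}\vec 1=1$ and set $\tilde H:=\mathcal B^{\rm T}T(H)\mathcal B=(\mathcal A\mathcal B)^{\rm T}H(\mathcal A\mathcal B)$. Three points then need checking: $\tilde H\in S_3$ is symmetric (clear); $\tilde H$ lies in the coset of $H$, since $T(\tilde H)=(\mathcal B\mathcal A)^{\rm T}T(H)(\mathcal B\mathcal A)=T(H)$, whence $\tilde H-H\in{\rm ker}(T)$; and $\tilde H\vec w=\vec 0$, because $(\mathcal A\mathcal B)\vec w=(I_3-\vec w\vec 1^{\rm T})\vec w=\vec w-\vec w(\vec 1^{\rm T}\vec w)=\vec 0$. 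This produces a representative of the class annihilating $\vec w$, given by the stated formula. For uniqueness: if $H_1-H_2=\vec 1\vec p^{\rm T}+\vec p\vec 1^{\rm T}\in{\rm ker}(T)$ and $(H_1-H_2)\vec w=\vec 0$, then $(\vec p^{\rm T}\vec w)\vec 1+(\vec 1^{\rm T}\vec w)\vec p=\vec 0$, which by $\vec 1^{\rm T}\vec w=1$ becomes $\vec p=-(\vec p^{\rm T}\vec w)\vec 1$; taking the inner product with $\vec w$ then forces $\vec p^{\rm T}\vec w=-\vec p^{\rm T}\vec w$, so $\vec p=\vec 0$ and $H_1=H_2$.

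None of the individual steps is deep; the whole lemma is an exercise in conjugation by $\mathcal A$ and $\mathcal B$. The one place requiring genuine care is bookkeeping where the normalization $\vec w^{\rm T}\vec 1=1$ is invoked: it is exactly what turns $\mathcal A\mathcal B$ into the (idempotent, by $\mathcal B\mathcal A=I_2$) oblique projection $I_3-\vec w\vec 1^{\rm T}$ with kernel $\mathbb R\vec w$, and it is exactly what kills $\vec p$ in the uniqueness argument --- without it neither existence of $\tilde H$ as stated nor uniqueness holds --- so I would flag that hypothesis explicitly at each use.
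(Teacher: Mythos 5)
Your proposal is correct and follows essentially the same route as the paper: surjectivity via an explicit preimage, the kernel via $\mathcal A^{\rm T}\vec 1=\vec 0$ plus rank--nullity, and existence of the representative via the identities $\mathcal B\mathcal A=I_2$ and $\mathcal B\vec w=\vec 0$ (equivalently $\mathcal A\mathcal B\,\vec w=\vec 0$). The only cosmetic difference is that where the paper invokes the Sherman--Morrison formula to see that $I+\vec 1\vec w^{\rm T}$ is invertible, you unpack the same rank-one perturbation by a direct two-line computation, which is equally valid.
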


\begin{proof}
We can take $H=\begin{bmatrix}H_1 & 0\\0 & 0\end{bmatrix}$, where $H_1\in S_2$, then $T(H)=H_1$, so $T$ is surjective. Since $\vec1^{\rm T}\mathcal A=\vec0^{\rm T}$, based on the rank\textendash nullity theorem we know ${\rm ker}(T)=\big\{\vec1\vec p^{\rm T}+\vec p\vec1^{\rm T}\;\big|\;\vec p\in\mathbb R^3\big\}$. For any $H\in S_3$, we have
\begin{equation*}
(H+\vec1\vec p^{\rm T}+\vec p\vec1^{\rm T})\vec w=H\vec w+\vec1\vec w^{\rm T}\vec p+\vec p=H\vec w+(I+\vec1\vec w^{\rm T})\vec p,
\end{equation*}
where $I$ is an identity matrix. According to the Sherman\textendash Morrison formula, there is a unique $\vec p$ such that the above right-hand side vanishes. We can verify that $\tilde H=\mathcal B^{\rm T}T(H)\mathcal B$ is the corresponding representative within the equivalence class of $H$, by checking $\tilde H\vec w=\vec0$ and $T(\tilde H)=T(H)$, which are clear from $\mathcal B\vec w=\vec0$ and $\mathcal B\mathcal A=I$, respectively.
\end{proof}

\begin{remark}
The results in this section can be generalized from $\vec u\in\mathbb R^3$ to any dimension $\mathbb R^m$, by changing \eqref{elimination by incompressibiblity} into
\begin{equation*}
\mathcal A=\begin{bmatrix}
I_{m-1}\\
-\vec1^{\rm T}
\end{bmatrix},
\end{equation*}
and changing \eqref{expand dimension} into
\begin{equation*}
\mathcal B=
\begin{bmatrix}
I_{m-1} & \vec0
\end{bmatrix}
(I_{m}-\vec w\,\vec1^{\rm T}),
\end{equation*}
where $I_m$ is the $m\times m$ identity matrix, and $\vec1$ is a vector whose components are all 1.
\end{remark}

\end{document}